\documentclass[reqno,11pt]{amsart}
\usepackage{amsmath}
\usepackage{amssymb}
\usepackage{tikz}
\usepackage{datetime2}
\usetikzlibrary{shapes,snakes}

\addtolength{\oddsidemargin}{-.375in}
	\addtolength{\evensidemargin}{-.375in}
	\addtolength{\textwidth}{0.85in}

	\addtolength{\topmargin}{-.375in}
	\addtolength{\textheight}{0.85in}


\setcounter{tocdepth}{4}

\newtheorem{theorem}{Theorem}[section]
\newtheorem{lemma}[theorem]{Lemma}

\newtheorem{proposition}[theorem]{Proposition}
\newtheorem{definition}[theorem]{Definition}
\newtheorem{remark}[theorem]{Remark}

\newtheorem{corollary}[theorem]{Corollary}

\usepackage{amsmath}
\usepackage{amssymb}
\usepackage{amsfonts}
\usepackage{enumerate}
\usepackage{comment}
\usepackage{braket}

\usepackage{palatino}
\usepackage[T1]{fontenc}
\usepackage[mathscr]{eucal}

\usepackage{acronym}
\usepackage{latexsym}
\usepackage{paralist}
\usepackage{wasysym}
\usepackage{xspace}

\usepackage[font=small,labelfont=bf]{caption}
\usepackage{subfigure}
\usepackage{tikz}
\usetikzlibrary{calc,patterns}

\usepackage{hyperref}
\hypersetup{
colorlinks=true,
linktocpage=true,
pdfstartview=FitH,
breaklinks=true,
pdfpagemode=UseNone,
pageanchor=true,
pdfpagemode=UseOutlines,
plainpages=false,
bookmarksnumbered,
bookmarksopen=false,
bookmarksopenlevel=1,
hypertexnames=true,
pdfhighlight=/O,
urlcolor=blue!60!black,linkcolor=blue!70!black,citecolor=green!70!black, 
pdftitle={},
pdfauthor={},
pdfsubject={},
pdfkeywords={},
pdfcreator={pdfLaTeX},
pdfproducer={LaTeX with hyperref}
}


\numberwithin{equation}{section}  
\usepackage[sort&compress,capitalize,nameinlink]{cleveref}
\crefname{app}{Appendix}{Appendices}

\crefrangeformat{equation}{\upshape(#3#1#4)\textendash(#5#2#6)}

\usepackage[textwidth=30mm]{todonotes}

\usepackage{soul}
\setstcolor{red}
\sethlcolor{SkyBlue}





\newcommand{\w}{\mathbf{w}}

\newcommand{\si}{\sigma} 
\renewcommand{\l}{\lambda} 
 
\newcommand{\e}{\varepsilon} 
\renewcommand{\a}{\alpha}

\newcommand{\ent}{{\rm ENT} } 
\newcommand{\var}{{\rm Var} }

\newcommand{\ind}{\mathbf{1}}
\newcommand{\p}{\mathfrak{p}}

\newcommand{\bd}{\mathbf d}

\newcommand{\D}{\Delta}
\renewcommand{\d}{\delta}

\renewcommand{\t}{\tau}

\newcommand{\tv}{\texttt{TV}}
\newcommand{\tent}{T_\ent}
\newcommand{\muin}{\mu_{{\rm in}}}


\newcommand{\cC}{\ensuremath{\mathcal C}} 
\newcommand{\cD}{\ensuremath{\mathcal D}} 
\newcommand{\cE}{\ensuremath{\mathcal E}} 
\newcommand{\cF}{\ensuremath{\mathcal F}} 
\newcommand{\cG}{\ensuremath{\mathcal G}} 
 
\newcommand{\cI}{\ensuremath{\mathcal I}} 
\newcommand{\cJ}{\ensuremath{\mathcal J}} 
 
\newcommand{\cL}{\ensuremath{\mathcal L}} 
 
\newcommand{\cN}{\ensuremath{\mathcal N}} 
 
\newcommand{\cP}{\ensuremath{\mathcal P}}

\newcommand{\cS}{\ensuremath{\mathcal S}} 
\newcommand{\cT}{\ensuremath{\mathcal T}}

\newcommand{\cW}{\ensuremath{\mathcal W}} 
 
\newcommand{\cY}{\ensuremath{\mathcal Y}}


\newcommand{\bbE}{{\ensuremath{\mathbb E}} }

\newcommand{\bbN}{{\ensuremath{\mathbb N}} } 
 
\newcommand{\bbP}{{\ensuremath{\mathbb P}} }

\newcommand{\bu}{\mathbf{u}}
\newcommand{\Pan}{\mathbf{P}^{\rm an}} 
\newcommand{\Pj}{\mathbf{P}^{\rm J}} 
\newcommand{\E}{\ensuremath{\mathbb{E}}}

\renewcommand{\P}{\ensuremath{\mathbb{P}}}



\def\({\left(}
\def\){\right)}
\def\[{\left[}
\def\]{\right]}
%



\newacro{NE}{Nash equilibrium}
\newacroplural{NE}[NE]{Nash equilibria}
\newacro{PNE}{pure Nash equilibrium}
\newacroplural{PNE}[PNE]{Nash equilibria}
\newacro{PFNE}{prior-free Nash equilibrium}
\newacroplural{PFNE}[PFNE]{prior-free Nash equilibria}
\newacro{WE}{Wardrop equilibrium}
\newacroplural{WE}[WE]{Wardrop equilibria}
\newacro{SO}{socially optimum}

\newacro{KKT}{Karush\textendash Kuhn\textendash Tucker}
\newacro{OD}[O/D]{origin-destination}
\newacro{PoA}{price of anarchy}
\newacro{PoS}{price of stability}
\newacro{PoCS}{price of correlated stability}
\newacro{BPR}{bureau of public roads}
\newacro{FIP}{finite improvement property}

\newacro{BPG}{buck-passing game}
\newacro{SBPG}{stochastic buck-passing game}
\newacro{MBPG}{mixed extension of the buck-passing game}

\begin{document}
\title[Mixing time trichotomy in regenerating dynamic digraphs]{Mixing time trichotomy in regenerating dynamic digraphs}
\author[P.~Caputo]{Pietro Caputo$^{\#}$}
\address{$^{\#}$ Dipartimento di Matematica e Fisica, Universit\`a di Roma Tre, Largo S. Leonardo Murialdo 1, 00146 Roma, Italy.}
\email{caputo@mat.uniroma3.it}

\author[M.~Quattropani]{Matteo Quattropani$^{\flat}$}
\address{$^{\flat}$ Dipartimento di Matematica e Fisica, Universit\`a di Roma Tre, Largo S. Leonardo Murialdo 1, 00146 Roma, Italy.}
\email{matteo.quattropani@uniroma3.it}

%
\begin{abstract}
We study the convergence to stationarity for random walks on  dynamic random digraphs with given degree sequences. The digraphs undergo full regeneration at independent geometrically distributed random time intervals with parameter $\a$. Relaxation to stationarity is the result of a competition between regeneration and mixing on the static digraph. When the number of vertices $n$ tends to infinity and the parameter $\a$ tends to zero, we find three scenarios according to whether $\a\log n$ converges to zero, infinity or to some finite positive value: when the limit is zero, relaxation to stationarity occurs in two separate stages, the first due to  mixing on the static digraph, and the second due to regeneration; when the limit is infinite, there is not enough time for the static digraph to mix and the relaxation to stationarity is dictated by the regeneration only; finally, when the limit is a finite positive value we find a mixed behaviour interpolating between the two extremes.  A crucial ingredient of our analysis is the control of suitable approximations for the unknown stationary distribution. 
\end{abstract}

\keywords{Random digraphs, mixing time, random walks on networks, dynamic digraphs, cutoff.}

\maketitle             

%
%
\section{Introduction}
The analysis of stochastic processes on dynamic networks constitutes a fundamental theme of current and future research \cite{michail2018elements}. In this paper we are interested in the mixing time of random walks on dynamic graphs. In contrast with the case of static graphs, the theory for graphs evolving with time is far from being fully developed. 
Related problems have been considered in the literature on the so-called random walks in dynamic random environments; see e.g.\ \cite{boldrighini1997almost,dolgopyat2008random,bandyopadhyay2005random,avena2011law,figueiredo2012characterizing} and references therein. More closely linked to our questions here is the analysis of mixing times of random walks on dynamical percolation \cite{peres2015random,sousi2018cutoff,peres2019mixing}, and of random walks on evolving configuration models \cite{AveGulVDHdH:DirConf2016,AveGulVDHdH:DirConf2018}; see also  \cite{sauerwald2019random} for recent results on some related general problems. 
These works are all concerned with the case of undirected graphs, and as far as we know there has been essentially no analysis of mixing times for dynamically evolving directed graphs up to now. Even before discussing the mixing properties, a key problem in the directed setting is the identification of the stationary distribution. In this paper we resolve these difficulties and obtain a precise description of the mixing times for a class of digraphs undergoing a particularly simple evolution, namely for digraphs with given degree sequences that are fully regenerated at independent geometrically distributed random time intervals.

We shall consider two families of directed graphs. Both are obtained via the so-called configuration model, with the difference that in the first case we fix both in and out degrees, while in the second case we only fix the out degrees. The models are sparse in that the degrees are bounded. 

 \subsection{Directed configuration model} \label{sec:mod1}
 Let $V$ be a set of $n$ vertices. For simplicity we often write $V=[n]$, with $[n]=\{1,\dots,n\}$.
 For each $n$, we have two finite sequences $\bd^+=(d_x^+)_{x\in[n]}$ and $\bd^-=(d_x^-)_{x\in[n]}$ of non negative integers such that 
 \begin{equation}\label{eq:egs}
 m=\sum_{x\in V}d_x^+=\sum_{x\in V}d_x^-.
 \end{equation} 
 In the {\em directed configuration model} DCM($\bd^\pm$), a random graph $G$ is obtained as follows: 1) equip each node $x$ with a set $E_x^+$ of $d_x^+$ tails and a set $E_x^-$ of $d_x^-$ heads; 2) pick uniformly at random one of the $m!$ bijections from the set of all tails $\cup_x E_x^+$ into the set of all heads $\cup_x E_x^-$, call it $\sigma$; 3) for all $x,y\in V$, add a directed edge $(x,y)$ every time a tail from $x$ is mapped into a head from $y$ through $\sigma$. We call $\cC$ the set of all bijections $\si$, so that $|\cC|=m!$. The resulting graph $G=G(\si)$ may have self-loops and multiple edges, however it is classical that by conditioning on the event that there are no multiple edges and no self-loops one obtains a uniformly random simple digraph with in degree sequence $\bd^-$ and out degree sequence $\bd^+$.  
 
Structural properties of random graphs obtained in this way have been studied in  ~\cite{CooFri:SizeLargSCRandDigr2002}. Here we shall consider the sparse case corresponding to bounded degree sequences, and in order to avoid non irreducibility issues, we shall assume that all degrees are at least $2$. Thus, throughout this work it will always be assumed that 
 \begin{equation}\label{degs}
\min_{x\in[n]}d_x^\pm\ge 2,\qquad\max_{x\in[n]}d_x^\pm=O(1). 
\end{equation}
We often use the notation $\D=\max_{x\in[n]}\max\{d_x^-, d_x^+\}$. Under the first assumption  it is known that DCM($\bd^\pm$) is strongly connected with high probability, while under the second assumption, it is known that DCM($\bd^\pm$) has a uniformly (in $n$) positive probability of having no self-loops nor multiple edges. In particular, any property that holds with high probability for DCM($\bd^\pm$) will also hold with high probability for a uniformly chosen simple graph subject to the constraint that in and out degrees be given by $\bd^-$ and $\bd^+$ respectively. 
Here and throughout the rest of the paper we say that a property holds {\em with high probability} (w.h.p.\ for short) if the probability of the corresponding event converges to $1$ as $n\to\infty$. 

\subsection{Out configuration model} \label{sec:mod2}
To define the second model, for each $n$ let $\bd^+=(d_x^+)_{x\in[n]}$ be a finite sequence of non negative integers. In the {\em out-configuration model} OCM($\bd^+$) a random graph $G$ is obtained as follows: 1) equip each node $x$ with $d_x^+$ tails; 2)  pick, for every $x$ independently, a uniformly random injective map $\sigma_x$  
from  the set of tails at $x$ to the set of all vertices $V$; 3) for all $x,y\in V$, add a directed edge $(x,y)$ if a tail from $x$ is mapped into $y$ through $\sigma_x$.
Equivalently, $G$ is the graph whose adjacency matrix is uniformly random in the set of all $n\times n$ matrices with entries $0$ or $1$ such that every row $x$ sums to $d_x^+$. Notice that $G$ may have self-loops.
We write $\sigma=(\sigma_x)_{x\in [n]}$, and let $\cC$ denote the set of all distinct such maps, so that $|\cC|=\prod_{x=1}^n\frac{n!}{(n-d_x^+)!}$. As above we make the assumptions  \begin{equation}\label{degs+}
\min_{x\in[n]}d_x^+\ge 2,\qquad\max_{x\in[n]} d_x^+=O(1), 
\end{equation}
and use the notation $\D=\max_{x\in[n]} d_x^+$. 

\subsection{Mixing of static digraphs}
In what follows $\si\in\cC$ denotes a given realisation of either the directed configuration model  DCM($\bd^\pm$) or the out-configuration model OCM($\bd^+$), and we write $G(\si)$ for the corresponding realisation of the digraph. We will treat both models on an equal footing as much as possible,   and when we need to distinguish between them
we often refer to these as {\em model 1} and {\em model 2} respectively.   
When the underlying graph is static and is given by one of these two models, the mixing time of the random walk has been studied in \cite{BCS1,BCS2}. For a fixed configuration $\si\in\cC$, we consider the transition matrix
\begin{equation}\label{eq:pisi}
P_\si(x,y)=\frac{\#(\si; x\to y)}{d_x^+},\qquad x,y\in[n],
\end{equation}
where $\#(\si; x\to y)$ denotes the number of directed edges from $x$ to $y$ in $G(\si)$. The random walk on $G(\si)$ is thus the Markov chain $(X_0,X_1,\dots)$ with state space $[n]$ and with transition probabilities $P_\si(x,y)$. We use the notation   $\mathbf{P}^\si_x(\cdot)$ for the law of the trajectory $(X_0,X_1,\dots)$ when $X_0=x$, so that in particular, for any $x,y\in[n]$, and $t\in\bbN$:
\begin{equation}\label{eq:pisi2}
\mathbf{P}^\si_x(X_t=y)= P_\si^t(x,y).
\end{equation}
We remark that in each of the two models above, the random walk on the digraph $G=G(\si)$ has with high probability a unique stationary distribution $\pi_\si$. In model 1 this follows from the fact that $G(\si)$ is w.h.p.\ strongly connected \cite{CooFri:SizeLargSCRandDigr2002}. In model 2 on the other hand $G(\si)$ may have vertices with in-degree zero (or, more generally, with a bounded in-neighborhood) and one cannot conclude that $G(\si)$ is strongly connected.  However, it is still the case that w.h.p.\ there exists a unique 
stationary distribution; see e.g.\ \cite{AbBP,BCS2} for more details. Let us now recall the main results of \cite{BCS1,BCS2}. 
Convergence to equilibrium will be quantified using the total variation distance. Given two probability measures $\mu,\nu$, the latter is defined by
\begin{equation}\label{def:tv}
\|\mu-\nu\|_\tv = \max_E |\mu(E)-\nu(E)|,
\end{equation}
where the maximum ranges over all possible events in the underlying probability space. Let the {\em entropy} $H$ and the associated {\em entropic time} $\tent$ be defined by 
\begin{equation}\label{def:tent}
H=\sum_{x\in V}\muin(x)\log d^+_x, \;\qquad \tent = \frac{\log n}H,
\end{equation}
where the probability $\muin$ is defined as $\muin(x)=d_x^-/m$ for model 1 and as $\muin(x)=1/n$ for model 2. 
Note that our assumptions on $\bd^\pm$ imply that the deterministic quantities $H,\tent$ satisfy $H=\Theta(1)$ and $\tent = \Theta(\log n)$. 
\begin{theorem}[Uniform cutoff at the entropic time \cite{BCS1,BCS2}]\label{th:BCS}
Let $G(\si)$ be a random graph from either
the directed configuration model  DCM($\bd^\pm$) or the out-configuration model 
OCM($\bd^+$).
For each $\beta>0, \beta\neq 1$ one has:  
\begin{equation}\label{eq:cutoffo}
\max_{x\in [n]}\left| \|P_\si^{t}(x,\cdot)-\pi_\si\|_\tv-  \vartheta(\beta)
\right|\overset{\P}{\longrightarrow}0,\qquad t=\lfloor \beta\tent\rfloor,
\end{equation}
where $\vartheta(\beta)$ is the step function 
$\vartheta(\beta)=\ind(\beta< 1)$. 
\end{theorem}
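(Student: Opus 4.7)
The plan is to follow the entropic/annealed framework developed in \cite{BCS1,BCS2}, where Theorem~\ref{th:BCS} is originally established. The structural fact underlying everything is that, for both DCM($\bd^\pm$) and OCM($\bd^+$) with bounded degrees, the digraph $G(\si)$ is locally tree-like w.h.p., which permits one to couple the walk's trajectory with an exploration on an annealed graph where the matching $\si$ is revealed only when queried. On this annealed structure, the out-degree sequence $(d^+_{X_s})_{s\ge 0}$ behaves, up to times $o(\sqrt n)$, essentially as an i.i.d.\ sequence drawn from $\muin$: each step queries a uniformly random unmatched head (model 1) or a uniformly random vertex (model 2). It follows that the random entropy $Z_t=\sum_{s<t}\log d^+_{X_s}$ concentrates around $tH$, and this concentration is the driving force of the cutoff.

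For the lower bound in the regime $\beta<1$, fix $t=\lfloor\beta\tent\rfloor$. The number of directed paths of length $t$ issued from $x$ is at most $e^{Z_t}$, which concentrates around $e^{tH}=n^{\beta}$. Hence there exists a random set $S_x\subset[n]$ of cardinality $n^{\beta}(1+o(1))$ supporting almost all the mass of $P_\si^t(x,\cdot)$. A complementary concentration estimate yields $\pi_\si(y)=O(1/n)$ on typical vertices $y$, so that $\pi_\si(S_x)\le n^{\beta}\cdot O(1/n)=o(1)$, and therefore $\|P_\si^t(x,\cdot)-\pi_\si\|_\tv\ge 1-o(1)$ uniformly in $x$. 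For the upper bound in the regime $\beta>1$, the annealed exploration produces $e^{Z_t}\gg n$ distinct paths w.h.p.\ A second moment / collision argument then shows that $X_t$ is close in total variation to the annealed target distribution (essentially $\muin$ in model 1, and close to the uniform measure in model 2). Comparing this target to $\pi_\si$ via an independent concentration estimate for the stationary distribution yields $\|P_\si^t(x,\cdot)-\pi_\si\|_\tv\to 0$ uniformly in $x$.

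The main obstacle is the identification and control of $\pi_\si$, which is a random measure with no closed-form expression. In model 2 in particular, some vertices may have atypically small in-neighborhoods, preventing uniform pointwise bounds on $\pi_\si$; one therefore needs separate estimates showing that such pathological vertices form a negligible set, both in $\pi_\si$-mass and in terms of the walk's occupation at time $t\gg\tent$. A second subtlety is promoting pointwise-in-$x$ convergence to the uniform-in-$x$ statement \eqref{eq:cutoffo}, which requires entropy concentration tails that are exponential in $t$ and uniform over starting vertices, together with a union bound ruling out an exceptional set of bad initial points. The exclusion $\beta\neq 1$ is natural since the behaviour inside the cutoff window at $\beta=1$ depends on finer fluctuations of $Z_t$ around $tH$ that are not captured by the leading-order entropic scaling.
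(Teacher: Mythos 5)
This theorem is quoted by the paper from \cite{BCS1,BCS2} and is not re-proved here, so there is no in-paper proof to compare against. That said, your outline is a reasonable account of the strategy of those references, and it also mirrors the paper's own adaptation of that strategy for the double-environment cutoff (Theorem~\ref{th:2cutoff} and Section~\ref{sec:2cutoff}): local tree-likeness, annealed/delayed-revelation exploration, and entropy concentration $Z_t\approx tH$ are indeed the driving mechanisms.

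Two places where your sketch is loose relative to what is actually done. For the lower bound you assert $\pi_\si(y)=O(1/n)$ on typical vertices; pointwise control of this strength is in general false (there are vertices carrying $\pi_\si$-mass of order $n^{-1}$ times a polylogarithmic factor). What the argument really uses is an $\ell^2$ statement, $\sum_y\pi_\si(y)^2=O(1/n)$ w.h.p.\ (compare Lemma~\ref{le:widespread-pi} and the Cauchy--Schwarz step in \eqref{eq:csc}), from which $\pi_\si(S_x)=o(1)$ for any set of size $n^{\beta}$ with $\beta<1$ follows. For the upper bound, describing it as a ``second moment / collision argument'' does not quite capture the method: the proof in \cite{BCS1,BCS2} decomposes $P_\si^t(x,\cdot)$ over a class of \emph{nice paths}, shows via a Freedman/Bennett martingale inequality that the walk started from a tree-like vertex stays on the explored tree w.h.p., and then applies Chatterjee's concentration inequality for random permutations to bound the nice-path mass at each target $y$ by $(1+\varepsilon)\muin P_\si^h(y)+\varepsilon/n$, finally replacing $\muin P_\si^h$ by $\pi_\si$ (as in Lemma~\ref{le:approx}). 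Your heuristic that $e^{Z_t}\gg n$ distinct paths exist is the correct intuition for why mixing should occur at $\tent$, but it is not what the proof directly controls, and one cannot literally count distinct paths (collisions do occur); the weight-concentration and permutation-concentration inequalities are the substance of the upper bound. With these caveats your sketch is in the right spirit, and the remark about $\beta=1$ being excluded because of the finer cutoff-window fluctuations is accurate.
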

In \eqref{eq:cutoffo} we use the notation $\overset{\P}{\longrightarrow}$ for convergence in probability as $n\to\infty$ with respect to the random choice of the configuration $\si\in\cC$.
If we define the mixing time $T_\si(\e)$ as 
the first $t\in\bbN$ such that $ \max_{x\in [n]}\|P_\si^{t}(x,\cdot)-\pi_\si\|_\tv\leq \e$, then Theorem \ref{th:BCS} establishes 
that the mixing time of the random walk on the static digraph satisfies with high probability $T_\si(\e)=(1+o(1))\tent$, for any fixed $\e\in(0,1)$.  This is an instance of the so-called cutoff phenomenon \cite{Dcutoff}. We refer to \cite{LS,ben2017cutoff,berestycki2018random} for related results in the context of undirected graphs. 

Another important fact established in \cite{BCS1,BCS2} is that relaxation to equilibrium occurs much earlier than  the mixing time if one starts from a delocalized initial state. See also \cite{SC} for a related result in terms of the spectrum of the matrix $P_\si$. The precise version of this fact that we shall need reads as follows. Call {\em widespread} a sequence of probability measures $\l=\l_n$ on $[n]$ such that for some $\e>0$:
\begin{equation}\label{eq:widesp}
\max_{x\in [n]}\l(x) \leq n^{-\frac12-\e}\,,\quad \text{and}\quad \limsup_{n\to\infty} \sum_{x\in[n]}n\lambda_n(x)^2<\infty.
\end{equation}
For any probability measure $\l$ on $[n]$ we write $\l P^t_\si$ for the probability $\sum_x \l(x)P^t_\si(x,y)$. The following result is proven in \cite[Lemma 5]{CQ:PageRank}. 
 \begin{lemma}\label{le:approx}
Let $G(\si)$ be a random graph from either
the directed configuration model  DCM($\bd^\pm$) or the out-configuration model 
OCM($\bd^+$). If $\l$ is widespread, then for any sequence $t=t(n)\to\infty$, 
\begin{equation}\label{eq:wsapprox}
\left\|\l P_\si^t - \pi_\si\right\|_{\tv}\overset{\P}{\longrightarrow} 0.
		\end{equation}
\end{lemma}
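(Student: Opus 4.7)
My plan is to reduce the total-variation bound to an annealed $L^2$ estimate, and then control the resulting second moment by exploiting the local tree-like structure of DCM/OCM.

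Using stationarity $\pi_\sigma = \pi_\sigma P_\sigma^t$ together with Cauchy--Schwarz,
\begin{equation*}
\|\lambda P_\sigma^t - \pi_\sigma\|_\tv = \|(\lambda - \pi_\sigma) P_\sigma^t\|_\tv \leq \tfrac12 \sqrt{n}\,\|(\lambda - \pi_\sigma) P_\sigma^t\|_2,
\end{equation*}
so it suffices to prove $\|(\lambda - \pi_\sigma) P_\sigma^t\|_2 = o(n^{-1/2})$ in $\P$-probability. I would first establish (or invoke from \cite{BCS1,BCS2}) that $\pi_\sigma$ is itself widespread w.h.p., typically via an approximation of the form $\pi_\sigma \approx \muin P_\sigma^s$ for some $s\to\infty$ with $s=o(\tent)$, which carries the widespread property of $\muin$ forward through the walk. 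Setting $\nu:=\lambda-\pi_\sigma$, one then has $\nu(V)=0$, $\|\nu\|_\infty = O(n^{-1/2-\varepsilon})$, and $\|\nu\|_2^2 = O(1/n)$ w.h.p.

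The core of the argument is the annealed second-moment estimate
\begin{equation*}
\bbE_\sigma\bigl[\|\nu P_\sigma^t\|_2^2\bigr] = \sum_{y\in[n]}\sum_{x,x'\in[n]} \nu(x)\,\nu(x')\,\bbE_\sigma\bigl[P_\sigma^t(x,y)\,P_\sigma^t(x',y)\bigr],
\end{equation*}
obtained from a standard two-walk exploration of the configuration model. For typical pairs $(x,x')$ the two walks from $x,x'$ explore essentially disjoint tree-like neighborhoods up to time $t$ (one can always reduce to the regime $t \ll \log n$ via the contraction property $\|\mu P_\sigma - \nu P_\sigma\|_\tv \le \|\mu-\nu\|_\tv$), so the joint expectation factors up to a small collision correction. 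Summed against $\nu(x)\nu(x')$, the factored main term vanishes because $\nu(V)=0$, while the remaining correction is absorbed using the pointwise and $L^2$ bounds on $\nu$. A Markov inequality then yields the required convergence in $\P$-probability.

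The step I expect to be the main obstacle is the quantitative control of the collision correction, namely bounding the expected overlap between two walks of length $t$ in the annealed exploration, uniformly in $x,x'$. This is a delicate but fairly standard annealed computation, of the same flavor as those developed in \cite{BCS1,BCS2,CQ:PageRank}, and the widespread pointwise bound $\|\nu\|_\infty = O(n^{-1/2-\varepsilon})$ plays the essential role of absorbing the contribution of the exceptional pairs $(x,x')$ whose walks happen to overlap substantially on the random digraph.
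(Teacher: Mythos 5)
The paper does not prove this lemma itself: it is cited from \cite[Lemma 5]{CQ:PageRank}, so there is no in-paper proof to compare against. That said, your proposal has two genuine problems that would prevent it from going through as written.

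First, the annealed second-moment identity
\begin{equation*}
\bbE_\sigma\bigl[\|\nu P_\sigma^t\|_2^2\bigr]
= \sum_{y\in[n]}\sum_{x,x'\in[n]} \nu(x)\,\nu(x')\,\bbE_\sigma\bigl[P_\sigma^t(x,y)\,P_\sigma^t(x',y)\bigr]
\end{equation*}
is only valid if $\nu$ is deterministic, but you have set $\nu=\lambda-\pi_\sigma$ and $\pi_\sigma$ depends on $\sigma$. You cannot pull $\nu(x)\nu(x')$ out of the expectation; the two-walk exploration does not decouple from $\pi_\sigma$. This is not a cosmetic issue, because the cancellation $\nu(V)=0$ that you rely on to kill the factored main term only makes sense once $\nu$ is outside the expectation. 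Second, your proposed route to proving that $\pi_\sigma$ is widespread, via an approximation of the form $\pi_\sigma\approx\muin P_\sigma^s$ for some $s\to\infty$ with $s=o(\tent)$, is precisely the $\lambda=\muin$ instance of the very lemma you are proving, so the argument is circular. The paper establishes the widespread property of $\pi_\sigma$ by a different route (Lemma \ref{le:widespread-pi}): it conditions on the event $\cD$ from \eqref{eq:def-D} that $P_\sigma^t(x,\cdot)$ is $o(n^{-3})$-close to $\pi_\sigma$ uniformly in $x$ for $t=\log^3 n$, which sits safely \emph{above} the entropic time and therefore invokes only the static cutoff (Theorem \ref{th:BCS}), not the result being proved.

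The $L^2$/annealed idea is the right ingredient, but it must be applied to a \emph{deterministic} signed measure. A workable restructuring is: set $\nu=\lambda-\muin$ (deterministic, zero total mass, widespread), prove an annealed second-moment bound comparing $\lambda P_\sigma^t$ to $\muin P_\sigma^T$ for $T=(1+\e)\tent$ (two walks of \emph{different} lengths, which the branching/exploration argument handles), and then invoke Theorem \ref{th:BCS} to replace $\muin P_\sigma^T$ by $\pi_\sigma$. This avoids both issues. As it stands, your sketch would not compile into a correct proof without this restructuring.
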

As we shall see in Corollary \ref{co:widespread-pi} below, the probability $\pi_\si$ is itself widespread with high probability. 

\subsection{Mixing of dynamic digraphs}
We now introduce the joint evolution of the digraph and the random walk. Given $\a\in(0,1)$, we consider the Markov chain with state space $\cC\times [n]$ and
with transition matrix 
\begin{equation}\label{eq:transition}
\cP_\alpha((\sigma,x),(\eta,y))=(1-\alpha)P_\sigma(x,y)\ind_{\sigma}(\eta)+\alpha \,\bu(\eta)\ind_{x}(y),		
\end{equation}
where $\ind_a(b)$ stands for $1$ if $a=b$ and $0$ otherwise and $\bu(\eta)=|\cC|^{-1}$ denotes the uniform distribution over the set $\cC$. In words, at each time $t\in \bbN$ independently, we sample a Bernoulli$(\a)$ random variable $J_t$; if $J_t=1$ we pick a uniformly random  $\eta\in\cC$ and move from the current state $(\si,x)$ to the new state $(\eta,x)$, while if $J_t=0$ we move to the new state $(\si,y)$ where $y$ is chosen uniformly at random among the out-neighbours of $x$ in the digraph $G(\si)$.   We write $\{(\xi_t,X_t),\,t\geq 0\}$ for the trajectory of the Markov chain and write $\Pj_{\si,x}(\cdot)$ for its law when started at $\xi_0=\si$ and $X_0=x$. 
It is not hard to check that this is 
an irreducible and aperiodic Markov chain and therefore it admits a unique stationary distribution $\pi^{\rm J}$. 
A consequence of our results, see Remark \ref{rem:stat} below, is that $\pi^{\rm J}$ is well approximated in total variation distance by the probability measure $\nu$ on $\cC\times [n]$ defined by
\begin{equation}\label{eq:def-nu}
\nu(\sigma,x)=\bu(\sigma)\pi_\sigma(x).
\end{equation}
We know that $\pi_\si$ is uniquely defined for all $\si$ in a set $\Omega_n\subset \cC$ with $\bu(\Omega_n)\to 1$ as $n\to\infty$. To extend $\nu$ to all $  \cC\times [n]$ we may define e.g.\ $\pi_\si=\muin$ for $\si\in\cC\setminus \Omega_n$.  
We define 
\begin{equation}\label{eq:def-daj}
\cD_{\sigma,x}^{{\rm J},\alpha}(t)=\| \Pj_{\sigma,x}(\xi_t=\cdot,X_t=\cdot)-\nu\|_{\tv}.
\end{equation}
For each $t\in\bbN$, the quantity $\cD_{\sigma,x}^{{\rm J},\alpha}(t)$ is regarded as a random variable with respect to the uniform choice of the configuration $\si\in\cC$.  
Moreover, we extend  $\cD_{\sigma,x}^{{\rm J},\alpha}(\cdot)$ to all positive reals by taking the integer part of the argument.  

\begin{theorem}\label{th:trichotomy-joint}
Fix a sequence $\alpha=\a_n$  such that $\a_n\to 0$ as $n\to\infty$. Then, for all $\beta>0$
\begin{equation}\label{eq:up-daj}
\limsup_{n\to\infty}\max_{\si\in\cC,x\in[n]}\cD_{\sigma,x}^{{\rm J},\alpha}(\beta\a^{-1})\leq (1+\beta)e^{-\beta}. 
\end{equation}
Next, assume  that 
\begin{equation}\label{eq:def-ga}
\gamma=\lim_{n\to\infty}\alpha\, T_\ent\in [0,\infty].
\end{equation}
Then, according to the value of $\gamma$ there are three scenarios:
%
\begin{enumerate}
	\item 
	If $\gamma=0$ then for all $\beta>0$:
	\begin{equation}\label{scena1}
	\max_{x\in[n]}\left| \cD_{\sigma,x}^{{\rm J},\alpha}(\beta\a^{-1})-e^{-\beta}\right|\overset{\P}{\longrightarrow}0.
		\end{equation}

	\item If $\gamma=\infty$ then for all $\beta>0$:
	\begin{equation}\label{scena2}
	\max_{x\in[n]}\left| \cD_{\sigma,x}^{{\rm J},\alpha}(\beta\a^{-1})-(1+\beta)e^{-\beta}\right|\overset{\P}{\longrightarrow}0.
		\end{equation}
	\item If $\gamma\in(0,\infty)$ then for all $\beta>0$, $\beta\not=\gamma$:
		\begin{equation}\label{scena3}
		\max_{x\in[n]}\left| \cD_{\sigma,x}^{{\rm J},\alpha}(\beta\a^{-1})-\psi_\gamma(\beta)\right|\overset{\P}{\longrightarrow}0.
		\end{equation}
	where
	$$\psi_\gamma(\beta)=\begin{cases}
	(1+\beta)e^{-\beta}&\text{if }\beta<\gamma\\
	e^{-\beta}&\text{if }\beta>\gamma
	\end{cases}.$$
\end{enumerate}
\end{theorem}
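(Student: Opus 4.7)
The plan is to decompose the joint law according to the number $N_t$ of regenerations in $\{1,\dots,t\}$, where $t=\lfloor\beta/\alpha\rfloor$. Since $N_t\sim\mathrm{Bin}(t,\alpha)$ converges in distribution to $\mathrm{Poisson}(\beta)$ and, on $\{N_t\geq 1\}$, the last regenerated graph $\eta_{N_t}=\xi_t$ is a fresh uniform sample from $\cC$ independent of the prior history, one obtains the exact decomposition
\[
\Pj_{\sigma,x}(\xi_t=\eta,X_t=y)=(1-\alpha)^t\,\ind_\sigma(\eta)\,P_\sigma^t(x,y)+\bigl(1-(1-\alpha)^t\bigr)\,\bu(\eta)\,q_t(\eta,y),
\]
where $q_t(\eta,y):=\Pj_{\sigma,x}(X_t=y\mid\xi_t=\eta,N_t\geq 1)$. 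Comparing with $\nu(\eta,y)=\bu(\eta)\pi_\eta(y)$ and using that $\bu(\sigma)=|\cC|^{-1}=o(1)$, a short triangle inequality then yields
\[
\cD_{\sigma,x}^{{\rm J},\alpha}(t)=(1-\alpha)^t+\bigl(1-(1-\alpha)^t\bigr)\,D_t+o(1),\qquad D_t:=\bbE_{\eta\sim\bu}\|q_t(\eta,\cdot)-\pi_\eta\|_\tv.
\]
The first summand converges to $e^{-\beta}$ and reflects the atomic mass at $\{\xi_t=\sigma\}$ that $\nu$ fails to charge; the whole trichotomy therefore reduces to identifying the limit of $D_t$.

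I would further decompose $D_t=\sum_{k\geq 1}\bbP(N_t=k\mid N_t\geq 1)\,D_t^{(k)}$ with $D_t^{(k)}:=\bbE_\eta\|q_t^{(k)}(\eta,\cdot)-\pi_\eta\|_\tv$, and show that the contribution from $\{N_t\geq 2\}$ is universally negligible. For $k\geq 2$, the algebraic identity $\bbE_\eta[P_\eta(z,\cdot)]=\muin$, valid for every $z\in[n]$ in both DCM and OCM, implies that after marginalising a single walk step on any fresh uniform intermediate graph $\eta_j$ (with $j<k$) the law of the walker becomes exactly $\muin$, which is widespread. Since inter-regeneration gaps are asymptotically geometric with mean $1/\alpha\to\infty$, the event that every gap $\tau_{j+1}-\tau_j$ for $j<k$ equals $1$ has probability $o(1)$ on $\{N_t\geq 2\}$; on the complement one has $q_t^{(k)}(\eta,\cdot)=\muin\,P_\eta^{L_t}+o(1)$ in total variation, with $L_t=t-\tau_{N_t}\to\infty$ in probability, and Lemma~\ref{le:approx} applied to the widespread measure $\muin$ gives $D_t^{(k)}\to 0$ uniformly in $k\geq 2$. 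Combined with the trivial bound $D_t^{(1)}\leq 1$, this already yields the universal upper bound~\eqref{eq:up-daj}, since $\bbP(N_t=1\mid N_t\geq 1)\to\beta e^{-\beta}/(1-e^{-\beta})$.

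The whole trichotomy is therefore carried by the single-regeneration term
\[
q_t^{(1)}(\eta,\cdot)=\bbE_{\tau_1}\bigl[P_\sigma^{\tau_1-1}(x,\cdot)\,P_\eta^{t-\tau_1}\bigr],
\]
with $\tau_1$ asymptotically uniform on $\{1,\dots,t\}$: this describes a walk first on $\sigma$ and then on $\eta$ from the initial vertex $x$, unaided by any intermediate averaging. When $\gamma=0$ one has $t\gg\tent$, so for typical $\tau_1$ the walk on $\sigma$ has mixed to the widespread limit $\pi_\sigma$ by Theorem~\ref{th:BCS} and Corollary~\ref{co:widespread-pi}, after which Lemma~\ref{le:approx} yields $q_t^{(1)}(\eta,\cdot)\to\pi_\eta$ in total variation; hence $D_t^{(1)}\to 0$ and $\cD_{\sigma,x}^{{\rm J},\alpha}(t)\to e^{-\beta}$. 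When $\gamma=\infty$ one has $t\ll\tent$: the support of $q_t^{(1)}(\eta,\cdot)$ is bounded by $t\Delta^{t-1}=n^{o(1)}$, while the widespread bound $\max_y\pi_\eta(y)\leq n^{-1/2-\varepsilon}$ forces $\pi_\eta$ to put mass $o(1)$ on any such set; hence $D_t^{(1)}\to 1$ and $\cD_{\sigma,x}^{{\rm J},\alpha}(t)\to(1+\beta)e^{-\beta}$. When $\gamma\in(0,\infty)$ the same two arguments apply respectively for $\beta>\gamma$ (total walk budget exceeds $\tent$) and $\beta<\gamma$ (total walk budget too short), yielding the step function $\psi_\gamma$.

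The hard part will be the rigorous $\{N_t\geq 2\}$ step: the identity $q_t^{(k)}\approx\muin\,P_\eta^{L_t}$ is exact only after averaging the intermediate $\eta_j$'s, so turning it into a quenched TV bound requires quantitative second-moment control on the fluctuations of the conditional law around this annealed target, uniform in the random $\tau_j$'s and in typical $\sigma\in\cC$. This is precisely where the careful approximations of the unknown stationary distribution $\pi_\sigma$ mentioned in the abstract should play a central role.
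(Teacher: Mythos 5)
Your top-level plan --- decompose by the number $N_t$ of regenerations, use Lemma~\ref{le:general} (the annealed marginal is $\muin$) plus Lemma~\ref{le:approx} to kill the $\{N_t\geq 2\}$ contribution, and reduce everything to the single-regeneration term --- is the same plan as the paper's, which decomposes according to the last regeneration time and shows the ``two-or-more'' piece $\widehat{C}_t^{\si,x}$ converges to $\pi_\eta$ via exactly these two lemmas. The differences are in the details, and two of them are genuine gaps.

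First, a smaller issue: your formula $\cD_{\sigma,x}^{{\rm J},\alpha}(t)=(1-\alpha)^t+\bigl(1-(1-\alpha)^t\bigr)D_t+o(1)$ is stated as an equality, but a triangle inequality only gives $\leq$. Writing $p=1-(1-\alpha)^t$, one has $\tfrac12\sum_y|pq_t(y)-\pi_\eta(y)|=\tfrac12(1-p)+\max_A\bigl[pq_t(A)-\pi_\eta(A)\bigr]$, and $\max_A[pq_t(A)-\pi_\eta(A)]\leq p\|q_t-\pi_\eta\|_\tv$, with equality only when the maximising set $A^*$ for $q_t-\pi_\eta$ carries vanishing $\pi_\eta$-mass. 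You do eventually use the near-singularity of $q_t^{(1)}$, so this can be repaired, but as written the equality is unjustified and the matching lower bound does not follow.

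Second, and more seriously, the single-regeneration analysis does not cover the scenario $\gamma\in(0,\infty)$. For $\beta<\gamma$ you invoke the crude support bound $t\Delta^{t-1}=n^{o(1)}$, which is correct when $\gamma=\infty$ (then $t=o(\log n)$) but fails when $t=(\beta/\gamma)\tent=\Theta(\log n)$: there $\Delta^t=n^{(\beta/\gamma)\log\Delta/H\,(1+o(1))}$, which can exceed $n$ whenever the degree sequence is non-constant (so that $\log\Delta>H$). The paper replaces this with the weight-concentration statement of Lemma~\ref{le:lln} (the lower-bound part of Theorem~\ref{th:2cutoff}), which bounds the size of the set carrying almost all of the mass of $Q^{s,t}_{\si,\eta}(x,\cdot)$ by $e^{(1+\varepsilon)Ht}=n^{1-\varepsilon^2}$, a bound that does \emph{not} degrade with the spread of the degree sequence. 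For $\gamma<\beta<2\gamma$, the two simple arguments (``walk mixes in $\sigma$ then short residual in $\eta$'' and ``short start in $\sigma$ then walk mixes in $\eta$'') leave a non-degenerate window $\tau_1\in(t-\tent,\tent)$ of single-regeneration times where neither environment alone is given $\tent$ steps; this is precisely the window that requires the double-digraph cutoff, Theorem~\ref{th:2cutoff}, which is the technical core of the paper and which your sketch neither proves nor identifies as necessary. Without it, the proposal only handles the two extreme values of $\gamma$.
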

\begin{figure}
	\centering
	\includegraphics[width=4.2cm]{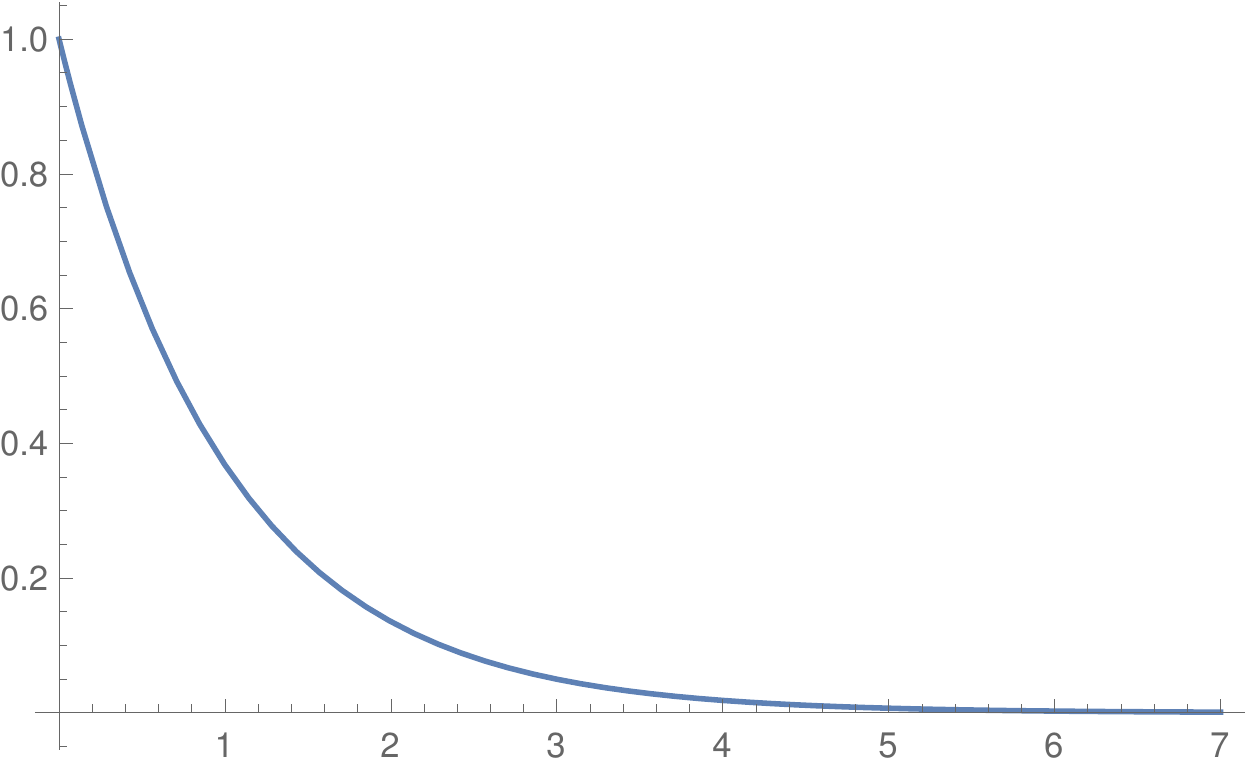}\qquad 
	\includegraphics[width=4.2cm]{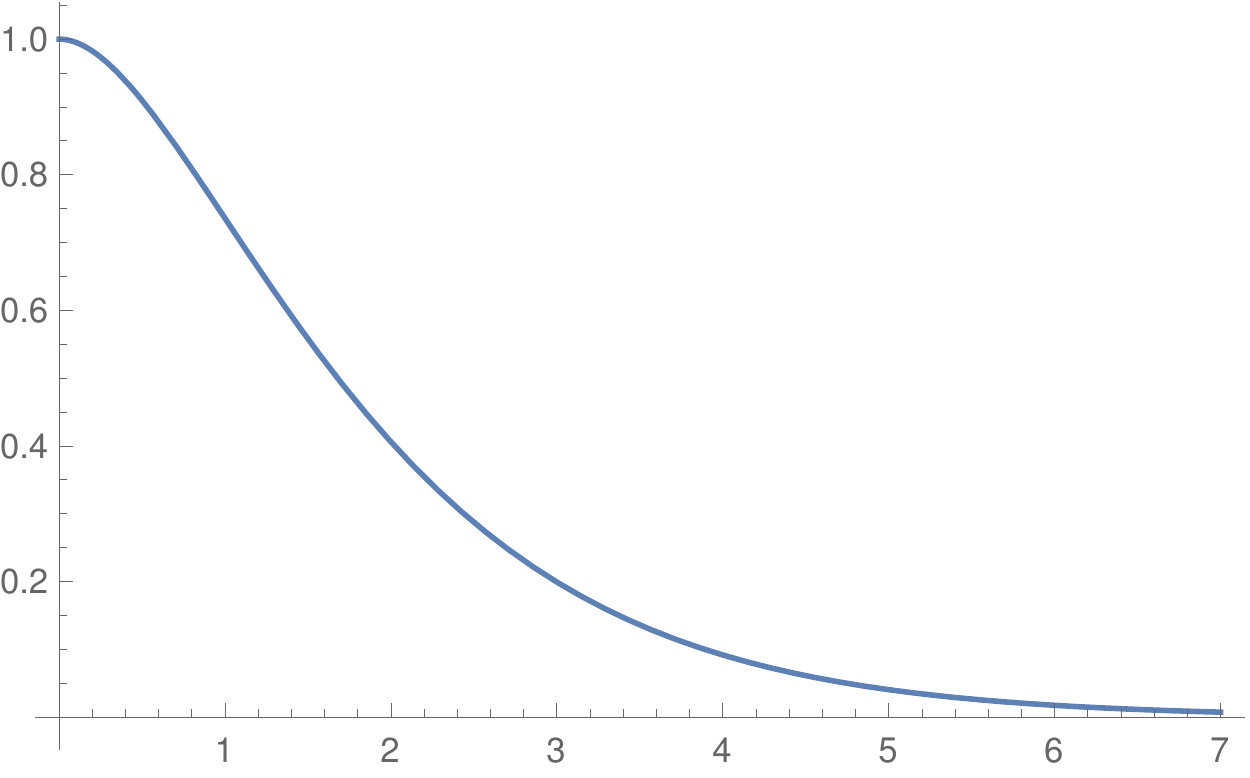}
	\qquad 
	\includegraphics[width=4.2cm]{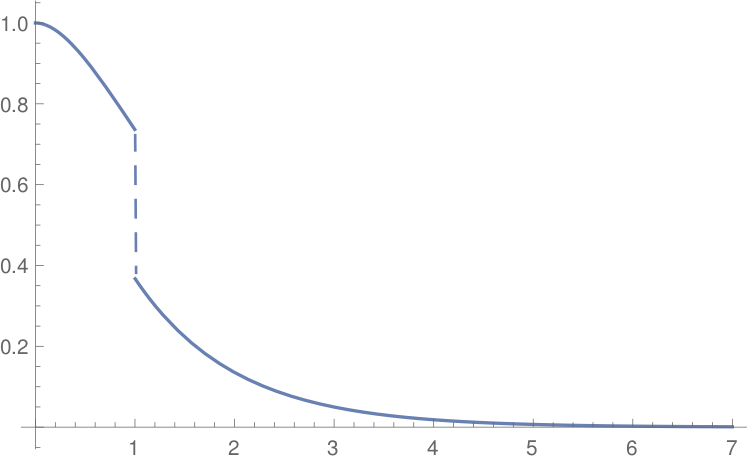}
	\caption{The asymptotic behavior on the scale $\alpha^{-1}$ of the quantity $\cD_{\sigma,x}^{{\rm J},\alpha}(t)$ for a typical starting environment $\sigma$ and arbitrary $x\in[n]$ in the case $\gamma=0$ (left), $\gamma=\infty$ (center) and $\gamma\in\(0,\infty \)$ (right). The transition point in this last scenario is $t=\gamma\alpha^{-1}\sim\tent$, and we have taken $\gamma=1$.}
\label{fig:fig1}
\end{figure}
The trichotomy displayed in Theorem \ref{th:trichotomy-joint} can be interpreted as follows; see 
also Figure \ref{fig:fig1}. On the time scale $\alpha^{-1}$ the regeneration times, that is the $t\in\bbN$ such that $J_t=1$, converge to a Poisson process of intensity $1$. Then $e^{-\beta}$ and $(1+\beta)e^{-\beta}$ represent the probability of having no regeneration and at most one regeneration up to time $\beta\a^{-1}$ respectively. 
Thus Theorem \ref{th:trichotomy-joint} essentially says that when the walk is far from being mixed within the current digraph then  two regenerations are necessary and sufficient for a complete loss of memory of the initial state, whereas if the walk has already mixed within the current digraph then all it is required to reach stationarity is one regeneration. 

\begin{remark}\label{rem:stat} 
From Theorem \ref{th:trichotomy-joint} it follows that 
\begin{equation}\label{eq:stati}
\lim_{n\to\infty}\|\nu-\pi^{\rm J}\|_\tv = 0,
\end{equation}
which in turn implies that all statements in Theorem \ref{th:trichotomy-joint} hold
with $\nu$ replaced by $\pi^{\rm J}$. 
Indeed, to prove \eqref{eq:stati} observe that 
the invariance $\pi^{\rm J}\cP_\a =\pi^{\rm J}$ implies that for any $t\in\bbN$
\begin{align*}\label{eq:statio}
\|\nu-\pi^{\rm J}\|_\tv& \leq 
\max_{\si\in\cC,x\in[n]}\cD_{\sigma,x}^{{\rm J},\alpha}(t).
\end{align*}
Taking $t=\beta\a^{-1}$, \eqref{eq:up-daj} implies that $\limsup_{n\to\infty}\|\nu-\pi^{\rm J}\|_\tv\leq (1+\beta)e^{-\beta}$, and letting $\beta\to\infty$ we obtain \eqref{eq:stati}.
\end{remark}

The proof of Theorem \ref{th:trichotomy-joint} will be crucially based on Theorem \ref{th:BCS} and Lemma \ref{le:approx}. The dynamic setting however requires an important extension of these results that can be formulated as follows. 
For any $(\si,\eta)\in \cC\times\cC$ and integers $0\leq s\leq t$, define
\begin{equation}\label{eq:qst}
Q^{s,t}_{\si,\eta}(x,y)=\sum_{z\in [n]}P_\sigma^s(x,z)P_\eta^{t-s}(z,y).
\end{equation}
%
%
Notice that the following theorem reduces to Theorem \ref{th:BCS} if $s=0$.
	\begin{theorem}[Cutoff on double digraphs]\label{th:2cutoff} 
			Fix $\beta>0$, take $t=\beta T_\ent$. 
Let $\sigma$ and $\eta$ be two independent uniformly random configurations in $\cC$, and let $\P$ denote the associated probability. Then for fixed $\beta>0$:
\begin{enumerate}
	\item If $\beta<1$, 
	\begin{equation}\label{lowbobo}
\min_{s\in[0,t]}\min_{x\in[n]}\| Q^{s,t}_{\si,\eta}(x,\cdot)
-\pi_\eta\|_{\tv}\overset{\P}{\longrightarrow}1.
\end{equation}
	\item If $\beta>1$, then for every $0<\varepsilon\leq \min\{1,(\beta-1)/2\}$ 
		\begin{equation}\label{upbobo}
		\max_{s\in I(\varepsilon,\beta)}\max_{x\in[n]}\| 
		Q^{s,t}_{\si,\eta}(x,\cdot)-\pi_\eta\|_{\tv}\overset{\P}{\longrightarrow}0,
		\end{equation}
\end{enumerate}
where $I(\varepsilon,\beta)=[0,(1-\varepsilon)\tent]\cup [(1+\varepsilon)\tent, (\beta-\varepsilon)\tent]$. 
	\end{theorem}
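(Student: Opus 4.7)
The plan is to use Theorem \ref{th:BCS} and Lemma \ref{le:approx} as the principal inputs, after splitting the range of $s$ into suitable sub-cases.

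For the lower bound \eqref{lowbobo} with $\beta<1$, I would extend the annealed loop-entropy argument of \cite{BCS1, BCS2} to the composite walk. Under the annealed measure, running $s$ steps on $\sigma$ and $t-s$ steps on the independent $\eta$ is, away from the rare event of a double visit within $t=\beta\tent$ steps, statistically indistinguishable from a single walk of length $t$ on one uniformly random configuration. Consequently the loop entropy $-\log Q^{s,t}_{\sigma,\eta}(x,X_t)$ concentrates, via a law of large numbers on its one-step increments, around $tH=\beta\log n$, uniformly in $s\in[0,t]$ and $x\in[n]$. Setting $A_x:=\{y\colon Q^{s,t}_{\sigma,\eta}(x,y)\geq n^{-\beta-\delta}\}$ for small $\delta>0$, one obtains $Q^{s,t}_{\sigma,\eta}(x,A_x)\overset{\P}{\longrightarrow}1$ while $|A_x|\leq n^{\beta+\delta}$, hence $\pi_\eta(A_x)=O(n^{\beta+\delta-1})\to 0$, giving \eqref{lowbobo}.

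For the upper bound \eqref{upbobo} with $\beta>1$, I would split $I(\varepsilon,\beta)$ into its two constituent intervals. When $s\in[(1+\varepsilon)\tent,(\beta-\varepsilon)\tent]$, Theorem \ref{th:BCS} and monotonicity of $s\mapsto\max_x\|P_\sigma^s(x,\cdot)-\pi_\sigma\|_{\tv}$ give $\max_{s,x}\|P_\sigma^s(x,\cdot)-\pi_\sigma\|_{\tv}\overset{\P}{\longrightarrow}0$, and TV contraction under $P_\eta^{t-s}$ reduces matters to $\|\pi_\sigma P_\eta^{t-s}-\pi_\eta\|_{\tv}\overset{\P}{\longrightarrow}0$; since $\pi_\sigma$ is widespread w.h.p.\ by Corollary \ref{co:widespread-pi}, $\eta$ is independent of $\sigma$, and $t-s\geq\varepsilon\tent\to\infty$, Lemma \ref{le:approx} applied conditionally on $\sigma$ closes this sub-case. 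When $s\in[0,(1-\varepsilon)\tent]$, I further distinguish: if $s\leq(\beta-1-\varepsilon')\tent$ for some small $\varepsilon'>0$, then $t-s\geq(1+\varepsilon')\tent$ and Theorem \ref{th:BCS} applied to $\eta$ together with the TV contraction inequality yields
\begin{equation*}
\|Q^{s,t}_{\sigma,\eta}(x,\cdot)-\pi_\eta\|_{\tv}\leq\max_z\|P_\eta^{t-s}(z,\cdot)-\pi_\eta\|_{\tv}\overset{\P}{\longrightarrow}0.
\end{equation*}

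The main obstacle is the residual range $s\in[\max\{0,(\beta-1-\varepsilon')\tent\},(1-\varepsilon)\tent]$, which is non-empty precisely when $\beta\in(1,2]$: here both $s$ and $t-s$ are of order $\tent$ yet each falls below the corresponding single-digraph cutoff time, so neither Theorem \ref{th:BCS} nor Lemma \ref{le:approx} is directly applicable. Closing this sub-case will require rerunning the annealed exploration of \cite{BCS1, BCS2} on the composite walk: since $\eta$ is sampled independently of $\sigma$, the environment switch at step $s$ amounts to restarting the exploration with the same per-step entropy rate $H$, and the cumulative entropy budget $(s+(t-s))H=\beta\log n>\log n$ should force the walk to saturate the state space, giving $\max_{x,y}Q^{s,t}_{\sigma,\eta}(x,y)\leq C/n$ with high probability. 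Combined with the matching lower bound $\pi_\eta(y)\geq c/n$ provided by \eqref{degs} or \eqref{degs+}, this yields the required TV convergence.
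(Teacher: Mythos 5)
Your treatment of the lower bound and of the ``easy'' sub-cases of the upper bound (where one of $s$, $t-s$ exceeds $(1+\varepsilon')\tent$) closely matches the paper's own argument and is sound, modulo a small imprecision: $|A_x|\le n^{\beta+\delta}$ does not directly give $\pi_\eta(A_x)=O(n^{\beta+\delta-1})$, since $\max_y\pi_\eta(y)$ is only $O(n^{-1+o(1)})$; instead one should use the second-moment bound $\sum_y\pi_\eta(y)^2=O(1/n)$ (Lemma~\ref{le:widespread-pi}) and Cauchy--Schwarz as in \eqref{eq:csc}, which gives $\pi_\eta(A_x)=O(n^{(\beta+\delta-1)/2})\to 0$. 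The paper's Lemma~\ref{le:lln} also does not rely on comparing the composite walk to a single-environment walk ``away from double visits''; it applies the single-environment weight LLN of~\cite{BCS1,BCS2} separately to the two legs and exploits independence of $(\si,\eta)$, then handles the degenerate cases $s$ or $t-s\ll\log n$ via the trivial bound $\Delta^{-s}\le\w\le 1$.

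The genuine gap is in the residual range $s\in[\varepsilon\tent,(1-\varepsilon)\tent]$ (your $[\max\{0,(\beta-1-\varepsilon')\tent\},(1-\varepsilon)\tent]$, non-empty when $\beta\in(1,2)$). Your proposed closure has two independent failures. First, the target estimate $\max_{x,y}Q^{s,t}_{\si,\eta}(x,y)\le C/n$ together with a pointwise lower bound $\pi_\eta(y)\ge c/n$ would \emph{not} imply $\|Q^{s,t}_{\si,\eta}(x,\cdot)-\pi_\eta\|_{\tv}\to 0$: both measures can be uniformly of order $1/n$ and still be asymptotically singular (e.g.\ one supported on half the vertices at mass $2/n$, the other uniform). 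What is actually needed is a one-sided pointwise domination of the form $Q^{s,t}_{\si,\eta}(x,y)\le (1+\varepsilon)\pi_\eta(y)+\varepsilon/n$ restricted to a set of paths carrying all but $\varepsilon$ of the mass, as in \eqref{eq:ub1}--\eqref{eq:nicepaths}. Second, the lower bound $\pi_\eta(y)\ge c/n$ is false: even for the DCM with all degrees $\ge 2$, $\pi_{\min}$ is typically polynomially smaller than $1/n$, and for the OCM vertices can have zero in-degree so $\pi_\eta(y)=0$ is possible. The paper avoids both problems by replacing $\pi_\eta$ with $\muin P_\eta^h$ (justified by Lemma~\ref{le:approx}), building the random forest $\cW_x^{\si,\eta}(r)$ rooted at the leaves of $\cT_x^\si(s)$ (Definition~\ref{def:construction2}), controlling the escape probability from that forest by the martingale Bennett-type bound of Lemma~\ref{le:lemma1}, and establishing the pointwise domination by the Stein-method concentration inequality of Lemma~\ref{le:chat2env}. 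None of these steps is a rerun of the single-environment annealed exploration; each requires non-trivial adaptation to the two-environment setting and to the uniformity in $s$. Your outline correctly identifies where the difficulty lies but does not supply a proof for it.
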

	
Theorem \ref{th:2cutoff} is the technical core of the paper. In order to simplify the exposition we have chosen to give the proof of the main results in Section \ref{se:joint} and Section \ref{trichRW} by assuming the validity of Theorem \ref{th:2cutoff}, which will be later proved in  Section \ref{sec:2cutoff}. 	We refer to Remark \ref{re:sologn} for more details on the requirement that $s\in I(\varepsilon,\beta)$ in Theorem \ref{th:2cutoff}.

Another key ingredient for the proof of Theorem \ref{th:trichotomy-joint} is the control of the annealed walk. By this we mean the law
\begin{equation}\label{eq:ann}
\Pan_x(\cdot)=\sum_{\eta\in\cC}\bu(\eta) \mathbf{P}^\eta_x(\cdot),
\end{equation}	
where $\mathbf{P}^\eta_x$ is defined before \eqref{eq:pisi2}.
\begin{lemma}\label{le:general}
The annealed law satisfies 
\begin{equation}\label{eq:anntv}
\lim_{n\to\infty}\sup_{x\in[n],t\ge 1}\|\Pan_x(X_t=\cdot) -\muin\|_{\tv}=0.
\end{equation}
\end{lemma}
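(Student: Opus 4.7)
The plan is to split the range of $t$ at a threshold $t_\star$ satisfying $\tent\ll t_\star\ll n^{1/2}$ (take for concreteness $t_\star=\log^2 n$) and to treat the two regimes with distinct ingredients.

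For the short-time regime $1\le t\le t_\star$, the idea is to construct a sequential coupling between the annealed walk $\{X_s\}$ started at $x$ and an i.i.d.\ sequence $\{Y_s\}_{s\ge 1}$ with common law $\muin$. I would reveal the random environment \emph{lazily}: the bijection $\si$ (model 1) or the injections $\{\si_v\}$ (model 2) are only exposed on the out-tails that the walk actually traverses. Whenever, at step $s$ and position $v=X_{s-1}$, the uniformly chosen tail $T_s\in E_v^+$ has not been exposed on a previous visit to $v$, its image is uniform on the remaining unmatched heads (model 1) or on the not-yet-used targets of $\si_v$ (model 2); in both cases the induced distribution of $X_s$ differs from $\muin$ by $O(s/n)$ in total variation, so $X_s$ can be coupled to $Y_s$ with failure probability $O(s/n)$. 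The main estimate is that the probability of the tail being stale at step $s$ is also $O(s/n)$: conditionally on the coupling not having failed by time $s-1$, the past trajectory is i.i.d.\ $\muin$, so the expected number of previous visits to the current vertex $v$ is controlled by $(s-1)\muin(v)+\ind_{v=x}$, and the bounded-degree assumption \eqref{degs}--\eqref{degs+} gives $\sum_v \muin(v)^2/d_v^+=O(1/n)$. A union bound over $s\le t$ then yields $\|\Pan_x(X_t=\cdot)-\muin\|_\tv=O(t^2/n)$ uniformly in $x$, which is $o(1)$ for $t\le t_\star$.

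For the long-time regime $t>t_\star$, the plan is to combine Theorem~\ref{th:BCS} with the monotonicity of $\|P_\si^t(x,\cdot)-\pi_\si\|_\tv$ in $t$: since $t_\star/\tent\to\infty$, this total variation distance tends to $0$ in probability, and dominated convergence gives
\[
\|\Pan_x(X_t=\cdot)-\E[\pi_\si]\|_\tv\le \E\|P_\si^t(x,\cdot)-\pi_\si\|_\tv\xrightarrow[n\to\infty]{}0,
\]
uniformly in $x$. To conclude, I must still show that $\E[\pi_\si]$ is close to $\muin$ in total variation. For this I would run the short-time coupling from the (widespread) initial law $\muin$, which yields $\|\E[\muin P_\si^{t_\star}]-\muin\|_\tv=o(1)$; on the other hand, applying Lemma~\ref{le:approx} to the widespread $\muin$ gives $\muin P_\si^{t_\star}\to\pi_\si$ in total variation in probability, hence $\|\E[\muin P_\si^{t_\star}]-\E[\pi_\si]\|_\tv\to 0$ by dominated convergence. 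Triangle inequality then closes the loop.

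The only genuine technical obstacle is the clean execution of the sequential coupling, and in particular the bootstrap that keeps the marginals $\Pan_x(X_{s-1}=\cdot)$ comparable to $\muin(\cdot)$ while the coupling has not yet failed; everything else is structural and reduces to invoking Theorem~\ref{th:BCS}, Lemma~\ref{le:approx}, and the identity $\E_\si[P_\si(x,y)]=\muin(y)$.
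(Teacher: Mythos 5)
Your plan is correct and its macro-structure matches the paper's proof exactly: split into a short-time regime where the annealed walk's one-time marginal is shown to be within $o(1)$ of $\muin$, and a long-time regime handled by Theorem~\ref{th:BCS} together with a bound on $\|\bbE\pi_\si-\muin\|_\tv$ obtained by reduction back to the short-time regime. The genuine difference is in how the short-time estimate is produced. The paper does not build the coupling: it cites \cite[Lemma~3.10]{CQ:cover}, which already provides $\Pan_x(X_t=y)=\muin(y)(1+o(1))$ uniformly in $x,y$ and $t=O(\log n)$ (with a separate bound $O(n^{-1}\log n)$ for the atom at $y=x$), and then simply splits at $t=2\tent$. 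You instead sketch an explicit sequential/exploration coupling — revealing tails as the walk traverses them and comparing the resulting law to an i.i.d.\ $\muin$ sequence — which, if executed, re-derives that lemma with the quantitative error $O(t^2/n)$ and in fact extends the short-time window to $t=o(n^{1/2})$, letting you split at $t_\star=\log^2 n$ rather than at $2\tent$. For the long-time regime both arguments use Theorem~\ref{th:BCS} (plus monotonicity in $t$ and bounded convergence) to kill $\bbE\|P_\si^t(x,\cdot)-\pi_\si\|_\tv$, and then bound $\|\bbE\pi_\si-\muin\|_\tv$: you go through Lemma~\ref{le:approx} applied to the widespread measure $\muin$, while the paper uses Theorem~\ref{th:BCS} at a fixed starting vertex with $s=\lfloor 2\tent\rfloor$; these are interchangeable. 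In short, your proposal is self-contained where the paper outsources, at the cost that the ``bootstrap'' keeping the pre-failure marginals comparable to $\muin$ — which you honestly flag — is left as a plan rather than a completed estimate; that bootstrap is precisely what the cited lemma from the companion paper carries out.
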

In words, the marginal at fixed time $t\geq 1$ of the annealed law is uniformly well approximated by the in-degree distribution $\muin$. This is a consequence of the directed nature of the network, which makes the walk essentially non-backtracking. 

Lemma \ref{le:general} will be proved in Section \ref{se:joint}. Once Lemma \ref{le:general} and Theorem \ref{th:2cutoff} are available, we shall obtain Theorem \ref{th:trichotomy-joint} by a decomposition of the law at time $t$ according to the location of the regeneration times; see Section \ref{sec:trich}. 

Finally, our last main result concerns the marginal distribution of the position of the walk, namely the non-Markovian process obtained by projecting the chain $(\xi_t,X_t)_{t\geq 0}$ on the second coordinate. According to Theorem \ref{th:trichotomy-joint} and Lemma \ref{le:general} the law of $X_t$, for $t$ and $n$ suitably large,  should be well approximated by $\muin$. 
The next result quantifies this statement by exhibiting once again a trichotomy.   
Define 	
\begin{equation}\label{eq:dista}
\cD_{\sigma,x}^{\a}(t):=\|\Pj_{\sigma,x}(X_t=\cdot)-\muin \|_{\tv}\,,\qquad q:= \bbE\|\pi_\si-\muin \|_{\tv}.
\end{equation}
We remark that if the sequences $\bd^\pm$ are {\em Eulerian}, that is $d^+_x=d^-_x$ for all $x\in[n]$, then $\pi_\si = \muin$ is stationary for all $\si\in\cC$. Thus in this case $q=0$.
On the other hand, results from \cite{Liu4,Liu1} imply that if the sequence is not eulerian then $q$ is bounded away from 0 and 1; see \cite[Theorem 4]{BCS1} and \cite[Remark 19]{CQ:PageRank} for more details.  

\begin{theorem}\label{th:trichotomy-rw}
Fix a sequence $\alpha=\a_n$  such that $\a_n\to 0$ as $n\to\infty$. Then, for all $\beta>0$
\begin{equation}\label{eq:up-daj2}
\limsup_{n\to\infty}\max_{\si\in\cC,x\in[n]}\cD_{\sigma,x}^{\alpha}(\beta\a^{-1})\leq e^{-\beta}. 
\end{equation}
Next, assume  that 
%
\begin{equation}\label{eq:def-ga2}
\gamma=\lim_{n\to\infty}\alpha\, T_\ent\in [0,\infty].
\end{equation}
Then, according to the value of $\gamma$ there are three scenarios:
	\begin{enumerate}
		\item If $\gamma=0$ then for all $\beta>0$:
		\begin{equation}
		\max_{x\in[n]}\left|\cD_{\sigma,x}^{\a}(\beta\alpha^{-1})- q\, e^{-\beta}\right|\overset{\P}{\longrightarrow}0,
		\end{equation}
		and, if $\beta\not=1$ then
			\begin{equation}
			\max_{x\in[n]}\left|\cD_{\sigma,x}^{\a}(\beta T_\ent)-\varphi(\beta )\right|\overset{\P}{\longrightarrow}0,
			\end{equation}
			where
			\begin{equation}\label{eq:varphi}
			\varphi(\beta):=\begin{cases}
			1&\text{ if }\beta<1\\
			q&\text{ if }\beta>1.
			\end{cases}
			\end{equation}
		\item If $\gamma=\infty$,  then for all $\beta>0$:
		\begin{equation}
		\max_{x\in[n]}\left|\cD_{\sigma,x}^{\a}(\beta \a^{-1})-e^{-\beta}\right|\overset{\P}{\longrightarrow}0.
		\end{equation}
		\item If $\gamma\in(0,\infty)$ then for all $\beta>0$, $\beta\not=\gamma$:
			\begin{equation}
			\max_{x\in[n]}\left|\cD_{\sigma,x}^{\a}(\beta\a^{-1})-\varphi(\beta/\gamma)e^{-\beta}
			\right|\overset{\P}{\longrightarrow}0.
			\end{equation}
	\end{enumerate}
\end{theorem}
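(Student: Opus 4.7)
The plan is to reduce $\cD^\a_{\si,x}(t)$ to the static-digraph distance $\|P_\si^t(x,\cdot)-\muin\|_\tv$ and then to establish a cutoff for the latter around $\tent$. First I would condition on the first regeneration time $T_1:=\inf\{s\ge 1:\, J_s=1\}$, which is geometric with parameter $\a$: on $\{T_1>t\}$ the environment never changes and $X_t\sim P_\si^t(x,\cdot)$, while on $\{T_1=s\le t\}$ the walker reaches $X_{s-1}$ via $s-1$ steps in $\si$, stays at $X_{s-1}$ at the regeneration, and then evolves for $t-s$ further steps starting from a fresh uniform environment. Setting
\[
\bar g_r(z,\cdot) := \sum_{\eta\in\cC}\bu(\eta)\, \Pj_{\eta,z}(X_r=\cdot),
\]
the Markov property gives the decomposition
\[
\Pj_{\si,x}(X_t=\cdot) = (1-\a)^t\, P_\si^t(x,\cdot) + \sum_{s=1}^{t}\a(1-\a)^{s-1}\sum_z P_\si^{s-1}(x,z)\,\bar g_{t-s}(z,\cdot).
\]

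Next I would show that $\max_{z\in[n]}\|\bar g_r(z,\cdot)-\muin\|_\tv\le \a^r + \e_n$ with $\e_n\to 0$ uniformly in $r\ge 1$. On the exponentially rare event (probability $\a^r$) that every step in $[1,r]$ is a regeneration one has $X_r=z$; otherwise, conditioning on the regeneration times and on all environments except the last one in which the walker actually moves, an application of Lemma~\ref{le:general} to that last walking segment (of length at least one) brings $X_r$ within $\e_n$ of $\muin$. Plugging into the decomposition, the geometric series $\sum_{s=1}^t\a(1-\a)^{s-1}=1-(1-\a)^t$ absorbs the main term while the error contributes $O(\a+\e_n)=o(1)$. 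By the triangle and reverse-triangle inequalities we therefore obtain the key reduction
\begin{equation}\label{plan:reduction}
\cD^\a_{\si,x}(t) = (1-\a)^t\,\|P_\si^t(x,\cdot)-\muin\|_\tv + o_{\P}(1),
\end{equation}
with the $o_\P(1)$ term uniform in $x\in[n]$.

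The second ingredient I would prove is a cutoff for $\|P_\si^t(x,\cdot)-\muin\|_\tv$ at $\tent$: for $t=\beta\tent$ with $\beta\neq 1$,
\[
\max_{x\in[n]}\bigl|\,\|P_\si^t(x,\cdot)-\muin\|_\tv - \varphi(\beta)\,\bigr|\overset{\P}{\longrightarrow} 0.
\]
The upper branch $\beta>1$ follows from Theorem~\ref{th:BCS} together with the concentration $\|\pi_\si-\muin\|_\tv\overset{\P}{\longrightarrow} q$, which I would deduce from the widespread property of $\pi_\si$ recalled in Corollary~\ref{co:widespread-pi}. The lower branch $\beta<1$ is the main technical difficulty: Theorem~\ref{th:BCS} alone only yields the weaker bound $\ge 1-q-o_\P(1)$, so one must instead extract from the entropic method of \cite{BCS1,BCS2} that w.h.p.\ $P_\si^t(x,\cdot)$ concentrates on a set of size $n^{\beta+o(1)}$; combined with the bounded-degree estimate $\muin(y)=O(1/n)$, such a set has $\muin$-mass $o(1)$ and hence $\|P_\si^t-\muin\|_\tv\to 1$.

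Given these two ingredients, the trichotomy follows by substitution into \eqref{plan:reduction}. For $t=\beta\a^{-1}$ one has $(1-\a)^t\to e^{-\beta}$ and $t/\tent\to \beta/\gamma$, whence $\cD^\a_{\si,x}(t)\overset{\P}{\longrightarrow}\varphi(\beta/\gamma)\,e^{-\beta}$: this specializes to $q\,e^{-\beta}$ in Case~1 ($\gamma=0$), to $e^{-\beta}$ in Case~2 ($\gamma=\infty$), and to the announced mixed formula in Case~3. For the second timescale $t=\beta\tent$ in Case~1 one has $(1-\a)^t\to e^{-\beta\gamma}=1$ and $t/\tent=\beta$, so $\cD^\a_{\si,x}(t)\overset{\P}{\longrightarrow}\varphi(\beta)$. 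Finally, the uniform upper bound \eqref{eq:up-daj2} is immediate from \eqref{plan:reduction} and the trivial estimate $\|P_\si^t-\muin\|_\tv\le 1$.
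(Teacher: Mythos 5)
Your overall strategy matches the paper's: reduce $\cD^\a_{\si,x}(t)$ to the static distance $(1-\a)^t\|P_\si^t(x,\cdot)-\muin\|_\tv$ (the paper's Corollary~\ref{co:trichotomy-rw}), then establish a cutoff for $\|P_\si^t(x,\cdot)-\muin\|_\tv$ at $\tent$ with limiting profile $\varphi$ (the paper's Lemma~\ref{le:1-eps}), and substitute. Your reduction conditions on the \emph{first} regeneration time and then invokes Lemma~\ref{le:general} on the last actual walking segment of $\bar g_r$; the paper instead conditions on the \emph{last} regeneration time $\t$ in $\{1,\dots,t\}$ (Proposition~\ref{pr:first-refresh}), which reaches the same conclusion somewhat more directly since the final walking segment is isolated at once. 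Both are correct, and your handling of the uniform bound on $\bar g_r$ (including the small $s=t$ term of weight $O(\a)$) is fine.

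There is, however, a genuine gap in your treatment of the static cutoff profile for the branch $\beta>1$. You assert that the concentration $\|\pi_\si-\muin\|_\tv\overset{\P}{\to}q$ ``would be deduced from the widespread property of $\pi_\si$ recalled in Corollary~\ref{co:widespread-pi}.'' This does not follow. The widespread property controls $\max_z\pi_\si(z)$ and $n\sum_z\pi_\si(z)^2$, i.e.\ it says $\pi_\si$ is flat, but says nothing about the \emph{fluctuations} of the random variable $\|\pi_\si-\muin\|_\tv$ as the configuration $\si$ varies; flatness of $\pi_\si$ is compatible with $\|\pi_\si-\muin\|_\tv$ fluctuating by a constant amount. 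Concentration of $\|\pi_\si-\muin\|_\tv$ around its mean $q$ is a separate result and requires its own argument; the paper cites \cite[Lemma~17]{BCS1} for the DCM (with a similar proof for the OCM). Your sketch of the lower branch $\beta<1$---that $P_\si^t(x,\cdot)$ concentrates on a set of size $n^{\beta+o(1)}$, which has negligible $\muin$-mass---is correct and corresponds to the paper's use of Lemma~\ref{le:lln} with $s=0$ together with the bound \eqref{quxs}. Once the concentration of $\|\pi_\si-\muin\|_\tv$ is supplied by the external result rather than by Corollary~\ref{co:widespread-pi}, the remainder of your substitution into the reduction is correct in all three cases.
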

\begin{figure}[h]
	\centering
		\includegraphics[width=4.2cm]{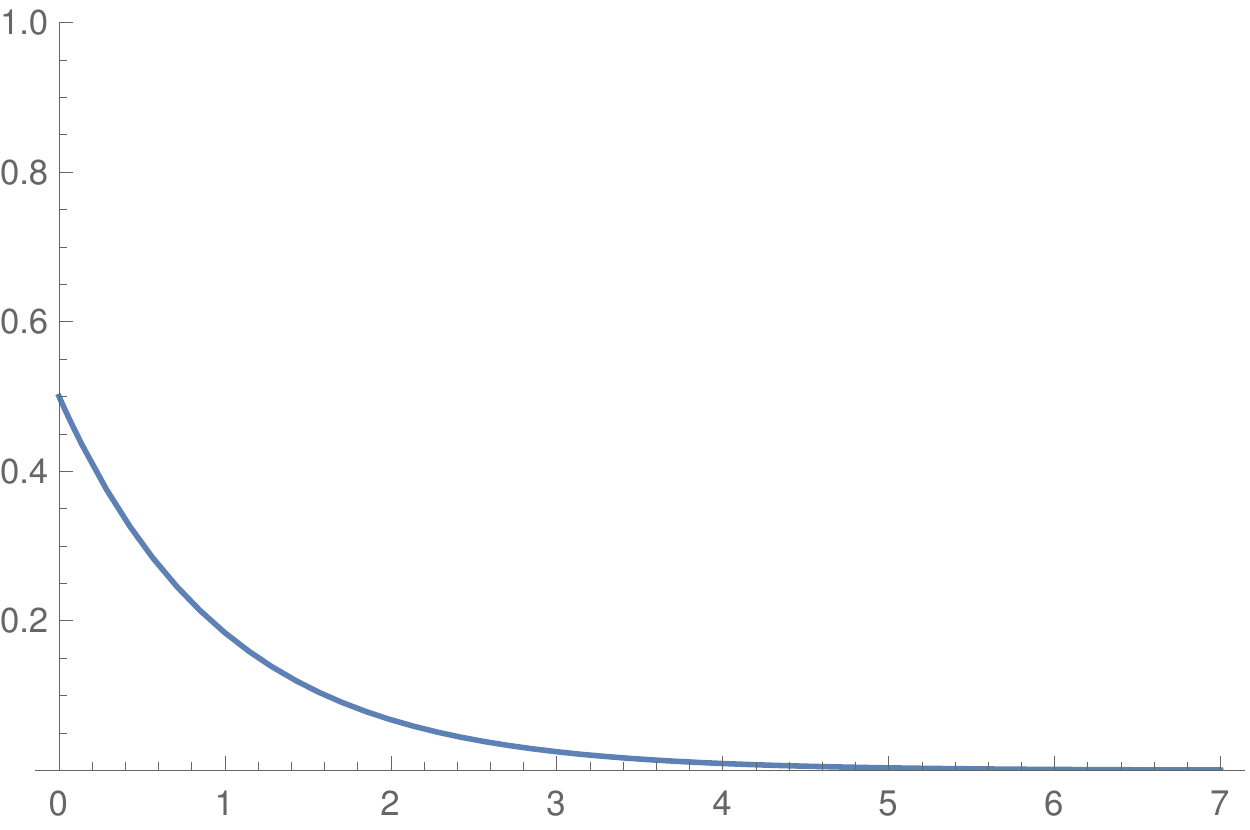}\qquad
		\includegraphics[width=4.2cm]{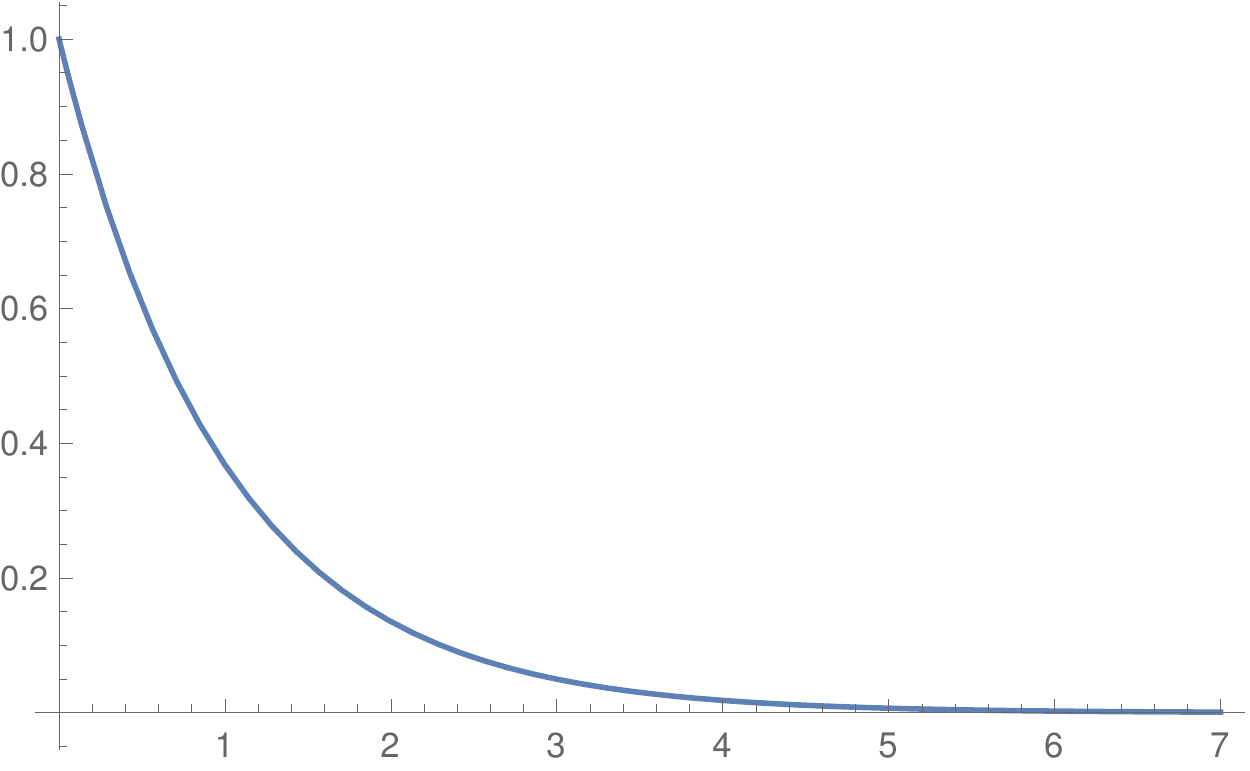}\qquad
	\includegraphics[width=4.2cm]{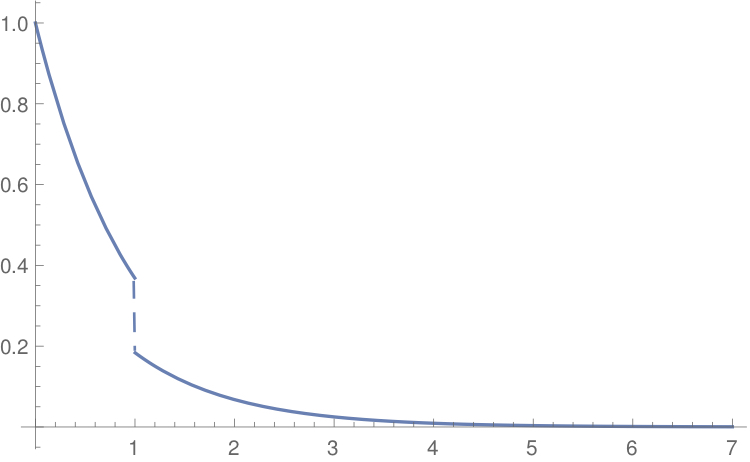}\caption{The asymptotic behavior on the scale $\alpha^{-1}$ of the quantity $\cD_{\sigma,x}^{\alpha}(t)$ for a typical starting environment $\sigma$ and arbitrary $x\in[n]$ in the case $\gamma=0$ (left), $\gamma=\infty$ (center) and $\gamma\in\(0,\infty \)$ (right). The transition point in the latter case is $t=\gamma\alpha^{-1}\sim \tent$. In this picture we take $\gamma=1$ and $q=1/2$.}
\label{fig:fig2}
\end{figure}
The above results can be roughly interpreted as follows. If we follow only the position of the particle then after the first regeneration time the walk has the annealed law, and by Lemma \ref{le:general} this is approximately $\muin$. Thus, a complete loss of memory of the initial state with relaxation to the limiting state $\muin$ occurs essentially at the time of the first regeneration of the digraph. On the other hand, if no regeneration occurs, then a partial loss of memory occurs at time $\tent$ because of the static mixing cutoff phenomenon, and this is quantified by the drop by a factor $q$ in total variation. The competition between these two effects explains the above triad; see Figure \ref{fig:fig2}. 

We conclude this introduction with some remarks on related work and a comment on possible extensions. 
The first instance of a trichotomy in the relaxation to equilibrium for random walk on dynamic graphs was revealed in \cite{AveGulVDHdH:DirConf2016,AveGulVDHdH:DirConf2018}.
The authors studied non-backtracking walks on undirected graphs undergoing partial regenerations and obtained results that are qualitatively similar to our Theorem \ref{th:trichotomy-rw}, with the difference that the quantity analogous to $q$ is zero in their case. 
While their model allows for more general regeneration mechanisms than the one considered here, a strongly simplifying feature of their setting with respect to ours is that the stationary distribution is not altered when the underlying graph is updated, and it has a simple explicit form. Inspired by these works, analogous trichotomy results were obtained for the PageRank surfer on static digraphs \cite{CQ:PageRank}. As we observed in \cite{CQ:PageRank}, teleportation in the PageRank process plays a role similar to the dynamic regeneration; see also \cite{vial2019restart,wang2019regeneration} for related developments. 

An interesting extension of the results presented here would be to consider partial regenerations of the underlying digraph instead of full regenerations. For instance, a  natural dynamic model for the DCM($\bd^\pm$) can be obtained by updating the permutation $\si$ realising the directed configuration using random transpositions only. Mixing time and the cutoff phenomenon are well understood for random transpositions \cite{diaconis1981generating}, and it seems natural to seek for analogues of the results 
obtained here in that finer setting.

\section{Trichotomy for the joint process}\label{se:joint}
We start with the proof of Lemma \ref{le:general}, and then prove Theorem \ref{th:trichotomy-joint} assuming the validity of Theorem \ref{th:2cutoff}. The proof of the latter is given in Section \ref{sec:2cutoff} below. 
\subsection{Proof of Lemma \ref{le:general}}
	We divide the proof in two cases: 
	$t\le 2 T_\ent$, 
	and 
	$t>2 \tent$. If $t\leq 2\tent$, in particular one has $t=O(\log n)$,  and 
we know from \cite[Lemma 3.10]{CQ:cover} that 
	\begin{equation}\label{eq:est1}
	\Pan_x(X_t=y)=\muin(y)(1+o(1)), \qquad \Pan_x(X_t=x)=O\(n^{-1}\log n\), 	
	\end{equation}
	for $t=O(\log n)$,
uniformly in $x,y\in[n]$. Moreover, the estimates \eqref{eq:est1} are uniform in $t\leq C\log n$, for any fixed constant $C>0$. The proof of \eqref{eq:est1} is carried out in detail in  \cite[Lemma 3.10]{CQ:cover} for model 1 only, but the very same arguments imply the validity of the statements for model 2 as well. 
	The estimates in \eqref{eq:est1} 
	are enough to conclude that uniformly in $x\in[n]$,
	\begin{align}\label{eq:small-t}
	\|\Pan_x(X_t=\cdot)-\muin\|_\tv=\frac12\sum_{y\in[n]}| \Pan_x(X_t=y)-\muin(y)|=o(1)\,,\qquad t=O(\log n).
	\end{align}
	We now turn to the case $t>2 \tent$. 
	By the triangle inequality we have
	\begin{align}\label{eq:dis-tri}
	\|\Pan_x(X_t=\cdot)-\muin\|_\tv\leq 
	\bbE\|P_\si^t(x,\cdot)-\pi_\si\|_\tv + \|\bbE\pi_\si-\muin\|_\tv.
	\end{align}
	Here the expectation is with respect to the uniform distribution over $\si$ and we have used the identity $\Pan_x(X_t=\cdot)=\bbE P_\si^t(x,\cdot)$.
	From Theorem \ref{th:BCS}, and the fact that the total variation distance is bounded, we obtain 
	\begin{align}\label{eq:(A)}
	\bbE\|P_\si^t(x,\cdot)-\pi_\si\|_\tv=o(1)\,,
	\end{align} uniformly in $x\in[n]$, and $t>(1+\e)\tent$, for any fixed $\e>0$. 
Thus  the first term on the right hand side of \eqref{eq:dis-tri} is $o(1)$.  
Concerning the second term on the right hand side of \eqref{eq:dis-tri}, using again the triangle inequality and setting $s=\lfloor2\tent\rfloor$, we write
	\begin{align}
	\|\bbE\pi_\si-\muin\|_\tv\leq \bbE\|P_\si^s(x,\cdot)-\pi_\si\|_\tv + \|\Pan_x(X_s=\cdot)-\muin\|_\tv =o(1),
	\end{align}
	where the first term is estimated using  \eqref{eq:(A)} and the second term is estimated using \eqref{eq:small-t}. 
This ends the proof of Lemma \ref{le:general}.

\subsection{Proof of Theorem \ref{th:trichotomy-joint}}\label{sec:trich}
For every $(\eta,y)\in \cC\times[n]$, define
$$\mu^{\si,x}_t(\eta,y)=\Pj_{\si,x}(\xi_t=\eta,X_t=y).$$
Recall that $J_s$, $s\in\bbN$ are i.i.d.\ Bernoulli($\a$) random variables indicating the occurrence of the regeneration event.  
For each $t\geq 1$, consider the random variable $\t=\t(t)$ defined by
\begin{equation}\label{eq:rts}
\tau=\ind\left(\exists s\in\{1,\dots,t\}: \,J_s=1\right)\sup\{s\le t\:|\: J_s=1 \}.
\end{equation}
We may write
\begin{align*}
\mu^{\si,x}_t(\eta,y)&=
\sum_{s=0}^t\Pj(\tau=s)\Pj_{\si,x}\((\xi_t,X_t)=(\eta,y)\:|\:\tau=s \)\\& =
(1-\a)^t\ind_\si(\eta)P_\si^t(x,y)+\sum_{s=1}^t\a(1-\a)^{t-s}\sum_{z\in[n]}\sum_{\xi\in \cC}\bu(\eta)\mu_{s-1}^{\si,x}(\xi,z)P_\eta^{t-s}(z,y).
\end{align*}
Since $\mu^{\si,x}_{s-1}(\xi,z)$ admits the same decomposition we obtain the expansion: 
\begin{align*}
\mu^{\si,x}_t(\eta,y)=A_t^{\si,x}(\eta,y)+B_t^{\si,x}(\eta,y)+C_t^{\si,x}(\eta,y),
\end{align*}
where
\begin{gather*}
A_t^{\si,x}(\eta,y)=(1-\a)^t\ind_\si(\eta)P_\si^t(x,y),\\
 B^{\si,x}_t(\eta,y)=\a(1-\a)^{t-1}\sum_{s=1}^t\sum_{z\in[n]}\bu(\eta)P_\si^{s-1}(x,z)P_\eta^{t-s}(z,y),
\\
C^{\si,x}_t(\eta,y)=\sum_{s=1}^t\sum_{r=1}^{s-1}\a^2(1-\a)^{t-1-r}
\sum_{v,z\in[n]}\sum_{\xi,\omega\in \cC}\bu(\eta)\bu(\xi)\mu_{r-1}^{\si,x}(\omega,v)P_\xi^{s-1-r}(v,z)P_\eta^{t-s}(z,y).
\end{gather*}
%
%
Notice that if $W=\nu$, or $W=B_t^{\si,x}$, or $W=C_t^{\si,x}$, for any fixed choice of $\si\in\cC$ one has
$$\sum_{\eta:\, \eta\not=\sigma}\sum_{y\in[n]}W(\eta,y)=\sum_{\eta\in\cC}\sum_{y\in[n]}W(\eta,y)+O\(|\cC|^{-1} \).$$
Therefore,
\begin{align}\label{eq:inse0}
2\|\mu_{t}^{\si,x}-\nu\|_{\tv}&=\sum_{\eta\in\cC}\sum_{y\in[n]}|\mu_{t}^{\si,x}(\eta,y)-\nu(\eta,y)|\nonumber\\&=
\sum_{y\in[n]}|\mu_{t}^{\si,x}(\si,y)-\nu(\si,y)|+ \sum_{\eta\not=\sigma}\sum_{y\in[n]}|\mu_{t}^{\si,x}(\eta,y)-\nu(\eta,y)|\nonumber\\&=
(1-\a)^t+
\sum_{\eta \in \cC}\sum_{y\in[n]}\left|B_t^{\si,x}(\eta,y)+C_t^{\si,x}(\eta,y)-\bu(\eta)\pi_\eta(y)\right|+o(1).
\end{align}
We may rewrite $C_t^{\si,x}(\eta,y)=\chi \bu(\eta) \widehat{C}^{\si,x}_t(\eta,y)$, where 
\begin{gather*}
\chi=1-(1-\a)^t-\a t(1-\a)^{t-1}=\a^2\sum_{s=1}^t\sum_{r=1}^{s-1}(1-\a)^{t-r-1},\\
\widehat{C}^{\sigma,x}_t(\eta,y)=\frac{1}{\chi}\a^2\sum_{s=1}^t\sum_{r=1}^{s-1}(1-\a)^{t-r-1}\sum_{z\in[n]}\sum_{v\in[n]}\mu_{r-1}^{\si,x}(v)\Pan_v\(X_{s-1-r}=z\) P_\eta^{t-s}(z,y),
\end{gather*}
and we use the notation $\mu_{r-1}^{\si,x}(v):=\sum_{\omega\in \cC}\mu_{r-1}^{\si,x}(\omega,v)$.  Notice that $\widehat{C}^{\sigma,x}_t(\eta,\cdot)$ is a probability on $[n]$. 
Define also the probability $\lambda_\eta$ by 
\begin{align}\label{lambdaeta}
\lambda_\eta(y)=\frac{1}{\chi}\a^2\sum_{s=1}^t\sum_{r=1}^{s-1}(1-\a)^{t-r-1}\muin P_\eta^{t-s}(y).
\end{align}
Lemma \ref{le:general} implies that uniformly in $\eta\in\cC$:
\begin{align}\label{eq:inse}
\| \widehat C^{\si,x}_t(\eta,\cdot) -\pi_\eta\|_{\tv}=\| \lambda_\eta-\pi_\eta\|_{\tv}+o(1).
\end{align}
Moreover, Lemma \ref{le:approx} implies that whenever $t-s\to\infty$:
\begin{align}\label{eq:inse2}
\sum_{\eta}\bu(\eta)\|\muin P_\eta^{t-s}-\pi_\eta\|_{\tv}=o(1).
\end{align}
Since $\alpha\to 0$, \eqref{lambdaeta} and \eqref{eq:inse2} imply
\begin{align}\label{eq:inse3}
\sum_{\eta}\bu(\eta)\|\lambda_\eta-\pi_\eta\|_{\tv}=o(1).
\end{align}
Inserting \eqref{eq:inse}, \eqref{eq:inse3} in \eqref{eq:inse0} we obtain
\begin{align}\label{eq:inse10}
2\|\mu_{t}^{\si,x}-\nu\|_{\tv}&=
(1-\a)^t+
\sum_{\eta \in \cC}\sum_{y\in[n]}\left|B_t^{\si,x}(\eta,y)+(1-\chi)\bu(\eta)\pi_\eta(y)\right|+o(1).
\end{align}
Let us now take $t=\beta\alpha^{-1}$, for some fixed constant $\beta>0$. Since $\alpha\to 0$ we have $1-\chi\to e^{-\beta}(1+\beta)$ and 
\begin{align}\label{eq:asymp}
2\|\mu_{t}^{\si,x}-\nu\|_{\tv}= e^{-\beta}+\sum_{\eta\in \cC}\bu(\eta)\psi^\si_{t}(\eta)+o(1),
\end{align}
where we define
\begin{equation}\label{eq:control}
\psi^\si_{t}(\eta)=\sum_{y\in[n]}\left|\beta e^{-\beta} \widehat{B}_t^{\si,x}(\eta,y)
-e^{-\beta}(1+\beta)\pi_\eta(y) \right|,\,
\end{equation}
with $\widehat{B}_t^{\si,x}(\eta,\cdot)$ the probability on $[n]$ defined by
\begin{equation}\label{eq:defwidehatb}
\widehat{B}_t^{\si,x}(\eta,y)=\frac{1}{t}\sum_{s=1}^t\sum_{z\in[n]}P_\si^{s-1}(x,z)P_\eta^{t-s}(z,y).
\end{equation}
We start by noting that 
$$\psi^\si_t(\eta)\le2\beta e^{-\beta}\|\widehat{B}_t^{\si,x}(\eta,\cdot)-\pi_\eta  \|_{\tv}+e^{-\beta}.$$
In particular, \eqref{eq:asymp} shows that uniformly in $(\sigma,x)\in\cC\times[n]$
$$\|\mu_{t}^{\si,x}-\nu\|_{\tv}\le (1+\beta)e^{-\beta}+o(1),$$
which proves \eqref{eq:up-daj}.
At this point we split the analysis in four cases.

\subsubsection{$\alpha \tent\to\gamma=\infty$}	
In this case we notice that for fixed $\beta>0$, $$t=\beta \a^{-1}=o(\log n).$$
	Therefore, for every $\sigma\in\cC$, $x\in[n]$ there must exist a set $\cI\subset[n]$ such that for all $1\leq s\leq t$:
	$$P_\si^{s-1}(x,\cI)=1,\qquad |\cI|\le \Delta^{t}=n^{o(1)}. $$
	Moreover, for every $\eta\in\cC$ and for every $z\in[n]$ there exists a set $\cJ_z\in[n]$ such that for every $s\le t$ 
	$$P_\eta^{t-s}(z,\cJ_z)=1,\qquad |\cJ_z|=n^{o(1)}. $$
	Therefore, 
	setting $\cJ=\cup_{z
		\in\cI}\cJ_z$, 
$$|\cJ|=n^{o(1)},\qquad \widehat B^{\si,x}_t(\eta,\cJ)=1-o(1).$$
On the other hand,  w.h.p.\ with respect to $\eta$ one has $\pi_\eta(\cJ)=o(1)$. To prove this we use the following statement, which will be established separately in Lemma \ref{le:widespread-pi} below: 
for some constant $C>0$, w.h.p.\ with respect to $\eta$ one has 
\begin{equation}\label{le41}
\sum_{x\in[n]} \pi_\eta(x)^2\leq Cn^{-1}.
\end{equation}
Assuming \eqref{le41},  for any $U\subset [n]$, the Cauchy--Schwarz inequality implies
\begin{equation}\label{eq:csc}
\pi_\eta(U)^2\leq |U|\sum_{x\in[n]} \pi_\eta(x)^2\leq C|U|n^{-1}.  
\end{equation}
 It follows that w.h.p.\  with respect to $\eta$ and uniformly  with respect to $\si$,   
\begin{align*}
\psi^\si_t(\eta)=
2\beta e^{-\beta}+e^{-\beta} +o(1).
\end{align*}
In conclusion, \eqref{eq:asymp} implies  
$$\|\mu^{\si,x}_t-\nu\|_{\tv}= (1+\beta)e^{-\beta}+o(1),$$
which proves the desired statement \eqref{scena2}.  Note that, because of the average over $\eta\in\cC$ in \eqref{eq:asymp}, the convergence in \eqref{scena2} actually holds uniformly in $\si\in\cC$ rather than in $\bbP$-probability as stated. 

\subsubsection{$\a \tent\to\gamma=0$.}
	In this case it possible to find a sequence $\upsilon=\upsilon(n)=o(1)$ that vanishes sufficiently slowly that
	$$\upsilon t=\upsilon \beta\a^{-1}\gg T_\ent.$$
	If $\widehat E_t^{\si,x}(\eta,\cdot)$ denotes the probability on $[n]$ defined by
	\begin{equation}\label{eqdefehat}
	\widehat E_t^{\si,x}(\eta,y)=\frac{1}{(1-2\upsilon)t}\sum_{s=\upsilon t}^{(1-\upsilon)t}\sum_{z\in[n]}P_{\si}^{s-1}(x,z)P_\eta^{t-s}(z,y),
	\end{equation}
	then using the definition of $\widehat B_t^{\si,x}$ in \eqref{eq:defwidehatb} we immediately get
	\begin{equation}\label{beeq}
	\|\widehat B_t^{\si,x}(\eta,\cdot)-\widehat{E}_t^{\si,x}(\eta,\cdot)\|_{\tv}=O(\upsilon).
	\end{equation}
Let us write
	$$ \sum_{z\in[n]}P_\si^{s-1}(x,z)P_\eta^{t-s}(z,\cdot)=:\lambda P_\eta^{t-s}(\cdot),$$
	which implicitly defines a probability measure $\l$, 
and notice that 
	$$\|\lambda P_\eta^{t-s}-\pi_\eta\|_{\tv}\le \max_{x\in[n]}\| P_\eta^{t-s}(x,\cdot)-\pi_\eta\|_{\tv}.$$
	Since $\upsilon t\leq s\leq (1-\upsilon)t$, one has $t-s\gg T_\ent$, and from Theorem \ref{th:BCS} we conclude that w.h.p.\ with respect to $\eta$:
	\begin{align*}
	\|\widehat E_t^{\si,x}(\eta,\cdot)-\pi_\eta\|_{\tv}=o(1).
	\end{align*}
	Therefore, w.h.p.\
	$$	\|\widehat B_t^{\si,x}(\eta,\cdot)-\pi_\eta\|_{\tv}=o(1).$$
	By the triangular inequality and \eqref{eq:asymp},
	\begin{align*}
	\|\mu_{t}^{\si,x}-\nu\|_{\tv}=e^{-\beta} +o(1).
	\end{align*}
This proves \eqref{scena1}. As in the previous case, it is worth noting that the convergence in \eqref{scena1} actually holds uniformly in $\si\in\cC$ rather than in $\bbP$-probability.

\subsubsection{$\a T_\ent\to\gamma\in(0,\infty)$ and $\beta<\gamma$} \label{subsec222}
	We want to control $\psi^\si_t(\eta)$ as defined in \eqref{eq:control}. 	If $\beta<\gamma$ then $t=(1-\epsilon)T_\ent$ for some $\epsilon\in(0,1)$. 
We argue that w.h.p.\ with respect to the independent pair $(\sigma,\eta)$, uniformly in $x$,
	\begin{equation}
	\label{eq:csc1}
	\|\widehat{B}_t^{\si,x}(\eta,\cdot)-\pi_\eta \|_{\tv}=1-o(1).
	\end{equation}
Let $U_x^{s-1}$ denote the set of $y\in[n]$ such that
	$$\sum_{z\in[n]}P^{s-1}_\si(x,z)P^{t-s}_\eta(z,y)\ge e^{-(1+\epsilon)H t}$$
	Summing over $y\in U_x^{s-1}$, we must have
	$$|U_x^{s-1}|\le e^{(1+\epsilon)H t} 
	=n^{1-\epsilon^2}$$
In the proof of Theorem \ref{th:2cutoff} , see Eq. \eqref{eq:notas} and \eqref{quxs} below, we prove that  w.h.p.\ with respect to  $(\si,\eta)$
$$	\min_{s\in[1,t]}\min_{x\in[n]}\sum_{y\in U_x^{s-1}}\sum_{z\in[n]}P^{s-1}_\si(x,z)P^{t-s}_\eta(z,y)=1-o(1).$$
	Setting $U=\cup_{s=1}^{t}U_x^{s-1}$ 	we have 
	$$\widehat{B}_t^{\si,x}(\eta,U)=\sum_{y\in U}\widehat{B}_t^{\si,x}(\eta,y)\ge \frac{1}{t}\sum_{s=1}^{t}\sum_{y\in U_x^{s-1}}\sum_{z\in[n]}P^{s-1}_\si(x,z)P^{t-s}_\eta(z,y)=1-o(1).$$
Since $|U|\le \tent\,n^{1-\epsilon^2}=o(n)$,	$\widehat{B}_t^{\si,x}(\eta,\cdot)$ is w.h.p.\ asymptotically  singular with respect to $\pi_\eta$, namely 
$$
\|\widehat{B}_t^{\si,x}(\eta,\cdot)-\pi_\eta \|_{\tv}\geq \widehat{B}_t^{\si,x}(\eta,U) - \pi_\eta(U) =1 -o(1),
$$
where we have estimated $\pi_\eta(U) =o(1)$ as in  \eqref{eq:csc}.
This proves \eqref{eq:csc1}.
Inserting this in \eqref{eq:asymp}--\eqref{eq:control}, it follows that w.h.p.\ with respect to $\si\in\cC$:
	\begin{equation*}
	\|\mu^{\si,x}_t-\nu\|_{\tv}=(1+\beta)e^{-\beta}+o(1).
	\end{equation*}

\subsubsection{$\a T_\ent\to\gamma\in(0,\infty)$ and $\beta>\gamma$.} By definition there must exist some $\epsilon>0$ such that
$$t=\beta \a^{-1}=(1+o(1))\frac{\beta}{H\gamma}\log n>(1+\epsilon) T_\ent,$$
for all $n$ large enough.
As in \eqref{beeq}, for every $\upsilon\in(0,\epsilon/2)$, at the price of an additive error $O(\upsilon)$ in total variation,  we can 
	replace $\widehat{B}_t^
	{\si,x}(\eta,\cdot)$ by the probability $\widehat{E}^{\si,x}_t(\eta,\cdot)$ defined in \eqref{eqdefehat}. 
Since $t>(1+\epsilon) T_\ent$ and the summation involves only $s\in[\upsilon t,(1-\upsilon)t]$, we can use Theorem \ref{th:2cutoff} to obtain that w.h.p.\ with respect to the independent pair $(\si,\eta)$,
	\begin{equation}\label{eq:bigsmall}
	\| \widehat{E}^{\si,x}_t(\eta,\cdot)-\pi_\eta\|_{\tv}=o(1)
	\end{equation}
From \eqref{eq:control}, 
\begin{align*}
\psi^\si_t(\eta)=e^{-\beta}\sum_{y\in[n]}|\beta \widehat{E}^{\si,x}_t(\eta,y)-(1+\beta)\pi_\eta(y) |+O(\upsilon)
=e^{-\beta}+O(\upsilon)+o(1).
\end{align*}
Since $\upsilon$ is arbitrarily small, from \eqref{eq:asymp} we obtain that w.h.p.\ with respect to $\si\in\cC$:
$$\|\mu^{\si,x}_t-\nu\|_{\tv}=e^{-\beta}+o(1).$$

\section{Trichotomy for the random walk}\label{trichRW}
Here we prove Theorem \ref{th:trichotomy-rw}. The main observation can be stated as follows. 
\begin{proposition}\label{pr:first-refresh}
Let $\t=\t(t)$ denote the random variable in  \eqref{eq:rts}. Then, uniformly in $t\geq 2$:
	$$\lim_{n\to\infty}\max_{\sigma,x}\| \Pj_{\sigma,x}\(X_t=\cdot\:|\:1\leq \tau<t \)-\muin\|_{\tv}=0$$
\end{proposition}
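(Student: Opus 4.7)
The plan is to condition on the value of $\tau$ — the time of the last regeneration before $t$ — and exploit the fact that the regenerated configuration at time $\tau$ is uniform in $\cC$ and independent of the walker's position $X_{\tau}$. Once this independence is isolated, the statement reduces immediately to Lemma \ref{le:general} applied to the annealed walk.

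More precisely, I would first fix $s\in\{1,\ldots,t-1\}$ and establish the pointwise decomposition
\begin{align*}
\Pj_{\sigma,x}\bigl(X_t=y\,\big|\,\tau=s\bigr) = \sum_{z\in[n]} \mu_s^{\sigma,x}(z)\,\Pan_z\bigl(X_{t-s}=y\bigr),
\end{align*}
where $\mu_s^{\sigma,x}(z):=\Pj_{\sigma,x}(X_{s-1}=z\mid \tau=s)$. The justification is the following. The event $\{\tau=s\}=\{J_s=1,\ J_{s+1}=\cdots=J_t=0\}$ depends only on the Bernoulli sequence $(J_r)_{r\le t}$, hence conditioning on it does not alter the law of the resampled configuration $\xi_s=:\eta$, which remains uniform on $\cC$ and independent of $X_{s-1}$ and of the earlier regenerations. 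Since $J_s=1$ forces $X_s=X_{s-1}$ and no further regeneration occurs in $\{s+1,\ldots,t\}$, the walker performs exactly $t-s$ steps on the fresh sample $\eta$; averaging over the uniform law of $\eta$ produces the annealed transition $\Pan_z(X_{t-s}=\cdot)$, cf.\ \eqref{eq:ann}.

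Next, since $t-s\geq 1$, Lemma \ref{le:general} gives $\|\Pan_z(X_{t-s}=\cdot)-\muin\|_\tv = o(1)$ uniformly in $z\in[n]$ and in the time horizon $t-s\ge 1$. Combined with the triangle inequality and the fact that $\mu_s^{\sigma,x}$ is a probability measure, this yields $\|\Pj_{\sigma,x}(X_t=\cdot\mid \tau=s)-\muin\|_\tv=o(1)$ uniformly in $(\sigma,x,s)$. Averaging over $s\in\{1,\ldots,t-1\}$ with weights $\Pj(\tau=s\mid 1\le\tau<t)$ and using convexity of the total variation distance delivers the announced uniform convergence.

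There is no serious technical obstacle: the proposition is essentially a structural consequence of the regeneration mechanism once Lemma \ref{le:general} is in hand. The only subtle point will be to confirm that conditioning on the $J$-pattern defining $\{\tau=s\}$ preserves the independence of the fresh configuration $\eta$ from the past trajectory — this is immediate from the product form of the update rule in \eqref{eq:transition}, where the choices of $\eta$ at each regeneration are made independently of $(J_r)_{r\le t}$ and of all previous randomness.
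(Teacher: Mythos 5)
Your proof is correct and follows essentially the same route as the paper: decompose on the value $\tau=s$ of the last regeneration, use that the fresh configuration $\eta$ at time $s$ is uniform and independent of the walker's position $X_{s-1}=X_s$ to identify the conditional law as a mixture of annealed transition probabilities, and then invoke Lemma \ref{le:general} uniformly in $t-s\ge1$ together with convexity of the total variation distance. The paper writes the $\eta$-average explicitly as $\sum_\eta\bu(\eta)\lambda P_\eta^{t-s}$ rather than through $\Pan_z$, but this is a purely cosmetic difference.
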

\begin{proof}
Observe that 
\begin{align}\label{eq:ob}
\Pj_{\sigma,x}\(\tau\in\{0,t\} \)=1-\Pj_{\sigma,x}\(1\leq \tau<t \) = (1-\alpha)^t + \a(1-\a)^{t-1} = (1-\a)^{t-1}.
\end{align}
Moreover, if $1\leq s<t$, 
	$$\Pj_{\sigma,x}\(X_t=y;\:\tau=s \)=\alpha(1-\alpha)^{t-s}\sum_{\eta\in \cC}\bu(\eta)\lambda P_\eta^{t-s}(y)$$
	where $\lambda$ is the probability measure given by 
	$$\lambda(z)=\Pj_{\sigma,x}(X_{s}=z\:|\:\tau=s).$$
	We then compute the conditional probability
	\begin{align}
	\Pj_{\sigma,x}\(X_t=y\:|\:1\leq \tau<t \)=&\frac{1}{1-(1-\alpha)^{t-1}}\sum_{s=1}^{t-1}\Pj_{\sigma,x}\(X_t=y;\:\tau=s \)\\
	=&\frac{1}{1-(1-\alpha)^{t-1}}\sum_{s=1}^{t-1}\alpha(1-\alpha)^{t-s}\sum_{\eta\in \cC}\bu(\eta)\lambda P_\eta^{t-s}(y).
	\end{align}
	Now we can rely on the uniform bound of Lemma \ref{le:general} to conclude
	\begin{align*}
	&\| \Pj_{\sigma,x}\(X_t=\cdot\:|\:1\leq \tau<t \)-\muin\|_{\tv}
	\\
	&\qquad \le\sum_{s=1}^{t-1}\frac{\a(1-\a)^{t-s}}{1-(1-\a)^{t-1}}\max_{x\in[n]}\| \Pan_x(X_{t-s}=\cdot)-\muin\|_{\tv}=o(1).
	\qedhere\end{align*}
\end{proof}

\begin{corollary}\label{co:trichotomy-rw}
Uniformly in $t\geq 1$:
	\begin{align*}
	\|\Pj_{\sigma,x}(X_t=\cdot)-\muin\|_{\tv}=(1-\alpha)^{t}\|P_\sigma^t(x,\cdot)-\muin\|_{\tv}+o(1).
	\end{align*}
\end{corollary}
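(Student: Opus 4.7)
The plan is to decompose $\Pj_{\sigma,x}(X_t\in\cdot)$ according to the value of $\tau=\tau(t)$ from \eqref{eq:rts}, namely
\[
\Pj_{\sigma,x}(X_t\in\cdot)=\Pj_{\sigma,x}(X_t\in\cdot;\tau=0)+\Pj_{\sigma,x}(X_t\in\cdot;1\leq\tau<t)+\Pj_{\sigma,x}(X_t\in\cdot;\tau=t).
\]
The first piece is exactly $(1-\alpha)^t P_\sigma^t(x,\cdot)$, so the task reduces to showing that the sum of the remaining two pieces is close, in total variation and uniformly in $(\sigma,x,t)$, to $(1-(1-\alpha)^t)\muin$.

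For the middle piece, I would factor out the conditional law and apply Proposition \ref{pr:first-refresh} directly: since $\Pj_{\sigma,x}(1\leq\tau<t)\leq 1$, multiplying the total-variation bound of the proposition by this weight gives
\[
\bigl\|\Pj_{\sigma,x}(X_t\in\cdot;\,1\leq\tau<t)-\Pj_{\sigma,x}(1\leq\tau<t)\,\muin\bigr\|_{\tv}=o(1)
\]
uniformly in $(\sigma,x)$ and $t\geq 2$ (the case $t=1$ being trivial, as the middle piece vanishes). For the third piece, the key observation is that $\tau=t$ simply means $J_t=1$, which is independent of everything strictly before time $t$, and a regeneration step leaves the walker in place, so $X_t=X_{t-1}$ on this event. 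Consequently
\[
\Pj_{\sigma,x}(X_t\in\cdot;\,\tau=t)=\alpha\,\Pj_{\sigma,x}(X_{t-1}\in\cdot),
\]
which is a signed measure of total mass $\alpha$ and therefore lies within $\alpha=o(1)$ in total variation of $\alpha\,\muin$, without requiring any finer information on $\Pj_{\sigma,x}(X_{t-1}\in\cdot)$.

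Combining the three contributions and using $\Pj_{\sigma,x}(1\leq\tau<t)+\alpha=1-(1-\alpha)^t$, I would conclude that
\[
\Pj_{\sigma,x}(X_t\in\cdot)=(1-\alpha)^t P_\sigma^t(x,\cdot)+\bigl(1-(1-\alpha)^t\bigr)\muin+E_{t,\sigma,x},\qquad\|E_{t,\sigma,x}\|_{\tv}=o(1),
\]
uniformly in $(\sigma,x,t)$. Subtracting $\muin$ from both sides produces $(1-\alpha)^t\bigl(P_\sigma^t(x,\cdot)-\muin\bigr)+E_{t,\sigma,x}$ on the right, so two applications of the triangle inequality in total variation yield the claimed asymptotic identity
\[
\|\Pj_{\sigma,x}(X_t\in\cdot)-\muin\|_{\tv}=(1-\alpha)^t\|P_\sigma^t(x,\cdot)-\muin\|_{\tv}+o(1).
\]

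The only conceptually subtle step is the $\tau=t$ contribution, since conditional on $\tau=t$ the law of $X_t$ is the full \emph{unconditional} law of $X_{t-1}$—something one might otherwise need to unravel inductively in $t$. What saves us is the prefactor $\alpha\to 0$, which collapses that term to a trivial estimate and thereby avoids the apparent recursion.
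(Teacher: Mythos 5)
Your proof is correct and follows the same overall route as the paper: decompose $\Pj_{\sigma,x}(X_t\in\cdot)$ according to $\tau=0$, $1\leq\tau<t$, $\tau=t$, apply Proposition~\ref{pr:first-refresh} to the middle piece, and recombine. The genuine difference lies in the $\tau=t$ term. With $\tau$ as defined in \eqref{eq:rts} (the \emph{last} regeneration time), $\{\tau=t\}=\{J_t=1\}$ places no constraint on $J_1,\dots,J_{t-1}$, so $\Pj_{\sigma,x}(\tau=t)=\alpha$ and $\Pj_{\sigma,x}(X_t=\cdot\mid\tau=t)=\Pj_{\sigma,x}(X_{t-1}=\cdot)$, which is \emph{not} $P_\sigma^{t-1}(x,\cdot)$ in general. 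The paper's \eqref{eq:ob} and the line $\Pj_{\sigma,x}(X_t=\cdot\mid\tau=t)=P_\sigma^{t-1}(x,\cdot)$ are the formulas that would hold if $\tau$ were the \emph{first} regeneration time; the mismatch is $O(\alpha)=o(1)$ and therefore washes out, but it is still a small inaccuracy in the published argument. Your observation that $\Pj_{\sigma,x}(X_t\in\cdot;\tau=t)=\alpha\,\Pj_{\sigma,x}(X_{t-1}\in\cdot)$ has total mass $\alpha=o(1)$ sidesteps this entirely: you never need to identify the conditional law at all, and the weight identity $\Pj_{\sigma,x}(1\leq\tau<t)+\alpha=1-(1-\alpha)^t$ then produces the correct $\muin$-coefficient. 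In short, same decomposition, but a cleaner handling of the last-step contribution than what the paper actually wrote.
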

\begin{proof} 
Note that 
$$\Pj_{\sigma,x}(X_t=\cdot\:|\:\tau=0)=P_\sigma^t(x,\cdot)\,,\qquad \Pj_{\sigma,x}(X_t=\cdot\:|\:\tau=t)=P_\sigma^{t-1}(x,\cdot).$$
Using Proposition \ref{pr:first-refresh}, and $\a\to 0$, the triangle inequality shows that
	\begin{align*}
&	\|\Pj_{\sigma,x}(X_t=\cdot)-\muin\|_{\tv}\\&\quad =	
	\|(1-\a)^{t-1}\Pj_{\sigma,x}(X_t=\cdot\:|\:\tau\in\{0,t\})+(1-(1-\a)^{t-1})\Pj_{\sigma,x}(X_t=\cdot\:|\:1\leq \tau<t )-\muin\|_{\tv}\\&\quad
	=(1-\alpha)^{t-1}\|\Pj_{\sigma,x}(X_t=\cdot\:|\:\tau\in\{0,t\})-\muin\|_{\tv}+o(1)\\& \quad
	= (1-\alpha)^{t-1}\|(1-\alpha)P_\sigma^t(x,\cdot) + \alpha P_\sigma^{t-1}(x,\cdot)-\muin\|_{\tv}+o(1)
	\\& \quad
	= (1-\alpha)^{t}\|P_\sigma^t(x,\cdot) -\muin\|_{\tv}+o(1).
	\qedhere\end{align*}
	\end{proof}
\bigskip

All statements in Theorem \ref{th:trichotomy-rw} follow from Corollary \ref{co:trichotomy-rw} provided we establish the next lemma. 
\begin{lemma}\label{le:1-eps}
	If $t=\beta T_\ent$ then for any fixed $\beta>0$,
	$\beta\neq 1$:
	\begin{equation}
			\max_{x\in[n]}\left|\|P_\sigma^t(x,\cdot) -\muin\|_{\tv}-\varphi(\beta )\right|\overset{\P}{\longrightarrow}0,
			\end{equation}
			where $\varphi(\beta)=1$ if $\beta<1$ and $\varphi(\beta)=q$ for $\beta>1$, and $q$ is defined in \eqref{eq:dista}. 
\end{lemma}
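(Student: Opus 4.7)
The plan is to split according to the sign of $\beta-1$, as the two regimes call for quite different arguments. In the regime $\beta>1$ the static cutoff theorem reduces the claim to a self-averaging statement for the stationary distribution, while in the regime $\beta<1$ the walk has not yet had time to spread out, so its distribution remains singular to $\muin$.

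\textbf{Case $\beta>1$.} Theorem~\ref{th:BCS} gives $\max_{x\in[n]}\|P_\sigma^t(x,\cdot)-\pi_\sigma\|_\tv \overset{\P}{\longrightarrow} 0$, so by the reverse triangle inequality
\begin{equation*}
\max_{x\in[n]}\bigl|\|P_\sigma^t(x,\cdot)-\muin\|_\tv - \|\pi_\sigma-\muin\|_\tv\bigr| \overset{\P}{\longrightarrow} 0.
\end{equation*}
It therefore suffices to show the self-averaging statement $\|\pi_\sigma-\muin\|_\tv \overset{\P}{\longrightarrow} q$. Since $q=\bbE\|\pi_\sigma-\muin\|_\tv$ by definition, this reduces to proving that the variance of the functional $\sigma \mapsto \|\pi_\sigma-\muin\|_\tv = \tfrac12\sum_x |\pi_\sigma(x)-\muin(x)|$ tends to $0$. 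I would extract this from the local in-neighbourhood approximations of $\pi_\sigma$ established in the static setting in \cite{BCS1,CQ:PageRank} (following \cite{Liu1,Liu4}): once each $\pi_\sigma(x)$ is shown to be well approximated in $L^1$ by a function of a bounded-depth in-neighbourhood of $x$, the sum becomes a sum of weakly dependent, locally determined terms and concentrates around its mean.

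\textbf{Case $\beta<1$.} Here I would show that $\|P_\sigma^t(x,\cdot)-\muin\|_\tv = 1-o(1)$ w.h.p.\ uniformly in $x$, by exhibiting a sublinear set on which $P_\sigma^t(x,\cdot)$ concentrates while $\muin$ does not. Fix $\delta>0$ with $\beta(1+\delta)<1$ and set
\begin{equation*}
U_x = \bigl\{y\in[n]:\, P_\sigma^t(x,y) \geq e^{-(1+\delta)Ht}\bigr\}.
\end{equation*}
Bounding the total mass from below gives $|U_x| \leq e^{(1+\delta)Ht} = n^{\beta(1+\delta)} = o(n)$, and the degree bounds \eqref{degs}--\eqref{degs+} give $\muin(y)=O(1/n)$, so $\muin(U_x) = o(1)$. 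The entropic estimates underlying the proof of Theorem~\ref{th:BCS}, which are also redeveloped for Theorem~\ref{th:2cutoff} (see \eqref{eq:notas}--\eqref{quxs} in Section~\ref{sec:2cutoff}), show that w.h.p.\ uniformly in $x$ one has $P_\sigma^t(x,U_x) = 1-o(1)$. Combining,
\begin{equation*}
\|P_\sigma^t(x,\cdot)-\muin\|_\tv \geq P_\sigma^t(x,U_x) - \muin(U_x) = 1-o(1),
\end{equation*}
and the trivial bound $\leq 1$ closes the case.

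\textbf{Main obstacle.} The delicate point is the self-averaging claim in Case $\beta>1$. The stationary distribution $\pi_\sigma$ is not explicit, so identifying the limit of $\|\pi_\sigma-\muin\|_\tv$ with $q$ cannot be done by a one-line Lipschitz/McDiarmid argument on $\sigma$, and instead leans on the local weak limit / message-passing approximations for $\pi_\sigma$ developed for the static model. This is a static-regime result, so it enters the proof here as a black box. Everything else follows transparently from Theorem~\ref{th:BCS} together with the entropic mass-concentration estimates used for Theorem~\ref{th:2cutoff}.
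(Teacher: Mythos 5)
Your proposal is correct and follows essentially the same route as the paper: reduce the $\beta>1$ case via Theorem~\ref{th:BCS} and the triangle inequality to the concentration $\|\pi_\sigma-\muin\|_\tv\overset{\P}{\to}q$ (which the paper cites directly from \cite[Lemma 17]{BCS1} rather than re-deriving it), and for $\beta<1$ exhibit a sublinear set $U_x$ carrying almost all of $P_\sigma^t(x,\cdot)$ via the entropic lower bound of Lemma~\ref{le:lln} with $s=0$. The only cosmetic difference is that you sketch a proof of the concentration step via local-neighbourhood approximations, whereas the paper simply invokes the existing static-model lemma.
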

\begin{proof}
 From Theorem \ref{th:BCS} it is sufficient to show that 
if $t= \beta\tent$ with $\beta<1$, then 
  for any $\e>0$ w.h.p.\
  \begin{gather}\label{eq:gath}
  \min_{x\in[n]}\left\|P^t_\eta(x,\cdot)-\muin\right\|_{\tv}\ge 1-\varepsilon,
  \end{gather}
  and that 
   \begin{gather}\label{eq:gath2}
q- \|\pi_\eta-\muin\|_{\tv}
 \overset{\P}{\longrightarrow}0.
  \end{gather}
  The concentration \eqref{eq:gath2} has been already proved in \cite[Lemma 17]{BCS1} for the DCM. A similar proof applies to the OCM. 
Concerning the estimate \eqref{eq:gath}, we can use Lemma \ref{le:lln} below (taking $s=0$) to show that if $t= \beta\tent$ with $\beta<1$ then there exists a set $U_x\subset[n]$ with $|U_x|=o(n)$ such that w.h.p.\
 $$P_\eta^{t}(x,U_x)\ge 1- o(1),$$
see  \eqref{quxs} for more details. Since $\muin(U_x)=o(1)$, this ends the proof. 
\end{proof}

\section{Cutoff in double digraphs}\label{sec:2cutoff}
We start by showing that 
w.h.p.\ the stationary distribution of a random digraph in any of the two models is a widespread measure.
	
	\subsection{The stationary distribution is widespread}
	\begin{lemma}\label{le:widespread-pi}
	There exists a constant $C= C(\Delta)>0$ such that
		$$\lim_{n\to\infty}\P\(n\sum_{z\in[n]} \pi_\si(z)^2\le C \)=1.$$
	\end{lemma}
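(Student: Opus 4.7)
The strategy is to reduce the desired $\ell^2$ bound on $\pi_\si$ to an annealed second-moment computation for the walk started at $\muin$, leveraging the total variation convergence $\muin P_\si^t \to \pi_\si$ guaranteed by Lemma~\ref{le:approx}. Observe first that $\muin$ is itself widespread, since the assumptions $d^\pm_x = O(1)$ imply $\muin(x) = O(1/n)$. Hence, for $\sigma$ in a set $\Omega_n \subset \cC$ of $\bu$-measure tending to $1$, one has $\|\muin P_\si^t - \pi_\si\|_\tv \to 0$ as $t\to\infty$. For each such $\si$, the bound $\pi_\si(y)\le 1$ gives $\|\pi_\si\|_2^2 = \lim_{t\to\infty}\langle \muin P_\si^t, \pi_\si\rangle$, and Cauchy--Schwarz then yields
\[
\|\pi_\si\|_2 \le \liminf_{t\to\infty}\|\muin P_\si^t\|_2.
\]
Thus it suffices to exhibit a sequence $t=t(n)\to\infty$ along which $\|\muin P_\si^t\|_2^2 \le C/n$ with high probability.

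The crucial identity is
\[
\|\muin P_\si^t\|_2^2 = \mathbf{P}^\si_{\muin\otimes\muin}(X_t = Y_t),
\]
the coincidence probability at time $t$ of two walks starting from independent $\muin$-samples on the shared environment $\si$. Averaging over $\si$ gives $\mathbb{E}[\|\muin P_\si^t\|_2^2] = \Pan_{\muin\otimes\muin}(X_t = Y_t)$. For $t=O(\log n)$ I would apply the same reveal-as-you-go analysis of DCM/OCM underlying the proof of Lemma~\ref{le:general}: the joint exploration uses $O(t)$ fresh tails (resp.\ injective images in model~2) whose endpoints are pairwise distinct and disjoint from the already visited vertices with probability $1-O(t^2/n)$. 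On this typical event each new head is approximately $\muin$-distributed and conditionally independent of the other walk, so $\Pan(X_t=Y_t) \le \sum_z \muin(z)^2(1+o(1)) = O(1/n)$, while on the complement a direct per-step bound contributes only $O(t^2/n^2)$. Combining this with Fatou's lemma applied to the inequality above yields $\mathbb{E}[\|\pi_\si\|_2^2] \le C_1/n$.

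The remaining step is to upgrade from the expectation bound to the required high-probability statement. I would do this by estimating the fourth moment
\[
\mathbb{E}[\|\pi_\si\|_2^4] = \sum_{z,z'\in[n]} \mathbb{E}[\pi_\si(z)^2 \pi_\si(z')^2]
\]
through the four-walk analogue of the previous identity, representing each summand as the limiting joint coincidence probability of two independent pairs of walks on $\si$. For typical $(z,z')$ the depth-$O(\log n)$ in-neighbourhoods of $z$ and $z'$ are disjoint and couple to two independent Galton--Watson trees, and so the coincidence events of the two pairs decorrelate, yielding $\mathbb{E}[\pi_\si(z)^2\pi_\si(z')^2] = \mathbb{E}[\pi_\si(z)^2]\mathbb{E}[\pi_\si(z')^2](1+o(1))$ uniformly in such pairs. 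Summing, one obtains $\mathrm{Var}(n\|\pi_\si\|_2^2)=o(1)$, and Chebyshev's inequality closes the argument. The main obstacle is precisely this quantitative decorrelation of the two-pair coincidence events under the shared DCM/OCM measure, which rests on the same local tree-approximation machinery already employed in the proofs of Theorem~\ref{th:BCS} and Lemma~\ref{le:approx} in \cite{BCS1,BCS2,CQ:PageRank}.
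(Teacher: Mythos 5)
Your approach shares the same underlying idea as the paper's --- control $\pi_\si$ via the law of the walk at a logarithmic time, represent $\ell^2$-moments of $\pi_\si$ as annealed collision probabilities, and conclude by a Markov-type argument --- but there are two gaps, one fixable and one more serious.

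The fixable gap is in the reduction from $\pi_\si$ to $\muin P_\si^t$. For fixed typical $\si$ you obtain $\|\pi_\si\|_2 \le \liminf_{t\to\infty}\|\muin P_\si^t\|_2$ (which, by finiteness of the state space, is in fact an equality, since TV convergence here implies $\ell^2$ convergence), and then assert it suffices to exhibit a single sequence $t(n)\to\infty$ along which $\|\muin P_\si^{t(n)}\|_2^2 \le C/n$ w.h.p. But the $\liminf$ is over $t$ with $n$ fixed; control at one time $t(n)$ does not bound the limit $t\to\infty$. What is needed is a pointwise error bound such as $\max_{x,z}|P_\si^t(x,z)-\pi_\si(z)| = o(n^{-3})$ (the event $\cD$ in the paper, a consequence of Theorem~\ref{th:BCS}), after which the swap $\pi_\si \leftrightarrow P_\si^t$ is licit. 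Lemma~\ref{le:approx} only gives TV control and does not suffice for an $\ell^2$ estimate without this extra input.

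The substantive gap is in the concentration step. A Chebyshev argument needs $\var(n\|\pi_\si\|_2^2)=o(1)$, i.e.\ in the annealed language $\Pan_{unif}(B_2) = \Pan_{unif}(B_1)^2(1+o(1))$ with two pairs of walks. This is a \emph{two-sided} decorrelation with $o(1)$ precision in the multiplicative constant. The paper needs only the \emph{one-sided} bound $\Pan_{unif}(B_{k+1}\,|\,\Xi_k)\le 2\Delta^2/n$ because it takes the $K$-th moment with $K=\log n$: then $\P(Z>C;\cD)\le(4\Delta^2/C)^K\to 0$ once $C>4\Delta^2$, regardless of whether $2\Delta^2$ matches the leading constant in $\Pan_{unif}(B_1)$. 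The two-sided version is genuinely harder: the events where the second pair of walks collides with the trace of the first pair have probability $O(\log^2 n/n)$, which already dominates $\Pan_{unif}(B_1)=\Theta(1/n)$, so you would have to show that \emph{conditionally on such a collision} the coincidence probability is still $O(n^{-1+o(1)})$ and that these events contribute $o(n^{-2})$ in total --- an additional, nontrivial layer of analysis that the one-sided route bypasses entirely. You flag precisely this decorrelation as the "main obstacle" without resolving it; that is exactly the point at which the proposal and the paper diverge, and the high-moment route is the cleaner one here (it also yields the polynomial-in-$n$ failure probability that the paper reuses, e.g.\ via the union bound in Lemma~\ref{le:max-pi}).
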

	\begin{proof}
		Call $Z=n\sum_{z\in[n]} \pi_\si(z)^2$. Let $t=\log^3(n)$ and consider the event 
		\begin{equation}\label{eq:def-D}
		\cD=\left\{\si\in\cC:\,\max_{x,z\in[n]}|\pi_\si(z)-P_\si^t(x,z)|=o(n^{-3})\right\}.
		\end{equation}
		A simple consequence of Theorem \ref{th:BCS} (see \cite[Lemma 3.11]{CQ:cover}) is that $\P(\cD)=1-o(1)$. Therefore,
		$$\P(Z>C)= \P(Z>C;\cD)+o(1)
		.$$
		By Markov's inequality
		$$\P\(Z>C;\:\cD \)\le \frac{\E[Z^K\ind_\cD]}{C^K},\qquad\forall K\ge 1.$$
		Therefore, it is sufficient to show that $\E[Z^K\ind_{\cD}]\le (C/2)^K$ for some $K=K(n)\to\infty$. Choose for example $K=\log n$. Then,
		\begin{align*}
		\E[Z^K\ind_\cD]
		\le& n^K\E\[\(\sum_{z\in[n]}\sum_{x\in[n]}\sum_{y\in[n]}\frac{1}{n^2}\(P^t_\si(x,z)+o(n^{-3})\)\(P^t_\si(y,z)+o(n^{-3})\)\)^K\]\\
		\le& n^K\(\E\[\(o(n^{-1})+\sum_{z\in[n]}\sum_{x\in[n]}\sum_{y\in[n]}\frac{1}{n^2}P^t_\si(x,z)P^t_\si(y,z)\)^K\] \)\\
		\le& (2n)^K\(\E\[\(\sum_{z\in[n]}\sum_{x\in[n]}\sum_{y\in[n]}\frac{1}{n^2}P^t_\si(x,z)P^t_\si(y,z)\)^K\] \)+o(1)\\
		=&(2n)^K\Pan_{ unif}\(X^{(\ell)}_t=Y^{(\ell)}_t,\:\forall \ell\le K\)+o(1),
		\end{align*}
		where $\Pan_{unif}$ denotes the annealed law of the $2K$ independent walks $(X^{(k)}_s,Y^{(k)}_s)_{ s\le t}$ for $ k\le K$, each starting at a uniformly random vertex:
		\begin{equation}\label{eq:anno}
\Pan_{unif}=\frac{1}{n^{2K}}\sum_{x_1,\dots,x_K}\sum_{y_1,\dots,y_K}\sum_{\eta\in\cC}\bu(\eta) \mathbf{P}^\eta_{x_1}\cdots\mathbf{P}^\eta_{x_K}\mathbf{P}^\eta_{y_1}\cdots\mathbf{P}^\eta_{y_K}.
\end{equation}	
 For an explicit construction, we can generate recursively the walks and the environment, letting the trajectories reveal the configuration $\eta$, the $\ell$-th trajectory living in the environment discovered by the previous $\ell-1$ trajectories; see \cite[Lemma 3.11]{CQ:cover} for  more details.
		Therefore, it is sufficient to show that it is possible to find a constant $C>0$ such that for every sufficiently large $n$
		\begin{equation}\label{eq:pan}
		\Pan_{ unif}\(X^{(\ell)}_t=Y^{(\ell)}_t,\:\forall \ell\le K\)\le \(\frac{C}{4n}\)^K.
		\end{equation}
		For $k=1,\dots,K$, define the events
		\begin{align}\label{eq:def-bk}
		B_k= \bigcap_{\ell\le k} \{ X_t^{(\ell)}=Y_t^{(\ell)} \},
		\end{align}
		and call $A_k$ the set of vertices which have at least one tail/head revealed by the trajectories $\(X^{(\ell)},Y^{(\ell)}\)_{\ell\le k}$.
	Let $\Xi_k$ denote a realization of the trajectories $(X^{(\ell)}_s,Y^{(\ell)}_s)_{s\le t, \ell\le k}$ satisfying $B_k$. We are going to prove that, uniformly in the realization $\Xi_k$ and $k\leq K$, 
		\begin{align}\label{eq:bk}
		\Pan_{unif}(B_{k+1}\:|\: \Xi_k)
		=&\sum_{z\in[n]}\sum_{x\in[n]}\sum_{y\in[n]}\frac{1}{n^2}\Pan_{x,y}\(X^{(k+1)}_t=Y^{(k+1)}_t=z \:|\: \Xi_k\) = O(1/n).
		\end{align}

		We first show that if $x$, $y$, $z$ are three distinct vertices all in $A_k^c=[n]\setminus A_k$ then, uniformly in $\Xi_k$, 
		\begin{align}\label{eq:akk}
		\Pan_{x,y}\(X^{(k+1)}_t=Y^{(k+1)}_t=z\:\rvert\Xi_k\)=O\(\frac{1}{n^2}\).
		\end{align}
		Consider the event $\cE_k$ that the trajectory $X^{(k)}=\{X_s^{(k)},\,0\le s\le t\}$ has no collision with itself nor with the environment previously discovered by $X^{(1)},Y^{(1)}\dots,X^{(k-1)},Y^{(k-1)}$, and let $\cY_k$ denote the event that the walk $X^{(k)}$ does not visit $y$. At any given time, any given walk has probability at most $\Delta/(m-Kt)=O(1/n)$ of hitting a given vertex by generating a fresh new edge.   Thus, by a union bound, the event $\cE_k^c\cup\cY_k^c $ has probability $O(Kt^2/n)$ uniformly in $k\le K$.
		
		We prove \eqref{eq:akk} by decomposing the event $X^{(k+1)}_t=Y^{(k+1)}_t=z$ along the four cases: $\cE_k\cap\cY_k,\cE_k^c\cap\cY_k,\cE_k\cap\cY_k^c, \cE_k^c\cap\cY_k^c$. 
		Consider first the case $\cE_k^c\cap\cY_k^c$. 
						 The probability of $\cE_k^c$ cannot exceed $O(Kt^2/n)$. Moreover, the probability of visiting $y\in A_k^c$ and $z\in A_k^c$ can each be bounded by $O(t/n)$. Thus, 			 $$\Pan_{x,y}\(X^{(k+1)}_t=z;\cE_{k+1}^c;\cY_{k+1}^c \:|\: \Xi_k\)=O(t/n)O(t/n)O(Kt^2/n)=o(n^{-2}).$$
		Similarly, 
			$$\Pan_{x,y}\(X^{(k+1)}_t=Y_t^{(k+1)}=z;\cE_{k+1}^c;\cY_{k+1} \:|\: \Xi_k\)=O(Kt^2/n)O(t/n)O(Kt^2/n)=o(n^{-2}).$$
			Indeed, the walk $X^{(k+1)}$ must visit $z\in A_k^c$ and also one of the previously discovered vertices, which has probability $O(Kt^2/n)\times O(t/n)$, and, if $\cY_{k+1}$ holds, in order for the walk $Y^{(k+1)}$ to arrive in $z$ at time $t$ it is necessary to visit a vertex that was already discovered (e.g., $z$ itself). The latter event has probability $O\(Kt^2/n \)$.
		
		If  $\cE_{k+1}\cap\cY_{k+1}$ holds, then for $X^{(k+1)}_t = z,Y^{(k+1)}_t = z$ to occur there must be a time $s\le t$ such that $Y^{(k+1)}$ collides at time $s$ with the trajectory of $X^{(k+1)}$, then $Y^{(k+1)}$ stays on this trajectory for $t-s$ units of time, and then finally hits $z$ at time $t$. On the event $\cE_{k+1}$ the probability of $X^{(k+1)}_t = z$ is bounded by $\frac{d_z^-}{m}(1+o(1))$, and the event that $Y^{(k+1)}$ spends $h$ units of time in the path $X^{(k+1)}$ is at most $2^{-h}$. Therefore,
			\begin{align*}\label{eq:constantC'}
			\Pan_{x,y}\(X^{(k+1)}_t=Y^{(k+1)}_t=z;\cE_{k+1};\cY_{k+1}\:\rvert\:\Xi_k\)\le\frac{d_z^-}{m}(1+o(1))\cdot \frac{2\Delta}{m}\sum_{h=1}^t 2^{-h}\le \frac{\Delta^2}{n^2}.
			\end{align*}
		
	Finally, if $x\not=y$, then the event $\cE_{k+1}\cap \{ X^{(k+1)}_t=z \}\cap \cY_{k+1}^c$ has probability $O(t/n)\times O(1/n)$.  Under this event, when the walk $Y^{(k+1)}$ starts at $y$ the revealed in-neighborhood of $z$ consist of a unique path of length $t$ from $x$ to $z$ and $y$ is a vertex in this path. Since $y\neq x$, 
		to achieve $Y^{(k+1)}=z$ it is necessary that $Y^{(k+1)}$ exits and re-enters the path. 
		This requires hitting the path by creating a fresh edge, which has probability  $O(Kt^2/n)$. 
Hence,			$$\Pan_{x,y}\(X^{(k+1)}_t=Y^{(k+1)}_t=z;\cE_{k+1};\cY^c_{k+1} \:|\: \Xi_k\)=O(t/n)O(1/n)O(Kt^2/n)=o(n^{-2}).$$
		In conclusion, we have proved \eqref{eq:akk}. In particular, we have obtained 
		\begin{equation}\label{eq:boundan1}
	\sum_{z\in A^c_k}\sum_{x\in A^c_k\setminus z}\sum_{y\in A^c_k\setminus z,x}\frac{1}{n^2}\Pan_{x,y}\(X^{(k+1)}_t=Y^{(k+1)}_t=z \:|\:\Xi_k\)
	\le  n^3\frac{1}{n^2}\frac{\Delta^2}{n^2}= \frac{\Delta^2}{n}.
		\end{equation}
		We now deal with the probability
		$$\Pan_{x,y}\(X^{(k+1)}_t=Y^{(k+1)}_t\:|\:\Xi_k \), $$
		when $x\in A_k$ and $y\in A_k^c$ or vice versa. By symmetry we can restrict to the former case.
		  We observe that 
		\begin{equation}\label{412}
		\Pan_{x,y}\(X^{(k+1)}_t=Y^{(k+1)}_t \:|\:\Xi_k\) =O\( \frac{Kt^2}{n}\).
		\end{equation}
		Indeed, 
		\begin{equation}
		\Pan_{x,y}\(X^{(k+1)}_t=Y^{(k+1)}_t;\cY_{k+1} \:|\:\Xi_k\) =O\( \frac{Kt^2}{n}\),
		\end{equation}
		since the latter event requires that the walk $Y^{(k+1)}_t$ visits a vertex that has been already discovered by $X^{(1)},Y^{(1)},\dots,X^{(k)},Y^{(k)},X^{(k+1)}$, while
		\begin{equation}
		\Pan_{x,y}\(\cY_{k+1}^c\:|\:\Xi_k \) =O\( \frac{t}{n}\).
		\end{equation}
			Hence, using $|A_k|\leq Kt$,
			\begin{equation}\label{eq:bound2an}
			\sum_{x\in A_k}\sum_{y\in A_k^c}\frac{1}{n^2}\Pan_{x,y}\(X_t^{(k+1)}=Y_t^{(k+1)}\:|\:\Xi_k \)\le \frac{Kt\cdot n }{n^2}\times O\(\frac{K t^2}{n}\)=o(n^{-1}).
			\end{equation}
%
	The case $x=y$ in \eqref{eq:bk} is handled by the obvious bound 
	$$\Pan_{x,y}\(X^{(k+1)}_t=Y^{(k+1)}_t\:|\: \Xi_k \)\le1.$$		
	The same can be done for the case 
	$x\in A_k$ and $y\in A_k$.
 Indeed,	$|A_k|\leq Kt$ implies 
				\begin{equation}\label{eq:bound3an}
				\sum_{x\in A_k}\sum_{y\in A_k}\frac{1}{n^2}\Pan_{x,y}\(X^{(k+1)}_t=Y^{(k+1)}_t\:|\: \Xi_k \)\le \frac{K^2t^2}{n^2}=o(n^{-1}).
				\end{equation}
				Finally, if $z\in A_k$ and $x,y\in A_k^c$, $x\neq y$ then 
			\begin{equation*}
			\Pan_{x,y}\(X^{(k+1)}_t=Y^{(k+1)}_t\in A_k;\:\cY_{k+1}\:|\: \Xi_k \)=O\(\frac{K^2t^4}{n^2} \),
			\end{equation*}
			since both walks have to hit the cluster $A_k$ in order to visit $z$. On the other hand 
			\begin{equation*}
			\Pan_{x,y}\(X^{(k+1)}_t=Y^{(k+1)}_t\in A_k;\:\cY_{k+1}^c\:|\: \Xi_k \)=O\(\frac{Kt^2}{n}\)O\(\frac{t}{n} \),
			\end{equation*}
			since $X^{(k+1)}$ needs to visit both  $y$ and the cluster $A_k$. Hence
			\begin{equation}\label{eq:bound4an}
			\sum_{x\in A_k^c}\sum_{y\in A_k^c}\frac{1}{n^2}\Pan_{x,y}\(X^{(k+1)}_t=Y^{(k+1)}_t\in A_k\:|\:\Xi_k \)=O\(\frac{K^2t^4}{n^2} \)=o(n^{-1}).
			\end{equation}
		Therefore, putting together the bounds \eqref{eq:boundan1}, \eqref{eq:bound2an}, 
		\eqref{eq:bound3an} and \eqref{eq:bound4an}, and recaling  \eqref{eq:bk}, 
		 we have shown that 
		\begin{align}\label{eq:bk2}
		\nonumber\Pan_{unif}(B_{k+1} \:|\:\Xi_k)	\le\frac{\Delta^2}{n}+ o\(n^{-1}\)\le \frac{2 \Delta^2}{n}.
		\end{align}
		The same proof shows that 
		$$
		\Pan_{unif}\(B_1\) \leq \frac{2 \Delta^2}{n}.
		$$
		By the uniformity in $k\le K$ and in $\Xi_k$ of the previous argument, we conclude that
		$$\Pan_{unif}\(B_K \)=\Pan_{unif}\(B_1\)\prod_{k=1}^{K-1}\Pan_{unif}\(B_{k+1}\:|\: B_k \)\le \(\frac{2\Delta^2}{n}\)^K.$$
		Therefore it is sufficient to choose e.g.\ $C=8\Delta^2$ to conclude that \eqref{eq:pan} holds.	
	\end{proof}

	\begin{lemma}\label{le:max-pi} We have 
\begin{equation}\label{eq:422}
	\lim_{n\to\infty}\P\(\max_{z\in[n]}\pi_\si(z)\le\frac{\log^8(n)}{n} \)=1.		\end{equation}
	\end{lemma}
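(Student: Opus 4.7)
The plan is to derive a high-moment bound on $W_K := \sum_{z \in [n]} \pi_\sigma(z)^K$ for $K = \lceil \log n \rceil$ and combine it with a union bound over $z$. This is a natural extension of the argument used in Lemma \ref{le:widespread-pi}, where only the case $K = 2$ was needed.

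First I would work on the event $\cD$ of \eqref{eq:def-D} with $t = \log^3 n$, strengthening the error so that $\max_{x,z}|P^t_\sigma(x,z) - \pi_\sigma(z)| = o(n^{-A})$ for some sufficiently large constant $A$ (this follows from Theorem \ref{th:BCS} together with standard quantitative tail estimates on the total variation profile, as in \cite[Lemma 3.11]{CQ:cover}). On $\cD$, averaging over a uniform start gives $\pi_\sigma(z) \leq \tfrac{1}{n}\sum_x P^t_\sigma(x,z) + o(n^{-A})$ uniformly in $z$. Using $(a+b)^K \leq 2^{K-1}(a^K + b^K)$ and summing over $z$,
\begin{equation*}
\bbE[W_K \ind_\cD] \leq 2^K n^{-K} \sum_z \sum_{x_1,\dots,x_K} \bbE\Bigl[\prod_{\ell=1}^K P^t_\sigma(x_\ell, z)\Bigr] + 2^K n^{1-AK}.
\end{equation*}
The double sum equals $n^K \Pan_{unif^K}(X^{(1)}_t = \cdots = X^{(K)}_t)$, where the $K$ walks are independent in the common annealed environment and each starts from an independent uniform vertex.

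The core step is the annealed estimate
\begin{equation*}
\Pan_{unif^K}\bigl(X^{(1)}_t = \cdots = X^{(K)}_t\bigr) \leq (C/n)^{K-1},
\end{equation*}
for some $C = C(\Delta)$. I would prove this by sequentially generating the walks, exactly in the spirit of the proof of Lemma \ref{le:widespread-pi}. Conditioning on the trajectory of walk $1$, which ends at some $z^* = X^{(1)}_t$, and on the environment $\Xi_{k-1}$ revealed by walks $1,\dots,k-1$, the probability that the $k$-th walk (starting uniformly) also ends at $z^*$ is bounded by $C/n$. The proof mirrors the case analysis leading to \eqref{eq:bk2}: on the event that the new walk avoids the previously revealed environment the probability of ending at $z^*$ is dominated by $d^-_{z^*}/m(1+o(1)) \leq \Delta/n$ via a fresh in-edge; the bad events (collision with $\Xi_{k-1}$ or self-collision) contribute only smaller order after summing over starting vertices, exactly as in the bounds \eqref{eq:boundan1}--\eqref{eq:bound4an}. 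Iterating the conditional factor $C/n$ over $k = 2, \ldots, K$ gives the claim.

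Combining the two steps yields $\bbE[W_K \ind_\cD] \leq 2^K C^{K-1} n^{1-K} + o(1)$, where the error term coming from the approximation on $\cD$ is negligible for $A$ sufficiently large relative to $K$. Since $\max_z \pi_\sigma(z)^K \leq W_K$, Markov's inequality gives
\begin{equation*}
\P\Bigl(\max_z \pi_\sigma(z) > \tfrac{\log^8 n}{n}\Bigr) \leq \P(\cD^c) + \frac{n^K \bbE[W_K \ind_\cD]}{(\log^8 n)^K} \leq o(1) + \frac{n}{C}\left(\frac{2C}{\log^8 n}\right)^K.
\end{equation*}
With $K = \lceil \log n \rceil$, the last factor is $\exp(\log n \cdot (\log(2C) - 8\log\log n))$, which is $n^{-c \log\log n}$ for a positive constant $c$ and large $n$, hence dominates the multiplicative $n$ and the whole expression vanishes. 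The main obstacle is the annealed moment estimate: one has to re-run the careful collision analysis of Lemma \ref{le:widespread-pi}, but now with a single common endpoint for $K$ walks (rather than $K$ pairs with endpoint matching), and verify that the resulting conditional bound is indeed uniform in $k \leq K$ and in the revealed environment $\Xi_{k-1}$ of size $O(Kt) = \mathrm{polylog}(n)$.
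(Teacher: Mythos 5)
Your high-moment/Markov scheme is essentially the paper's, with the minor variation that you sum over $z$ instead of fixing $z$ and union-bounding. However, your key annealed estimate, $\Pan_{unif^K}(X^{(1)}_t=\cdots=X^{(K)}_t)\leq (C/n)^{K-1}$, is stated too strongly and would not follow by \textquotedblleft mirroring\textquotedblright\ the case analysis of Lemma~\ref{le:widespread-pi}. The mechanism that makes the per-step conditional probability $O(1/n)$ in Lemma~\ref{le:widespread-pi} is that each step reveals a \emph{pair} of fresh walks from two independent uniform starts; the cases where a start lands in $A_k$ or a trajectory collides with $\Xi_k$ pick up an extra $1/n$ from the \emph{other} walk and become $o(1/n)$ (compare \eqref{eq:bound2an}--\eqref{eq:bound4an}). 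In your setting only one fresh walk is revealed per step, so those compensations are absent: the event that the fresh walk's uniform start falls in $A_{k-1}$ already has probability $|A_{k-1}|/n=\Theta(Kt/n)$, and even for a start outside $A_{k-1}$ the dominant good scenario (hit, via a fresh edge, the vertex of some revealed path sitting at graph-distance $t-s$ from $z^*$ at exactly time $s$, and then ride that path) contributes $\Theta((k-1)/n)$ once you sum over the $k-1$ revealed paths. So the honest per-step bound is $O(K/n)=O(\log n/n)$, or more crudely $O(Kt^2/n)=O(\log^7n/n)$ (the new walk must hit the revealed cluster at all), but not $O(1/n)$.

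This does not kill the argument: with the crude bound $O(Kt^2/n)$ per step — which is exactly what the paper uses — your final Markov computation still vanishes, because the $\log^8 n$ in the target beats the accumulated $\log^7 n$ per step: $(n/\log^8n)^K\cdot (C\log^7n/n)^{K-1} = O\bigl(n\,(2C)^K/\log^{K+7}n\bigr) = n^{1+O(1)-\log\log n}\to0$ for $K=\log n$. So the correct conclusion is that your scheme works, but the claimed $(C/n)^{K-1}$ should be replaced by $(C\,\mathrm{polylog}(n)/n)^{K-1}$, and the justification via \textquotedblleft exactly as in the bounds \eqref{eq:boundan1}--\eqref{eq:bound4an}\textquotedblright\ is misleading, since that analysis is specific to revealing a pair of walks per step. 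For comparison, the paper's own proof of Lemma~\ref{le:max-pi} sidesteps the more delicate one-walk collision analysis entirely by fixing $z$, using the trivial observation that $X^{(k+1)}_t=z$ forces a hit of the revealed cluster (probability $O(Kt^2/n)$), and closing with a union bound over $z$; this is cleaner than trying to refine the per-step bound to $O(1/n)$.
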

	\begin{proof}
		For the DCM ensemble we may refer to \cite[Theorem 1.5]{CQ:cover} for a much more precise result, where $8$ is replaced by a constant $a\in[0,1]$. We give here an alternative proof of the weaker bound \eqref{eq:422} that holds for the OCM as well. We show that if $t=\log^3(n)$, then
		uniformly in $z\in[n]$
		\begin{equation}\label{eq:514}
		\P\(\sum_{x\in[n]}\frac{1}{n}P^t_\si(x,z)\ge\frac{\log^8(n)}{2n} \)=o(n^{-1}).
		\end{equation}
By the union bound, and the fact that the event $\cD$ in \eqref{eq:def-D} occurs w.h.p., \eqref{eq:514} is sufficient to prove \eqref{eq:422}.	
		Define
		$$ W:= \sum_{x\in[n]}\frac{1}{n}P^t_\si(x,z). $$
		By Markov inequality, for every $K\ge 1$
		$$\P\(W\ge \frac{\log^8(n)}{2n}  \)\le \frac{2^Kn^K}{\log^{8K}(n)}\E\left[W^K\right].$$
		As in the proof of Lemma \ref{le:widespread-pi}, the term in the right hand side of the latter display can be read in terms of the annealed walks. In conclusion, to prove \eqref{eq:514} it is sufficient to show that for $K=\log n$
		\begin{equation}\label{eq:bound-an}
		\E \left[W^K\right]=\Pan_{unif}\(X^{(k)}_t=z,\:\forall k\le K \)\le\(\frac{C\log^7(n)}{n}\)^K,
		\end{equation}
for some constant $C>0$,		where 
$\Pan_{unif}$ denotes the annealed law of $K$ independent walks 
		\begin{equation}\label{eq:annoa}
\Pan_{unif}=\frac{1}{n^{K}}\sum_{x_1,\dots,x_K}\sum_{\eta\in\cC}\bu(\eta) \mathbf{P}^\eta_{x_1}\cdots\mathbf{P}^\eta_{x_K}.
\end{equation}	
%
%
		 Reasoning as in the proof of Lemma \ref{le:widespread-pi}, similarly to \eqref{eq:def-bk} we call $$B_k=\bigcap_{\ell\le k}\{X^{(\ell)}_t=z\}.$$
		The proof is completed by observing that uniformly in $k\le K$,
		$$\Pan_{unif}(B_{k+1}\:|\: B_k)=O\( \frac{Kt^2}{n}\)=O\(\frac{\log^7(n)}{n} \),$$
		which is sufficient to prove \eqref{eq:bound-an}.
		The above estimate simply follows by observing that $X^{(k+1)}_t=z$ implies that $X^{(k+1)}$ hits at some time $s\in[0,t]$ for the first time a vertex already discovered by the walks $ X^{(\ell)}$, $\ell\leq k$. 
	\end{proof}
Lemma \ref{le:widespread-pi} and Lemma \ref{le:max-pi} provide the result mentioned at the beginning of the section.
	\begin{corollary}\label{co:widespread-pi}
	W.h.p. $\pi_\si$ is a widespread measure.
	\end{corollary}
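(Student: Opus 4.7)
The plan is to simply combine the two preceding lemmas, which together verify the two defining properties of a widespread measure as given in \eqref{eq:widesp}. Recall that a sequence of probability measures $\lambda_n$ on $[n]$ is widespread if there exists $\e>0$ such that $\max_{x\in[n]}\lambda_n(x)\leq n^{-1/2-\e}$ and $\limsup_{n\to\infty}\sum_{x\in[n]} n\,\lambda_n(x)^2<\infty$.

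The second condition is handed to us directly by Lemma \ref{le:widespread-pi}: with probability tending to $1$ one has $n\sum_{z\in[n]}\pi_\si(z)^2 \leq C$ for a constant $C=C(\Delta)$, so the $\limsup$ requirement holds w.h.p. For the first condition, Lemma \ref{le:max-pi} gives $\max_{z\in[n]}\pi_\si(z)\leq \log^8(n)/n$ w.h.p., and since $\log^8(n)/n = o(n^{-1/2-\e})$ for any fixed $\e\in(0,1/2)$ (for instance $\e=1/4$), this implies $\max_{z\in[n]}\pi_\si(z)\leq n^{-1/2-\e}$ for all sufficiently large $n$, w.h.p.

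Taking the intersection of the two w.h.p.\ events gives a single event, of probability tending to $1$, on which both conditions of \eqref{eq:widesp} are simultaneously satisfied. Hence $\pi_\si$ is widespread w.h.p. There is no real obstacle here, since the technical content has been absorbed into Lemmas \ref{le:widespread-pi} and \ref{le:max-pi}; the only thing to check is the numerical comparison $\log^8(n)/n \ll n^{-1/2-\e}$, which is immediate.
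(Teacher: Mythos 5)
Your proof is correct and matches the paper's intent exactly: the paper states that Lemmas \ref{le:widespread-pi} and \ref{le:max-pi} together yield the corollary, and your write-up simply spells out the straightforward combination, including the elementary comparison $\log^8(n)/n \ll n^{-1/2-\varepsilon}$.
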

		\subsection{Proof of Theorem \ref{th:2cutoff}} We now turn to the proof of Theorem \ref{th:2cutoff}. Let $\sigma,\eta$ be two independent uniformly random configurations in $\cC$. In this section we will assume that $t=\Theta(\log n)$ and $s\le t$. Let $\mathbf{Q}^{\sigma,\eta}_x\equiv \mathbf{Q}^{\sigma,\eta}_{x,s,t}$ denote the quenched law of the walker that starts at $X_0=x$, goes for $s$ steps trough $\sigma$ and then, starting at $X_s$, goes for $t-s$ steps trough $\eta$. We use the notation
	\begin{equation}\label{eq:notas} Q_s^t(x,y)=\mathbf{Q}^{\sigma,\eta}_x(X_t=y)=Q^{s,t}_{\si,\eta}(x,y)=\sum_{z\in [n]}P_\sigma^s(x,z)P_\eta^{t-s}(z,y)
.\end{equation}

		\begin{definition}
			We define \emph{path of length $t$} an arbitrary sequence of vertices $\p= (v_0,\dots,v_t)$.
			We call weight of the path $\w(\p)$ the product
			$$\w(\p)=\prod_{j=0}^{s-1}P_\si(v_j,v_{j+1})\prod_{i=s}^{t-1}P_\eta(v_i,v_{i+1}).
			$$
		\end{definition}
	\begin{lemma}\label{le:lln}
		If  $t=t(n)=\Theta(\log n)$, 
		then for every $\varepsilon\in(0,1)$
		$$\min_{s\in[0,t]}\min_{x\in[n]}\mathbf{Q}^{\sigma,\eta}_x\(\w(X_0,X_1,\dots,X_t)\in\[e^{-(1+\varepsilon)H t},e^{-(1-\varepsilon)H t}\] \)\overset{\P}{\longrightarrow}1,$$
where the convergence holds in probability with respect to the independent pair $(\si,\eta)$. 	\end{lemma}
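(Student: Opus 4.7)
The plan is to adapt the ``fresh edges'' technique of \cite{BCS1,BCS2} to the two-environment setting. I would build the trajectory $(X_0,\dots,X_t)$ together with the relevant parts of $(\sigma,\eta)$ by sequential exploration: at step $j<s$, from position $X_j$, pick a uniformly random tail of $X_j$ in $\sigma$, and if this tail is fresh pair it with a uniformly random unmatched head of $\sigma$; symmetrically at step $j\geq s$ using $\eta$. Since $\sigma$ and $\eta$ are independent, the two exploration processes are independent. Let $\cG$ be the event that every revealed head belongs to a previously unvisited vertex, so that $X_0,\dots,X_t$ are distinct and every traversed edge has multiplicity one. The crucial observation is that on $\cG$ the weight collapses to
$$
\w(X_0,\dots,X_t) \;=\; \prod_{j=0}^{t-1}\frac{1}{d^+_{X_j}},
$$
independently of the switch time $s$.

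A standard union bound, using $\D=O(1)$ and $m=\Theta(n)$, gives $\Pan_x(\cG^c)=O(t^2/n)$ uniformly in $s,x$: at the $j$-th reveal the probability of matching to one of the already discovered $O(j)$ vertices is $O(j/n)$. On $\cG$, conditional on the past, each newly revealed head is uniform over $m-O(t)$ unmatched heads, so its law is within total variation $O(t/n)$ of $\muin$. Hence $(\log d^+_{X_j})_{0\leq j\leq t-1}$ behaves like a bounded i.i.d.\ sequence with common mean $H=\sum_x \muin(x)\log d^+_x$. A Hoeffding bound then yields
$$
\Pan_x\Bigl(\cG;\;\Bigl|\textstyle\sum_{j=0}^{t-1}\log d^+_{X_j} - tH\Bigr| > \varepsilon tH\Bigr) \;\leq\; e^{-c(\varepsilon)t},
$$
uniformly in $s,x$, and combining with the control of $\cG^c$ we get $\Pan_x(\w\notin [e^{-(1+\varepsilon)Ht},e^{-(1-\varepsilon)Ht}]) = o(1)$ uniformly in $s, x$.

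The delicate step is passing from this annealed estimate to a quenched bound that is uniform in $s\in[0,t]$ and $x\in[n]$. A direct Markov + union bound over the $O(nt)$ pairs $(s,x)$ is wasteful, as the annealed error $O(t^2/n)=O(\log^2 n/n)$ cannot absorb a factor $nt$. Instead, I would exhibit a set $\Omega^*\subset\cC\times\cC$ of $\bbP$-probability $1-o(1)$ consisting of pairs $(\sigma,\eta)$ whose out-neighborhoods up to radius $t$ are tree-like at every vertex; on such pairs $\mathbf{Q}^{\sigma,\eta}_{x,s,t}(\cG)\geq 1-o(1)$ holds simultaneously for all $x$ and $s$. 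The concentration of $\sum_j \log d^+_{X_j}$ on the quenched chain then follows by Azuma's inequality applied to the Doob martingale of the walk, exploiting the $L^\infty$ bounds $\log 2 \leq \log d^+_\cdot \leq \log\D$. The main obstacle is precisely this quenched-uniform control of $\cG$ together with the concentration: a structural tree-like analysis of the two independent configurations is required, replacing the wasteful annealed union bound.
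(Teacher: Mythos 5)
Your high-level approach of rederiving the annealed exploration plus concentration argument differs substantially from what the paper does, and it contains two genuine gaps.

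First, the proposed set $\Omega^*$ of pairs $(\sigma,\eta)$ ``whose out-neighborhoods up to radius $t$ are tree-like at every vertex'' does not exist for $t=\Theta(\log n)$. The out-ball of radius $t$ has cardinality at least $2^t$, which is a polynomial power of $n$; once this exceeds roughly $\sqrt n$, a typical vertex sees collisions in its out-neighborhood, and for $t>T_\ent$ the ball can have more than $n$ vertices, so tree-likeness is impossible by pigeonhole. The paper only uses a tree-like set $S_\star^\sigma$ at the much shorter scale $h=2\upsilon T_\ent$ for small $\upsilon$, and even there only ``most'' vertices are in it, not all. Second, Azuma's inequality on the Doob martingale of the quenched walk concentrates $\sum_j\log d^+_{X_j}$ around the \emph{quenched} mean $\mathbf{Q}^{\sigma,\eta}_x\bigl[\sum_j\log d^+_{X_j}\bigr]$, which is a function of $(\sigma,\eta,x,s)$ and has no a priori reason to be close to $tH$ for an adversarial environment. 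Showing that this quenched mean is close to $tH$ for most $(\sigma,\eta)$ and simultaneously for all $x,s$ is precisely the hard part, and bounded increments alone do not supply it. You correctly diagnose that a naive union bound over $(s,x)$ on the annealed error $O(t^2/n)$ cannot work, but the proposed repair does not close the loop, and you say as much.

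The paper's proof is considerably more economical and avoids both problems. It treats the single-environment statement from \cite[Proposition~8]{BCS1} and \cite[Theorem~4]{BCS2} as a black box: for a \emph{single} environment and time horizon $\Theta(\log n)$, the event
$$\min_{x\in[n]}\mathbf{Q}^\eta_x\Bigl(\w(X_0,\dots,X_t)\in\bigl[e^{-(1+\varepsilon)Ht},e^{-(1-\varepsilon)Ht}\bigr]\Bigr)\geq 1-\delta$$
holds with probability at least $1-n^{-c}$, so the uniformity over $x$ and the polynomial tail (hence union-bound friendliness over the $O(\log n)$ values of $s$) are already built in. Because $\w$ is a \emph{product} of the two segment weights and $\sigma\perp\eta$, one simply applies that result separately to $(X_0,\dots,X_s)$ in $\sigma$ and $(X_s,\dots,X_t)$ in $\eta$ whenever both $s$ and $t-s$ are $\Theta(\log n)$, losing only a factor $2$ in $\varepsilon$ and $\delta$. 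The extreme cases $s\leq at$ or $t-s\leq at$ are handled by the crude deterministic bound $\Delta^{-s}\leq\w(X_0,\dots,X_s)\leq 1$, which contributes an $O(a)$ correction to the exponent; since $a$ is arbitrary, the full range $s\in[0,t]$ follows. I would encourage you to look for the factorization $\w=\w_\sigma\cdot\w_\eta$ plus independence before trying to rerun the full exploration machinery in the two-environment setting.
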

	\begin{proof}
 In \cite[Proposition 8]{BCS1} for the DCM and \cite[Theorem 4]{BCS2} for the OCM it is shown that when $s=0$, and $t=\Theta(\log n)$ one has that for every $\d>0$, the event 
 \begin{equation}\label{eq:unifs}
 \min_{x\in[n]}\mathbf{Q}^{\eta}_x\(\w(X_0,X_1,\dots,X_t)\in\[e^{-(1+\varepsilon)H t},e^{-(1-\varepsilon)H t}\] \)\geq 1-\d,
 \end{equation}
has probability at least $1- n^{-c}$, for some constant $c>0$, and we use the notation $\mathbf{Q}^{\eta}_x$ for the quenched law of the random walk in the single environment $\eta$. We refer to the proof of \cite[Theorem 4]{BCS2} for the details. 
 Since $\w$ is a product
 and the pair $(\si,\eta)$ is independent, we may apply \eqref{eq:unifs} separately for $(X_0,X_1,\dots,X_s)$ in the environment $\si$ and $(X_{s},X_{s+1},\dots,X_t)$ in the environment $\eta$, 
 to obtain
 \begin{equation}\label{eq:unifs2}
 \min_{x\in[n]}\mathbf{Q}^{\si,\eta}_x\(\w(X_0,X_1,\dots,X_t)\in\[e^{-(1+2\varepsilon)H t},e^{-(1-2\varepsilon)H t}\] \)\geq 1-2\d,
 \end{equation}
with probability at least $1- 2n^{-c}$, for some constant $c>0$, provided that both $s$ and $t-s$ are $\Theta(\log n)$.
Taking a union bound over $s$, we obtain that for every fixed constant $a\in(0,1)$,   w.h.p.\ with respect to $(\si,\eta)$
 \begin{equation}\label{eq:unifs2}
 \min_{s\in[a t,(1-a)t]}\min_{x\in[n]}\mathbf{Q}^{\si,\eta}_x\(\w(X_0,X_1,\dots,X_t)\in\[e^{-(1+2\varepsilon)H t},e^{-(1-2\varepsilon)H t}\] \)\geq 1-2\d.
 \end{equation}
 
It remains to consider the case $0\leq s\leq a t$ and the case $0\leq t-s\leq a t$.
	 Notice  that  any path of length $s$ has weight between $\D^{-s}$ and $1$. 
	 Using the fact that $\D^{-s}$ is in the interval $[e^{-a_1 t},1]$, for some $a_1>0$ such that $a_1\to 0$ when $a\to 0$, \eqref{eq:unifs} implies that w.h.p.\ with respect to $(\si,\eta)$ 
	  \begin{equation}\label{eq:unifs3}
 \min_{s\in[0,a t]}\min_{x\in[n]}\mathbf{Q}^{\si,\eta}_x\(\w(X_0,X_1,\dots,X_t)\in\[e^{-(1+\varepsilon')H t},e^{-(1-\varepsilon')H t}\] \)\geq 1-2\d,
 \end{equation}
where $\varepsilon'=\varepsilon+a_1$. Inverting the role of $s$ and $t-s$ the same estimate holds for $s\in[(1-a)t,t]$. 
Since $a$, and therefore $a_1$, is arbitrarily small, the bounds  \eqref{eq:unifs2} and \eqref{eq:unifs3} imply the desired claim. 
	\end{proof}
	
	\begin{proof}[Proof of the Lower Bound of Theorem \ref{th:2cutoff}.] Let $t=(1-\varepsilon) T_\ent$ for some $\varepsilon>0$. Fix any $x\in [n]$ and call $U^s_x$ the set of vertices $y$ such that 
	$$Q_s^t(x,y)\geq e^{-(1+\varepsilon)Ht}=n^{-1+\varepsilon^2}.$$ 
	Let $\cP_{x,y,t}$ denote the set of paths $\p= (v_0,\dots,v_t)$ with  $v_0=x$ and $v_t=y$, so that 
	$$Q_{s}^t(x,y)=\sum_{\p\in\cP_{x,y,t}}\w(\p).$$ For any $\p\in\cP_{x,y,t}$, by definition of $\w(\p)$ we have 
$$
\w(\p)\leq Q_s^t(x,y).
$$
Therefore,  if $\w(\p)\geq n^{-1+\varepsilon^2}$ for some $\p\in\cP_{x,y,t}$ it follows that $y\in U^s_x$, and 
\begin{align}\label{equao2}
\mathbf{Q}^{\sigma,\eta}_x\(\w(X_0,X_1,\dots,X_t)\geq n^{-1+\varepsilon^2} \) &= \sum_{y\in[n]}\sum_{\p\in\cP_{x,y,t}} \w(\p)\ind(\w(\p)\geq n^{-1+\varepsilon^2})\nonumber
\\
&\leq 
\sum_{y\in U_x^s}Q_s^t(x,y).
\end{align}
It follows from Lemma \ref{le:lln}, that  
	 for all $\d>0$, w.h.p.\ with respect to  $(\si,\eta)$
	 \begin{equation}\label{quxs}
	 \min_{s\in[0,t]}\min_{x\in[n]}Q_s^t(x,U^s_x)\geq 1-\d.
	 \end{equation}
	  Moreover,
	$$
	n^{-1+\varepsilon^2}|U^s_x|\leq \sum_{y\in U^s_x} Q_s^t(x,y)\leq 1,$$
	shows that  $\max_{s\in[0,t]}\max_{x\in[n]}|U^s_x|=o(n)$. Since $\pi_\eta$ is w.h.p.\ widespread by Corollary \ref{co:widespread-pi}, from the argument in \eqref{eq:csc} it follows that $\max_{s\in[0,t]}\max_{x\in[n]}\pi_\eta(U^s_x)=o(1)$. Hence, since $\d>0$ is arbitrarily small we conclude that w.h.p.\ uniformly in $x\in[n]$ and $s\in[0,t]$,  the measure $Q_s^t(x,\cdot)$ is asymptotically  singular with respect to $\pi_\eta$. This concludes the proof of \eqref{lowbobo}, the lower bound in Theorem \ref{th:2cutoff}.
	\end{proof}
%

		We now turn to proving \eqref{upbobo}, the upper bound in Theorem \ref{th:2cutoff}, which is more involved. In fact, an adaptation of the arguments of \cite{BCS1,BCS2} is not straightforward in this case. Below we present the details in the case of the DCM only. The proof for the OCM is very similar.
	
	\begin{remark}\label{re:sologn}
		For what concerns the upper bound, we have $t=(1+a)\tent$ with $a=\beta-1 >0$ and  we want an estimate uniformly in $s\in I(\varepsilon,\beta)$. We consider the three regimes $s\in[0,\varepsilon\tent]$, $s\in[\varepsilon\tent, (1-\varepsilon)\tent]$ and $s\in[(1+\varepsilon)\tent, (\beta-\varepsilon)\tent]$. 
		In the first case, if $s\leq \varepsilon t$ then the upper bound in Theorem \ref{th:2cutoff} holds as a consequence of Theorem \ref{th:BCS}. Indeed, in this case we have $t-s\ge (1+a/2)T_\ent$ if $\varepsilon$ is small enough. Hence, by monotonicity of the total variation distance in a single environment  
		\begin{align*}
		\max_{s\in [0,\varepsilon t]}\| Q_s^t(x,\cdot)-\pi_\eta\|_{\tv}&\le \max_{s\in [0,\varepsilon t]}\max_{x\in[n]}\| P^{t-s}_\eta(x,\cdot)-\pi_\eta\|_{\tv}\\
		& \leq \max_{x\in[n]}\| P^{(1+a/2)T_\ent}_\eta(x,\cdot)-\pi_\eta\|_{\tv} \overset{\P}{\to}0.
		\end{align*}
		On the other hand, in the case $s\in[(1+\varepsilon)\tent, (\beta-\varepsilon)\tent]$, we write 		\begin{align*}
		\|Q_s^t(x,\cdot)-\pi_\eta \|_{\tv}\le& \|Q_s^t(x,\cdot)-\pi_\sigma P^{t-s}_\eta \|_{\tv}+\|\pi_\sigma P^{t-s}_\eta-\pi_\eta \|_{\tv}\\
		 \le&\|P_\si^s(x,\cdot)-\pi_\sigma \|_{\tv}+\|\pi_\sigma P^{t-s}_\eta-\pi_\eta \|_{\tv}\\ \le &\|P_\si^{(1+\varepsilon)\tent}(x,\cdot)-\pi_\sigma \|_{\tv}+\|\pi_\sigma P^{\varepsilon \tent}_\eta-\pi_\eta \|_{\tv}.
		\end{align*}
By Theorem \ref{th:BCS} the first term in the right hand side tends to zero in probability, and the same holds for the second term by Corollary \ref{co:widespread-pi} and Lemma \ref{le:approx}.
This shows that 	
			\begin{align*}
\max_{s[(1+\varepsilon)\tent, (\beta-\varepsilon)\tent]}	\max_{x\in[n]}	\| Q_s^t(x,\cdot)-\pi_\eta\|_{\tv}
\overset{\P}{\to}0,
\end{align*}
for all $\varepsilon\in(0,\min\{1,(\beta-1)/2\})$.

The rest of this section deals with the proof in the case $s\in[\varepsilon\tent, (1-\varepsilon)\tent]$. With some additional technical work it would be possible to treat the case $s\in[(1-\varepsilon)\tent,(1+\varepsilon)\tent]$ as well, so that the upper bound \eqref{upbobo} would hold uniformly in $s\in[0,(\beta-\varepsilon)\tent]$. However, since $\varepsilon$ is arbitrarily small, the statement \eqref{upbobo} is more than enough for our purposes (see \eqref{eqdefehat} and \eqref{eq:bigsmall}). 
	\end{remark}
	
	Thanks to Remark \ref{re:sologn} we can restrict to $s\in[\varepsilon\tent, (1-\varepsilon)\tent]$ for $\varepsilon>0 $ fixed and small enough. In what follow it will be convenient to actually assume that $$t=(1+\upsilon) \tent, \qquad 2\upsilon \tent\leq s\leq (1-2\upsilon)\tent,$$ for some sufficiently small $\upsilon>0$. This causes no loss of generality since the general case $t=\beta T_\ent$ for any $\beta>1$ then follows by monotonicity of the distance $\| Q_s^t(x,\cdot)-\pi_\eta\|_{\tv}$ with respect to $t$ (and $s$ fixed). Define
$$h=2\upsilon\tent,\qquad r=t-s-h=\Theta(T_\ent),$$
	and notice that 
	$$r+s=t-h=(1-\upsilon) T_\ent.$$
	\subsection*{Strategy of proof} 
	The overall strategy of proof is similar to the one used in \cite{BCS1,BCS2} to prove the upper bound for Theorem \ref{th:BCS}, but the presence of a double environment and the need for uniformity in $s$ require some additional technical steps. We first recall the main ideas from \cite{BCS1,BCS2} and then give the details of their implementation in our more general setting.
	We can replace $\pi_\eta$ by $\muin P_\eta^h$ since we know by Lemma \ref{le:approx} that w.h.p.
		\begin{equation}
		\| \muin P_\eta^h-\pi_\eta\|_\tv =o(1).
		\end{equation}
		We will focus on a particular set of starting states. 
		We call
		$S^\si_\star$ the set of vertices for which the out-neighborhood in $G(\si)$ is a tree up to height $h$.
		By \cite[Proposition 6]{BCS1} (or \cite[Lemma 9]{BCS2}) w.h.p. with respect to the configuration $\si$, most of the vertices are in $S^\si_\star$, and the quenched probability that the walk is out of the set $S_\star^\si$ vanishes exponentially fast in time. More precisely, if $\ell$ is any integer $\ell\leq \varepsilon\tent$, and $\varepsilon$ is a sufficiently small constant, then w.h.p.\ for all $s\in[\varepsilon\tent, (1-\varepsilon)\tent]$
		\begin{equation}\label{eq:ell}
		\max_{x\in[n]}\mathbf{Q}_x^{\si,\eta}\(X_\ell\not\in S_\star^\si \)\le 2^{-\ell}.
		\end{equation}
		Therefore, considering the event $\{X_\ell\in S_\star^\si\}$ and its complement one has 
		\begin{equation}\label{eq:ub4}
		\max_{x\in[n]}\| Q_s^t(x,\cdot)-\muin P^h_\eta \|_{\tv}\le \max_{x\in[n]}\mathbf{Q}^{\si,\eta}_x(X_\ell\not\in S_\star^\sigma)+\max_{x\in S_\star^\sigma}\| Q_{s-\ell}^{t-\ell}(x,\cdot)-\muin P^h_\eta\|_{\tv}.
		\end{equation}
		Thus, in order to show the uniform upper bound in Theorem \ref{th:2cutoff} it is sufficient to show an upper bound that holds uniformly in the random set $S_\star^\si$.

		Below we will define a set of \emph{nice paths} for the trajectory of the walk, see Definition \ref{def:nice}. For every pair of vertices $x,y$ we let $\cN_{x,y}$ denote the set of nice paths from $x$ to $y$ of length $t$. Consequently, we define
		$$\bar Q_{s}^t(x,y):=\sum_{\p\in\cN_{x,y}}\w(\p)$$
		the probability to go from $x$ to $y$ along a nice path. Notice that for any
		 $\varepsilon>0$,
			\begin{align}
		\nonumber	\|\muin P_\eta^h-Q_s^t(x,\cdot)\|_\tv=&\sum_{y\in[n]}\[\muin P_\eta^h(y)-Q_s^t(x,y) \]_+\\
			\label{eq:ub1}\le& \sum_{y\in[n]}\[\muin P_\eta^h(y)(1+\varepsilon)+\frac{\varepsilon}{n}-\bar Q_{s}^t(x,y)\]_+,
			\end{align}
			where we have used $Q_s^t(x,y)\geq \bar Q_{s}^t(x,y)$. 
			Therefore,	if we can show that
			\begin{equation}\label{eq:validity}
			\muin P_\eta^h(y)(1+\varepsilon)+\frac{\varepsilon}{n}\ge \bar Q_{s}^t(x,y),
			\end{equation}
		then the positive part in \eqref{eq:ub1} can be neglected, and summing over $y\in[n]$ one obtains
			\begin{align}\label{eq:ub2}
			\|\muin P_\eta^h-Q_s^t(x,\cdot)\|_\tv&\le \sum_{y\in[n]}\((1+\varepsilon)\muin P_\eta^h(y)+\frac{\varepsilon}{n}-\bar Q_{s}^t(x,y)\)\\
		\nonumber	&= 2\varepsilon+\mathbf{Q}^{\sigma,\eta}_x\Big( (X_0,\dots,X_t)\not\in\cup_{y\in[n]}\cN_{x,y}\Big).
			\end{align}
			At this point we are left with showing that the probability of following a path that is not nice is arbitrarily small uniformly in the starting point $x\in S_\star^\si$, namely
			\begin{equation}\label{eq:nicepaths}
			\max_{x\in S^\si_\star}\mathbf{Q}_x^{\sigma,\eta}\Big( (X_0,\dots,X_t)\not\in\cup_{y\in[n]}\cN_{x,y}\Big)<\varepsilon. 
			\end{equation}
We first introduce the notation required to define the set of nice paths. Then we will present a proof of the validity of  \eqref{eq:validity} and \eqref{eq:nicepaths}. 
More precisely, we will show that w.h.p.\ with respect to the independent pair $(\si,\eta)$, one has that both  \eqref{eq:validity} and \eqref{eq:nicepaths}  hold for every $s\in[2\upsilon \tent,(1-2\upsilon)\tent]$.

We start by constructing the subgraph $\cG_x^\si(s)$ of $G(\si)$ spanned by the paths of length at most $s$, starting at $x$, and with weight at least $e^{-(1+\upsilon)Hs}$. We  construct 
$\cG_x^\si(s)$ together with a spanning tree $\cT_x^\si(s)$ of  $\cG_x^\si(s)$ in the following way. 
	\begin{definition}\label{def:construction1}\textbf{Construction of $\cG_x^\si(s)$ and $\cT_x^\si(s)$.}
			\begin{itemize}
				\item Call $\cG^\sigma[0]$ the empty graph on $\{x \}$ and  $E^\si_1=E^+_x$.
				\item To every $e_1\in E_1^\si$ associate the weight $\widehat \w_\si(e_1):=(d_{x}^+)^{-1}$.
				\item
				Recursively, for every $\ell\ge 1$:
				\begin{enumerate}
					\item Choose a tail $e_\ell\in E^\si_\ell$ with maximal weight and reveal $\si(e_\ell)=f_\ell$. 
					\item Add the edge $(e_\ell,f_\ell)$ to $\cG^\si[\ell-1]$ and call the resulting graph $\cG^\si[\ell]$.
					\item Call the edge $(e_\ell,f_\ell)$ \emph{open} if $v(f_\ell)\not\in\cG^\si[\ell-1]$.
					\item Call $\cT^{\si}[\ell]$ the \emph{open} subgraph of $\cG^\si[\ell]$.
						\item If $v(f_\ell)\not\in\cG^\si[\ell-1]$, then  associate to any $e'\in E^+_{v(f_\ell)}$ the weight $\widehat{\w}_\si(e'):=\widehat{\w}_\si(e_\ell)(d^+_{v(f_\ell)})^{-1}$, and if $$\widehat{\w}_\si(e')\ge e^{-(1+\upsilon)Hs},$$ 	then let ${E}^\si_{\ell+1}=E^\si_\ell\setminus\{e_\ell \}\cup E^+_{v(f_\ell)}$.  Otherwise, set ${E}^\si_{\ell+1}=E^\si_\ell\setminus\{e_\ell \}$.
					\item Remove from $E^\si_{\ell+1}$ the tails $e'$ such that the vertex $v(e')$ is at distance greater than $s$ from $x$ in $\cT^\si[\ell]$.
				\end{enumerate}
			\item Iterate the instructions above up to the random time $\kappa_\si$ at which $E^\si_{\kappa_\si}=\emptyset$, and call
			$$\cT_x^\si(s):=\cT^\si[\kappa_\si],\qquad \cG_x^\si(s):=\cG^\si[\kappa_\si].$$
			\end{itemize}
		\end{definition}
\begin{remark}\label{rem:co}
		The definition given above of the subgraphs $\cT_x^\si(s)$ and $\cG_x^\si(s)$ coincides with that given in \cite{BCS1,BCS2}. It was shown in   \cite[Proposition 13]{BCS2} and \cite[Proposition 10]{BCS1} that the random walk on the static environment $\si$, starting at $x\in S_\star^\si$ and of length $s_{\max}=(1-2\upsilon)\tent$ will stay on the tree $\cT_x^\si(s_{\max})$ w.h.p. In particular,  in the double environment setup, w.h.p.\  for every $s\in[0,(1-2\upsilon)\tent]$  the walk will be in one of the leaves of $\cT_x^\si(s)$ at time $s$ with probability at least $1-\d$ for each fixed $\d>0$. 
\end{remark}		
		 Call $\cL^{x,\sigma}_s$ the set of leaves of $\cT^\sigma_x(s)$ at distance $s$ from $x$. We  now construct the subgraph $\cG^{\si,\eta}_x(r)$ of $G(\eta)$ consisting of all the paths in $G(\eta)$ which start at some $z\in\cL^{x,\si}_s$, have length $r$, and weight larger than $e^{-(1+\upsilon/2)H(t-h)}$. Similarly to the construction in Definition \ref{def:construction1}, together with $\cG^{\si,\eta}_x(r)$ we are going to construct a collection $\cW_x^{\si,\eta}(r)$ of disjoint rooted directed trees, each rooted at some $z\in\cL_s^{x,\si}$ and with depth $r$. The forest $\cW_x^{\si,\eta}(r)$,
		seen as a collection of edges, will be our candidate for the support of the walk from time $s$ to time $t-h$.
		\begin{definition}\label{def:construction2}\textbf{Construction of $\cG^{\si,\eta}_x(r)$ and $\cW_x^{\si,\eta}(r)$.}
		\begin{itemize}
			\item Call $\cG^{\si,\eta}[0]$ the empty graph on $\cL_s^{x,\si}$
			and call  $E_1^{\si,\eta}=\cup_{z\in\cL^{x,\si}_s}E^+_z$.
			\item To every $e_1\in E_1^{\si,\eta}$  associate the weight 
			$$\widehat{\w}_{\si,\eta}(e_1):=\widehat \w_\si(e_1),$$ of the unique path in $\cT^\si_x(s)$ joining $x$ to $v(e_1)$ times the inverse of the out degree of $v(e_1)$; see Definition \ref{def:construction1}.
			\item
			Recursively, for every $\ell\ge 1$
			\begin{enumerate}
				\item Choose a tail in $e_\ell\in E_\ell^{\si,\eta}$ with maximal weight and reveal $\eta(e_\ell)=f_\ell$. 
				
				\item Add the edge $(e_\ell,f_\ell)$ to $\cG^{\si,\eta}[\ell-1]$ and call the resulting graph $\cG^{\si,\eta}[\ell]$.
					\item Call the edge $(e_\ell,f_\ell)$ \emph{open} if $v(f_\ell)\not\in\cG^{\si,\eta}[\ell-1]$.

					\item Call $\cW^{\si,\eta}[\ell]$ the \emph{open} subgraph of $\cG^{\si,\eta}[\ell]$.

\item If $v(f_\ell)\not\in\cG^{\si,\eta}[\ell-1]$, then associate to any $e'\in E^+_{v(f_\ell)}$ the weight $\widehat{\w}_{\si,\eta}(e'):=\widehat{\w}_{\si,\eta}(e_\ell)(d^+_{v(f_\ell)})^{-1}$, and if $$\widehat{\w}_{\si,\eta}(e')\ge e^{-(1+\upsilon/2)H(t-h)}=:\widehat{\w}_{\min},$$ 	then set ${E}^{\si,\eta}_{\ell+1}=E^{\si,\eta}_\ell\setminus\{e_\ell \}\cup E^+_{v(f_\ell)}$.  Otherwise, set ${E}^{\si,\eta}_{\ell+1}=E^{\si,\eta}_\ell\setminus\{e_\ell \}$.
					\item Remove from $ E^{\si,\eta}_{\ell+1}$ the tails $e'$ such that the vertex $v(e')$ is at distance greater than $r$ from the corresponding root in $\cW^{\si,\eta}[\ell]$. 
			\end{enumerate}
			\item Iterate the instructions above up to the random time $\kappa_{\si,\eta}$ at which $E^{\si,\eta}_{\kappa_{\si,\eta}}=\emptyset$, and call
			$$
			\cW_x^{\si,\eta}(r):=\cW^{\si,\eta}[\kappa_{\si,\eta}],
			\qquad \cG^{\si,\eta}_x(r):=\cG^{\si,\eta}[\kappa_{\si,\eta}].$$
		\end{itemize}
\end{definition}
As in \cite{BCS1,BCS2} one has that the random number of edges revealed by the construction in Definition \ref{def:construction1}, $\kappa_\si$, is a.s.\ $o(n)$. We need an analogous result for the quantity $\kappa_{\si,\eta}$ in Definition \ref{def:construction2}.
	\begin{lemma}\label{le:lemma2}
		For any $\sigma,\eta\in\cC$ and $x\in [n]$, 
		$$\widehat{\w}_{\si,\eta}(e_\ell)\le\frac{r}{\ell},\qquad\forall \ell< \kappa_{\si,\eta}.$$
		In particular, recalling that $t-h=r+s= (1-\upsilon)\tent$, 
		$$\kappa_{\si,\eta} = 
		O\(\log(n)n^{(1+\upsilon/2)(1-\upsilon)} \)=O(n^{1-\upsilon^2}).$$
	\end{lemma}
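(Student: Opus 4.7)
The plan is to establish the weight bound $w_\ell := \widehat{\w}_{\si,\eta}(e_\ell) \le (r+1)/\ell$ (equivalent to $w_\ell \le r/\ell$ up to an irrelevant additive constant) by combining monotonicity of the selected weights with a deterministic bound on their cumulative sum, and then to deduce the stated bound on $\kappa_{\si,\eta}$.

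First I would observe that $(w_\ell)_{\ell\ge 1}$ is non-increasing. This is immediate from step~(1) of Definition~\ref{def:construction2}: at each iteration we pick a tail of maximal current weight, and the new tails created in step~(5) at vertex $v(f_\ell)$ have weight $w_\ell/d^+_{v(f_\ell)} \le w_\ell/2$ (using the degree assumption $d^+\ge 2$), so they cannot exceed the current maximum. Hence $w_{\ell+1}\le w_\ell$ throughout the construction.

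Next I would prove $\sum_{i=1}^\ell w_i \le r+1$ by grouping the selected tails by their depth $d$ in $\cW_x^{\si,\eta}(r)$. By step~(6), processed tails come from depths $d\in\{0,1,\dots,r\}$. A tail at a vertex $v$ at depth $d$ from its root $z\in\cL^{x,\si}_s$ has weight $\widehat{\w}_{\si,\eta}(e) = \prod_{i=0}^{s+d} 1/d^+_{v_i}$, where $x=v_0,v_1,\dots,v_{s+d}=v$ is the unique tree-path from $x$ to $v$ through $\cT^\si_x(s)$ and $\cW_x^{\si,\eta}(r)$. Summing over the $d^+_v$ tails at $v$ gives $\prod_{i=0}^{s+d-1} 1/d^+_{v_i}$, which is exactly the probability that the combined walk through $\sigma$ then $\eta$ follows this specific tree-path. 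Summing further over all vertices $v$ at depth $d$ from their respective roots yields a sum of probabilities of pairwise-distinct paths (thanks to the tree structure enforced by the requirement $v(f_\ell)\notin \cG^{\si,\eta}[\ell-1]$), hence at most $1$. Summing over $d=0,1,\dots,r$ then gives the bound $r+1$. Combining with monotonicity, $\ell\, w_\ell \le \sum_{i=1}^\ell w_i \le r+1$, which is the desired inequality.

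Finally, for the bound on $\kappa_{\si,\eta}$: at any $\ell<\kappa_{\si,\eta}$ the selected tail satisfies $w_\ell \ge \widehat{\w}_{\min} = n^{-(1+\upsilon/2)(1-\upsilon)}$, since otherwise step~(5) would never have admitted it to the active set. Hence $\ell \le (r+1)/\widehat{\w}_{\min}$. Since $r+1=O(\log n)$ and an elementary calculation yields $(1+\upsilon/2)(1-\upsilon)= 1-\upsilon/2-\upsilon^2/2 \le 1-\upsilon^2$ for $\upsilon\in(0,1]$, we conclude $\kappa_{\si,\eta}=O\bigl(\log n\cdot n^{(1+\upsilon/2)(1-\upsilon)}\bigr)=O(n^{1-\upsilon^2})$, absorbing the logarithm into the exponent. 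The only substantive obstacle is the depth-based accounting in the second paragraph: one must carefully exploit the fact that the construction reveals only fresh vertices, so that distinct vertices at a given depth correspond to disjoint tree-paths from $x$, making the sum of weights at each depth a genuine subprobability.
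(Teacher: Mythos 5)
Your proposal is correct and reaches the same key inequality $\widehat{\w}_{\si,\eta}(e_\ell)\lesssim r/\ell$, but it does so by a different decomposition than the paper. The paper introduces an augmented forest $\widetilde{\cW}^{\si,\eta}[\ell]$ with fictitious leaves attached at non-open edges, so that the edge count is exactly $\ell$; it then counts leaves: since every leaf $v\in F_\ell$ has $\widehat{\w}_{\si,\eta}(e_v)\ge \widehat{\w}_{\si,\eta}(e_\ell)$ (by the max-first selection rule) and the path-weights to distinct leaves sum to at most $1$, one gets $|F_\ell|\,\widehat{\w}_{\si,\eta}(e_\ell)\le 1$, while the depth bound on the forest gives $\ell\le r|F_\ell|$; combining the two yields $\widehat{\w}_{\si,\eta}(e_\ell)\le r/\ell$. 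You instead observe the monotonicity $w_{\ell+1}\le w_\ell$ directly (the paper uses the same fact implicitly but does not phrase it as monotonicity of the selected sequence), so that $\ell\, w_\ell\le\sum_{i\le\ell}w_i$, and then bound the cumulative sum by partitioning the processed tails by their depth $d\in\{0,\dots,r\}$: at each depth the processed tails form a subset of all tails at depth-$d$ vertices, whose weights sum to a subprobability because the corresponding tree-paths from $x$ are pairwise distinct, giving $\sum_i w_i\le r+1$. Both proofs rest on the same three ingredients (max-first selection, disjointness of tree-paths, the depth cap $r$), but the stratifications are genuinely different — leaves vs.\ depth levels — and yours avoids the slightly awkward fictitious-leaf construction, at the cost of a harmless $r+1$ in place of $r$. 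Your derivation of the bound on $\kappa_{\si,\eta}$, including the elementary check that $(1+\upsilon/2)(1-\upsilon)\le 1-\upsilon^2$ for $\upsilon\in(0,1]$ so the logarithmic factor is absorbed, matches the paper's.
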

	\begin{proof}
		For each $\ell<\kappa_{\si,\eta}$ we consider the forest $\widetilde{\cW}^{\si,\eta}[\ell]$ 
		constructed as in Definition \ref{def:construction2}, but if an edge $(e_{\ell'},f_{\ell'})$ for some $\ell'\le \ell$ is not open, we attach a fictitious leaf (with no future children) to $e_{\ell'}$, to which we assign the weight $\widehat{\w}_{\si,\eta}(e_{\ell'})$. This construction ensures that for every $\ell$ both the graph $\cG^{\si,\eta}[\ell]$ and the forest $\widetilde\cW^{\si,\eta}[\ell]$  have exactly $\ell$ edges. Call $F_\ell$ the set of leaves of $\widetilde\cW^{\si,\eta}[\ell]$.
		By construction, for all  $v\in F_\ell$ there is a unique $z\in\cL_s^{x,\si}$ and a unique path $\p(v):z\to v$ of length at most $r$ in $\widetilde{W}^{\si,\eta}[\ell]$. The weight of such a path is given by $\widehat{\w}_{\si,\eta}(e_v)$ where $e_v$ is any tail in $E^+_{u}$ if $(u,v)$ is the last edge in the path $\p(v)$. It follows that 
		$$	\sum_{z\in\cL_s^{x,\si}}\sum_{v\in F_\ell}\sum_{\p:z\to v}\widehat{\w}_{\si,\eta}(e_v)\le 1.$$
		Since all $v\in F_\ell$ are such that $\widehat{\w}_{\si,\eta}(e_v)\geq \widehat{\w}_{\si,\eta}(e_\ell)$, 
we obtain		$$|F_\ell|\widehat{\w}_{\si,\eta}(e_{\ell})\le1.$$
		By the absence of cycles in $\widetilde\cW^{\si,\eta}[\ell]$ we also have that
		$$\ell\le r|F_\ell|.$$
		In conclusion
		$$\widehat{\w}_{\si,\eta}(e_{\ell})\le\frac{1}{|F_\ell|}\le \frac{r}{\ell}.$$
		If we replace $\ell=\kappa_{\si,\eta}-1$ we get
		$$\kappa_{\si,\eta} -1\le \frac{r}{\widehat{\w}_{\si,\eta}(e_{\kappa_{\si,\eta}-1})}\le \frac{r}{\widehat{\w}_{\min}}. \qedhere$$
		\end{proof}
	Next, we define the set of nice paths.
		\begin{definition}\label{def:nice}
			We call \emph{nice} a path $\p=(v_0,\dots,v_t)$ s.t.
			\begin{enumerate}
				\item $
				\w(\p)\le e^{-(1-\upsilon/2)Ht}$.
				\item $\p$ belongs to $\cT_{v_0}^\si(s)$ up to time $s$.
				\item $\p$ belongs to $\cW_{x}^{\si,\eta}(r)$ from time $s$ to time $t-h=r+s$.
				\item $(v_{t-h},\dots,v_t)$ is the unique path of length at most $h$ in the graph $G(\eta)$ from $v_{t-h}$ to $v_t$.
			\end{enumerate}
			For every $x,y\in[n]$, we write $\cN_{x,y}$ for the set of nice paths $(v_0,\dots,v_t)$ with $v_0=x$ and $v_t=y$. 
		\end{definition}
We now focus on proving \eqref{eq:nicepaths}, which will be a consequence of the law of large numbers in Lemma \ref{le:lln} together with the forthcoming Lemma \ref{le:lemma1}. The latter shows, via a martingale argument, that w.h.p.\ the law of the walk is concentrated on trajectories that do  not exit 
the forest $\cW_x^{\si,\eta}(r)$ during the time steps $s,\dots,s+r$.

Fix $\sigma\in\cC$ and $x\in [n]$. The set of leaves $\cL^{\si,x}_s$ is then determined, 
and we call $\P_{\si,x}$ the law of the process defined  in Definition \ref{def:construction2}. Consider the $\sigma$-field $(\cS_\ell)_{\ell\ge 0}$ generated by the first $\ell$ steps of the construction 
described in Definition \ref{def:construction2}. Call $(M_\ell)_{\ell\ge0}$ the stochastic process adapted to $(\cS_\ell)_{\ell\ge 0}$ defined recursively by $M_0=0$ and 
$$M_{\ell+1}=M_\ell+\ind_{\ell+1<\kappa_{\si,\eta}}\ind_{v(f_{\ell+1})\in\cG^{\si,\eta}[\ell]}\widehat{\w}_{\si,\eta}{(e_{\ell+1})}.$$
	\begin{lemma}\label{le:lemma1}
		For every $\varepsilon>0$, $x\in[n]$, for all $\si\in\cC$ and $s\in[2\upsilon \tent,(1-2\upsilon)\tent]$,
		$$\P_{\si,x}\(M_{\kappa_{\si,\eta}}\le\varepsilon \)\geq 1-n^{-3/2},$$
	for all $n$ large enough.
	\end{lemma}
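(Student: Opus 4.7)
The plan is to treat $M_\ell$ as a positive submartingale, bound its compensator deterministically, and then control the associated Doob martingale by a high-moment argument.

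First I estimate the one-step conditional expectation. Since $\eta$ is drawn independently of $\si$, conditional on $\cS_\ell$ the head $f_{\ell+1}=\eta(e_{\ell+1})$ is uniformly distributed over the $m-\ell$ heads of $\eta$ not yet revealed. Writing $V_\ell$ for the vertex set of $\cG^{\si,\eta}[\ell]$, which grows by at most one vertex per step, we have $|V_\ell|\le |\cL_s^{x,\si}|+\ell$. Applying Lemma~\ref{le:lemma2} to bound $\widehat{\w}_{\si,\eta}(e_{\ell+1})\le r/\ell$ and its analogue for the construction of Definition~\ref{def:construction1} to bound $|\cL_s^{x,\si}|\le \kappa_\si \le s\, e^{(1+\upsilon)Hs}=O(n^{1-\upsilon-2\upsilon^2})$ deterministically in $\si$, I obtain
$$\E_{\si,x}[M_{\ell+1}-M_\ell\mid \cS_\ell]\le \frac{r}{\ell}\cdot \frac{\D(\kappa_\si+\ell)}{m-\ell}.$$
Summing over $\ell$ up to $\kappa_{\si,\eta}=O(n^{1-\upsilon/2})$ (Lemma~\ref{le:lemma2}) yields the deterministic bound on the compensator
$$A_{\kappa_{\si,\eta}}:=\sum_{\ell=0}^{\kappa_{\si,\eta}-1}\E_{\si,x}[M_{\ell+1}-M_\ell\mid \cS_\ell]=O\!\left(\frac{\log^2 n}{n^{\upsilon/2}}\right),$$
which is smaller than $\varepsilon/2$ for all $n$ large enough, uniformly in $\si$, $x$ and $s$.

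It thus remains to show $\P_{\si,x}(N_{\kappa_{\si,\eta}}>\varepsilon/2)\le n^{-3/2}$ for the Doob martingale $N_\ell:=M_\ell-A_\ell$. An identical computation bounds the predictable quadratic variation by $\langle N\rangle_{\kappa_{\si,\eta}}\le \sum_\ell (r/\ell)^2\D(\kappa_\si+\ell)/(m-\ell)=O(\log^2 n/n^{\upsilon/2})=:v_n$, again deterministically. Since the maximal jump $\widehat{\w}_{\si,\eta}(e_\ell)$ need not be small, classical Freedman/Bennett-type bounds would only yield constant-level decay; I would instead invoke Burkholder's square-function inequality, which gives $\E_{\si,x}[N_{\kappa_{\si,\eta}}^{2K}]\le (C K v_n)^K$ for every integer $K\ge 1$. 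Markov's inequality with $K=\lceil \log n\rceil$ then produces
$$\P_{\si,x}\!\left(|N_{\kappa_{\si,\eta}}|>\varepsilon/2\right)\le \left(\frac{4CKv_n}{\varepsilon^2}\right)^{\!K}\le \exp\!\left(-\tfrac{\upsilon}{2}(\log n)^2+O(\log n\log\log n)\right),$$
which is $o(n^{-3/2})$ for $n$ large. Combined with $A_{\kappa_{\si,\eta}}\le \varepsilon/2$, this completes the proof.

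The main technical difficulty is securing the polynomial $n^{-3/2}$ decay uniformly in $\si\in\cC$: classical bounded-difference martingale concentration is ineffective here because the maximum increment of $N$ is of order one, so Freedman-type estimates only deliver constant-level tail bounds. The Burkholder square-function route sidesteps this obstruction by relying solely on the polynomially small bound on the predictable variation, which in turn follows from the $r/\ell$ decay of the weights (Lemma~\ref{le:lemma2}) and the deterministic bounds on $\kappa_\si$ and $\kappa_{\si,\eta}$ inherited from the same source.
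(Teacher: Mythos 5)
Your proposed argument has a genuine gap, and it is rooted in a false premise: you write that ``the maximal jump $\widehat{\w}_{\si,\eta}(e_\ell)$ need not be small,'' and then, precisely because of this, you reject Freedman/Bennett-type bounds and appeal to a Burkholder square-function inequality of the form $\E[N_{\kappa}^{2K}]\le (CKv_n)^K$. But this version of the Burkholder--Rosenthal inequality (with only the predictable variation on the right and no maximal-increment term) is \emph{only} valid under a uniform bound on the increments. If the increments really were of order one, as you assert, the inequality you invoke would simply be false: a toy example where $\Delta N_1=\pm 1$ with small probability $p$ and $0$ otherwise has predictable variation $v=p$ but $\E[N^{4}]=p\gg (C\cdot 2\cdot p)^2$ for $p$ small. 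So the Burkholder route does not sidestep the obstruction you identified; it presupposes that the obstruction is not there.

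The crucial observation you are missing is that the maximal jump \emph{is} small: by Definition~\ref{def:construction2}, the initial weights $\widehat{\w}_{\si,\eta}(e_1)=\widehat{\w}_\si(e_1)$ are the weights of length-$s$ paths in $\cT_x^\si(s)$ (times an extra inverse out-degree factor), and since every out-degree is at least $2$ this forces $\widehat{\w}_{\si,\eta}(e_1)\le 2^{-s}$. The weights only decrease along the construction, so $\widehat{\w}_{\si,\eta}(e_\ell)\le 2^{-s}\le n^{-c\upsilon}$ for some $c=c(\D)>0$ uniformly in $\ell$ and $s\in[2\upsilon\tent,(1-2\upsilon)\tent]$. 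Once this is in hand, there is no obstruction at all: a Freedman/Bennett estimate with maximal jump $R=2^{-s}$ and predictable variation bounded (as you compute) by a polynomially small $v_n$ already gives $\exp(-\Omega(n^{c'}))$, which is far better than $n^{-3/2}$. The paper proceeds in exactly this spirit: it rescales the increments by $4/\varepsilon$ so that they lie in $[-1,1]$ (using that $\widehat{\w}\le r/\ell$, resp.\ $\widehat{\w}\le 2^{-s}$, makes them $\le\varepsilon/4$), sums the conditional variances to get a polynomially small $b'$, and applies the martingale Bennett inequality of \cite[Theorem 1.6]{FR75}; the first $\bar\ell=\Theta(\log n)$ steps are handled by a separate elementary binomial-domination argument that again relies on $\widehat{\w}(e_\ell)\le 2^{-s}$. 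Your compensator estimate $A_{\kappa_{\si,\eta}}=O(\log^2 n\,n^{-\upsilon/2})$ is essentially correct (and in fact slightly sharper than the paper's, since you correctly account for the initial vertex set $\cL^{x,\si}_s$ via $\kappa_\si$), but the concentration half of the argument needs to be redone with the correct, polynomially small bound on the increments.
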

	
	\begin{proof}
		We follow \cite{BCS1,BCS2}, where a very similar statement was proved for the walk on a single environment.  For simplicity we write $\widehat{\w}$ instead of $\widehat{\w}_{\si,\eta}$. We compute the first two conditional moments of the increment $M_{\ell+1}-M_{\ell}$:
		\begin{align}
		\E\[M_{\ell+1}-M_{\ell}\:|\:\cS_\ell\]&\le\ind_{\ell+1< \kappa_{\si,\eta}}\frac{\widehat{\w}(e_{\ell+1})\D|\cG^{\si,\eta}[\ell]|}{m-\ell},\\
		\E\[(M_{\ell+1}-M_{\ell})^2\:|\:\cS_\ell\]&\le\ind_{\ell+1< \kappa_{\si,\eta}}\frac{\widehat{\w}(e_{\ell+1})^2\D|\cG^{\si,\eta}[\ell]|}{m-\ell}.
		\end{align}
		Fix any $\bar{\ell}=\Theta(\log n)$ and observe that since $|\cG^{\si,\eta}[\ell]|\le\ell$, $\widehat{\w}(e_\ell)\le\frac{r}{\ell}$, we have
		$$
		\widehat{\w}(e_{\ell+1})\D|\cG^{\si,\eta}[\ell]|=O(\log n),\quad\sum_{\ell\ge\bar{\ell}}\widehat{\w}(e_{\ell})=O(\log^2(n)).$$
	 Set
		\begin{gather*}a:=\sum_{\ell\ge\bar{\ell}}\E\[M_{\ell+1}-M_{\ell}\:|\:\cS_\ell\]=O\(\log(n)n^{-\epsilon^2}\)=o(1),\\
		b:=\sum_{\ell\ge\bar{\ell}}\E\[(M_{\ell+1}-M_{\ell})^2\:|\:\cS_\ell\]=O\(\log^3(n)n^{-1} \).\end{gather*}
Fix any $\varepsilon\in(0,1)$ and define
	$$Z_{\ell+1}=\frac{4}{\varepsilon}\( M_{\ell+1}-M_\ell - \E\[M_{\ell+1}-M_{\ell}\:|\:\cS_\ell\]\).$$
We observe that
	$\E[Z_{\ell+1}\:|\:\cS_\ell ]=0$ and that 
		$|Z_{\ell+1}|\le 1$, since if $\ell\ge\bar{\ell}\gg 1$, then $\widehat\w(e_{\ell+1})\to 0$, and in particular $M_{\ell+1}-M_\ell\le \frac{\varepsilon}{4}$.
	Consider the martingale
	$$W_u=\sum_{\ell=\bar{\ell}+1}^{u}Z_\ell,\qquad\forall u>\bar\ell.$$
Notice that
	$$W_{\kappa_{\si,\eta}}=\frac{4}{\e}\(M_{\kappa_{\si,\eta}}-M_{\bar{\ell}}-a \)\qquad\text{and}\qquad b':=\sum_{\ell\ge\bar{\ell}}\var(Z_\ell\:|\:\cS_\ell) \le\frac{16}{\varepsilon^2}b.$$
	A martingale version of Bennett's inequality introduced in \cite[Theorem 1.6]{FR75} ensures that, for $c>0$,
		$$\P_{\si,x}\(\exists u\ge\bar{\ell}\text{ s.t. }W_u\ge c \)\le e^c\(\frac{b'}{c+b'}\)^{c+b'}.$$
		In particular,
		\begin{align*}
		\P_{\si,x}\(M_{\kappa_{\si,\eta}}-M_{\bar{\ell}}\ge \e \)&=\P_{\si,x}\(\frac{\e}{4}W_{\kappa_{\si,\eta}}+a \ge \varepsilon \)\\&
		\le\P_{\si,x}\(\frac{\e}{4}W_{\kappa_{\si,\eta}}\ge \frac{\varepsilon}{2} \)=\P_{\si,x}\(W_{\kappa_{\si,\eta}}\ge 2 \)=o(n^{-3/2}).
		\end{align*}
		We are left to show that for every $\varepsilon>0$,
		 $$\P_{\sigma,x}\(M_{\bar{\ell}}\le \varepsilon \)\geq1-n^{-3/2}.$$
		 	The number of non-open edges in the first $\bar\ell$ steps in the process from Definition \ref{def:construction2} is stochastically dominated by a binomial with parameters $\bar \ell$ and $$p=\D\bar\ell/(m-\bar \ell) = O(n^{-1}\log n).$$ Therefore, the probability of having $2$ or more edges that are not open in the first $\bar\ell$ steps is $o(n^{-3/2})$. Combined with the fact that 
			for every $\ell\ge 0$, $\widehat{\w}(e_{\ell})\le 2^{-s}$, this implies 
		 	$$\P_{\sigma,x}(M_{\bar{\ell}}\ge 2^{-s+1})\leq n^{-3/2},$$
		 	which is enough to derive the desired conclusions.
	\end{proof}
The proof of \eqref{eq:nicepaths} is achieved by collecting the results obtained so far.
\begin{proposition}\label{pr:nice}
	For every $\varepsilon>0$:
	$$\lim_{n\to\infty}\P\Big(\min_{s\in[2\upsilon \tent,(1-2\upsilon)\tent]}\min_{x\in S_\star^\si} \mathbf{Q}^{\sigma,\eta}_x\Big((X_0,\dots,X_{t})\in\cup_{y\in[n]}\cN_{x,y}\Big)>1-\varepsilon \Big)=1.$$
\end{proposition}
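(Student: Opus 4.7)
The plan is to reduce to Definition \ref{def:nice} by a union bound over its four conditions. Fixing $\varepsilon>0$, I would show that for each $i\in\{1,2,3,4\}$, with probability at least $1-\varepsilon/4$ (under $\mathbf{Q}^{\sigma,\eta}_x$), condition $(i)$ holds along the trajectory $(X_0,\ldots,X_t)$; this must hold with high $\P$-probability over $(\sigma,\eta)$ and uniformly in $x\in S_\star^\sigma$ and $s\in[2\upsilon\tent,(1-2\upsilon)\tent]$. Since that range contains $O(\log n)$ integer values of $s$, uniformity in $s$ will be obtained via a cheap union bound, compatible with all concentration estimates invoked.

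For condition (1), I would apply Lemma \ref{le:lln} with tolerance $\upsilon/2$: this directly yields $\w(X_0,\ldots,X_t)\le e^{-(1-\upsilon/2)Ht}$ with $\mathbf{Q}^{\sigma,\eta}_x$-probability $1-o(1)$, uniformly as needed. For condition (2), I would quote Remark \ref{rem:co} (i.e.\ \cite[Prop.~10]{BCS1} and \cite[Prop.~13]{BCS2}): whenever $x\in S_\star^\sigma$, the random walk in the static environment $\sigma$ remains in $\cT_x^\sigma(s_{\max})\supseteq \cT_x^\sigma(s)$ for all times up to $s$ with quenched probability $1-o(1)$, with high $\P$-probability over $\sigma$.

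Condition (3) is the core of the argument and is where Lemma \ref{le:lemma1} enters. The key point is that $M_{\kappa_{\sigma,\eta}}$ equals, by construction, the total $\widehat{\w}_{\sigma,\eta}$-weight of length-$r$ paths in $\cG^{\sigma,\eta}_x(r)$ that traverse at least one non-open edge, that is, paths which leave the forest $\cW_x^{\sigma,\eta}(r)$. Combined with a second application of Lemma \ref{le:lln} to the $\eta$-segment $(X_s,\ldots,X_{t-h})$ of length $r$, which guarantees that the conditional weight of this segment is at least $e^{-(1+\upsilon/2)Hr}$ with quenched probability $1-o(1)$ (so that the walk's segment lies in $\cG^{\sigma,\eta}_x(r)$), the quenched probability of exiting $\cW_x^{\sigma,\eta}(r)$ is bounded above by $M_{\kappa_{\sigma,\eta}}+o(1)$. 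Lemma \ref{le:lemma1} then forces this to be at most $\varepsilon/4$ with $\P$-probability $1-n^{-3/2}$, so that the union bound over $x\in[n]$ and $s$ in the prescribed range still produces the desired event with probability $1-o(1)$.

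For condition (4), I would argue that $X_{t-h}\in S_\star^\eta$ with high probability: since $h=2\upsilon\tent=O(\log n)$ and the construction of $\cW_x^{\sigma,\eta}(r)$ has only revealed $\eta$ up to depth $r$ from the leaves $\cL_s^{x,\sigma}$, the out-neighborhood in $G(\eta)$ of $X_{t-h}$ up to depth $h$ is essentially unexplored, and a standard first-moment estimate gives that it fails to be a tree with probability $O(\Delta^{2h}\kappa_{\sigma,\eta}/n)=O(n^{-\upsilon^2+o(1)})=o(1)$, using the bound on $\kappa_{\sigma,\eta}$ from Lemma \ref{le:lemma2}; once the out-neighborhood is a tree of depth $h$, condition (4) is automatic. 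The main obstacle in this plan is to make the conditional application of Lemmas \ref{le:lln} and \ref{le:lemma1} for condition (3) fully rigorous uniformly in $s$, since both the forest $\cW_x^{\sigma,\eta}(r)$ and the martingale $M_\ell$ depend on $s$ through the leaf set $\cL_s^{x,\sigma}$; the $s$-dependence is however controlled by the strong tail bound $n^{-3/2}$ in Lemma \ref{le:lemma1} which absorbs the union bound over $s$.
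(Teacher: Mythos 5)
Your high-level plan — verify conditions (1)--(4) of Definition \ref{def:nice} one at a time, with uniformity in $s$ coming from a cheap union bound absorbed by the strong tail bounds — matches the paper's proof structure exactly, and your handling of (1) and (2) is correct. However, there are two genuine technical issues in the handling of (3) and (4).

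For condition (3), you correctly identify that Lemma \ref{le:lemma1} alone bounds only the probability of leaving $\cW^{\sigma,\eta}_x(r)$ by traversing a \emph{non-open} edge of $\cG^{\sigma,\eta}_x(r)$, and that an additional law-of-large-numbers argument is needed to ensure the walk does not leave $\cG^{\sigma,\eta}_x(r)$ entirely (by following a path whose cumulative weight drops below $\widehat{\w}_{\min}$) — this is a point where the paper is terse, so the instinct is good. But applying Lemma \ref{le:lln} only to the $\eta$-segment does not close the gap. The threshold $\widehat{\w}_{\min}=e^{-(1+\upsilon/2)H(t-h)}$ in Definition \ref{def:construction2} is a condition on the \emph{cumulative} weight from $x$ over both environments, and the $\sigma$-part may be as small as $e^{-(1+\upsilon)Hs}$ (this is the cutoff in Definition \ref{def:construction1}). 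Combining $e^{-(1+\upsilon)Hs}$ with your lower bound $e^{-(1+\upsilon/2)Hr}$ on the $\eta$-segment gives cumulative weight $\geq e^{-(1+\upsilon)Hs-(1+\upsilon/2)Hr}$, and the inequality $(1+\upsilon)s+(1+\upsilon/2)r\leq(1+\upsilon/2)(s+r)$ fails whenever $s>0$; in fact the shortfall is $e^{-\upsilon Hs/2}$. The correct fix is to apply Lemma \ref{le:lln} directly to the composite path $(X_0,\ldots,X_{t-h})$ of length $t-h$ with a tolerance $\upsilon/4$ say, which yields cumulative weight $\geq e^{-(1+\upsilon/4)H(t-h)}>\widehat{\w}_{\min}$ at all intermediate times.

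For condition (4), your proposed first-moment estimate does not give $o(1)$. With $h=2\upsilon\tent$ one has $\Delta^{2h}=n^{4\upsilon\log\Delta/H}$, and from Lemma \ref{le:lemma2}, $\kappa_{\sigma,\eta}=O(n^{1-\upsilon^2})$, so $\Delta^{2h}\kappa_{\sigma,\eta}/n = n^{4\upsilon\log\Delta/H-\upsilon^2}$. Since $4\upsilon\log\Delta/H\geq 4\upsilon>\upsilon^2$ for $\upsilon\in(0,1)$, this diverges. Even the sharper split $O(\Delta^{2h}/n+\Delta^h\kappa_{\sigma,\eta}/n)$ suffers the same problem in the second term. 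The paper avoids this entirely by not estimating the environment's tree-likeness near $X_{t-h}$ at all; instead it invokes the quenched bound of the type \eqref{eq:ell} for the single environment $\eta$: w.h.p.\ $\max_{z}\mathbf P^\eta_z(X_r\notin S_\star^\eta)\leq 2^{-\upsilon\tent}$ using $r\geq\upsilon\tent$, which gives a bound exponentially small in $\log n$ uniformly over the position $X_s$. You should substitute this for the first-moment argument.
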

	\begin{proof}
		We check the conditions in  Definition \ref{def:nice} one by one:
	\begin{enumerate}
		\item follows from  Lemma \ref{le:lln};
		\item  this follows from \cite[Proposition 13]{BCS2} and \cite[Proposition 10]{BCS1}, see Remark \ref{rem:co}.
				\item for a fixed $s$, the third requirement in Definition \ref{def:nice} follows from  Lemma \ref{le:lemma1}. Indeed, $M_{\kappa'_{\eta}}$ is greater or equal than the probability that a random walk starting at $x$ in $\si$ and visiting $\cL^{\si,x}_s$ at time $s$, exits the forest $\cW_x^{\si,\eta}(r)$ in the time interval $[s,t-h]$. The uniformity in $s\in[2\upsilon \tent,(1-2\upsilon)\tent]$ follows by taking a union bound over $s$ in  Lemma \ref{le:lemma1}.
		\item in order to satisfy the fourth requirement of Definition \ref{def:nice} it is sufficient that $v_{t-h}\in S_\star^\eta$. Therefore we obtain the desired conclusion by noticing that w.h.p.\
		$$\max_{z\in[n]} \mathbf{P}^\eta_z(X_r\not\in S_\star^\eta)\leq 2^{-\upsilon\tent},$$ 
		uniformly in $s\in[2\upsilon\tent,(1-2\upsilon)\tent]$, where we use  $r\geq \upsilon\tent$ and the bound \eqref{eq:ell}.
	\qedhere\end{enumerate}
	\end{proof}

	We are now left with showing the validity of \eqref{eq:validity}. Such a result is achieved by the following lemma, which is based on the constructions in Definitions \ref{def:construction1} and \ref{def:construction2} and on a concentration inequality.
	\begin{lemma}\label{le:chat2env}
	For every $\varepsilon>0$
		$$\lim_{n\to\infty}\P\(\forall x,y\in[n],\:\max_{s\in[2\upsilon \tent,(1-2\upsilon)\tent]}\bar Q_{s}^t(x,y)\leq (1+\varepsilon)\muin P_\eta^h(y)+\tfrac{\varepsilon}{n} \)=1.$$
	\end{lemma}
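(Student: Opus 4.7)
The plan is to decompose each nice path at the intermediate time $t-h=s+r$, reducing the statement to a pointwise upper bound on the ``forest part'' of the weight, and then to establish that bound via a martingale concentration argument that exploits the stepwise revelation of $\eta$ in Definition~\ref{def:construction2}.

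For any pair $x,z\in[n]$ let
\[
\widehat q_s(x,z)\;:=\;\sum_{\substack{\p=(v_0,\dots,v_{t-h})\\ v_0=x,\, v_{t-h}=z}}\w(\p),
\]
the sum being restricted to paths satisfying conditions (2) and (3) of Definition~\ref{def:nice}. Condition (4) forces the remaining $h$ coordinates of a nice path ending at $y$ to be the unique path from $v_{t-h}$ to $y$ in $G(\eta)$, whose weight is bounded by $P_\eta^h(v_{t-h},y)$. Dropping the weight cap in condition (1) can only enlarge the total, so
\[
\bar Q_s^t(x,y)\;\le\;\sum_{z\in[n]}\widehat q_s(x,z)\,P_\eta^h(z,y).
\]
Since $\sum_z P_\eta^h(z,y)\le n$, the lemma follows once we show that w.h.p.\ with respect to $(\si,\eta)$,
\begin{equation}\label{eq:goalplan}
\widehat q_s(x,z)\;\le\;(1+\tfrac{\varepsilon}{2})\,\muin(z)+\frac{\varepsilon}{2n^2}
\end{equation}
uniformly in $x,z\in[n]$ and $s\in[2\upsilon\tent,(1-2\upsilon)\tent]$.

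Because $\cW_x^{\si,\eta}(r)$ consists of open edges only, every non-root vertex is reached from its root by a unique ancestral path. Hence $\widehat q_s(x,z)$ is non-zero only if some open step $\ell(z)$ of Definition~\ref{def:construction2} places $z$ into the forest at depth exactly $r$ from a root $z_0\in\cL^{x,\si}_s$, in which case $\widehat q_s(x,z)$ equals the walk weight of the unique path $x\to z_0\to\cdots\to z$, i.e.\ $\widehat q_s(x,z)=\widehat\w_{\si,\eta}(e_{\ell(z)})$. In particular $\sum_z\widehat q_s(x,z)\le 1$ and, by Lemma~\ref{le:lemma2}, $\widehat q_s(x,z)\le r/\ell(z)$. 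Let $(\cS_\ell)_\ell$ be the filtration generated by the stepwise revelation of $\eta$ in Definition~\ref{def:construction2}. Since at step $\ell$ the head $f_\ell=\eta(e_\ell)$ is uniform over the $m-\ell+1$ unrevealed heads and at most $d^-_z$ unrevealed heads sit at $z$, Lemma~\ref{le:lemma2} yields
\[
\Pj\bigl(v(f_\ell)=z,\;e_\ell\text{ open}\mid\cS_{\ell-1}\bigr)\;\le\;\frac{d^-_z}{m-\kappa_{\si,\eta}}\;=\;(1+o(1))\muin(z).
\]
Setting $\xi_\ell:=\widehat\w_{\si,\eta}(e_\ell)\,\ind(v(f_\ell)=z,\,e_\ell\text{ open},\,v(f_\ell)\text{ at depth }r)$, one has $\widehat q_s(x,z)=\sum_\ell\xi_\ell$ and
\[
\sum_\ell\E[\xi_\ell\mid\cS_{\ell-1}]\;\le\;(1+o(1))\muin(z)\sum_\ell\widehat\w_{\si,\eta}(e_\ell)\ind(\text{depth}=r,\,e_\ell\text{ open})\;\le\;(1+o(1))\muin(z),
\]
the last inequality because the middle sum is $\sum_{z'}\widehat q_s(x,z')\le 1$.

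A Bennett-type martingale inequality in the spirit of the argument used in the proof of Lemma~\ref{le:lemma1}, applied to the compensated sum $\sum_\ell(\xi_\ell-\E[\xi_\ell\mid\cS_{\ell-1}])$ with stepwise bound $\xi_\ell\le r/\ell$ and total conditional variance of order $\muin(z)\cdot O(\log^2(n)/n)$, then yields $\Pj(\widehat q_s(x,z)>(1+\varepsilon/2)\muin(z)+\varepsilon/(2n^2))=o(n^{-3})$. A union bound over $x,z\in[n]$ and over $s$ in the $O(\log n)$-sized window promotes this to \eqref{eq:goalplan}. The main obstacle is the uniformity in $z$ for which $\muin(z)$ is of order $1/n$, where the admissible additive slack shrinks to $\varepsilon/n^2$: this forces tight control over the initial range $\ell\le\bar\ell=\Theta(\log n)$ in which $\widehat\w_{\si,\eta}(e_\ell)$ may still be of constant order. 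As in the proof of Lemma~\ref{le:lemma1}, one checks that w.h.p.\ at most one edge in the first $\bar\ell$ steps is non-open, so each large early weight contributes to $\widehat q_s(x,\cdot)$ at a single random $z$ and the resulting exceptional mass is absorbed in the error term; from $\ell>\bar\ell$ onwards the martingale concentration operates in the small-weight regime.
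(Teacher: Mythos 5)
Your reduction is valid arithmetically: if one could prove
\[
\widehat q_s(x,z)\le(1+\tfrac{\varepsilon}{2})\muin(z)+\tfrac{\varepsilon}{2n^2}
\]
uniformly, then summing against $P_\eta^h(z,\cdot)$ would indeed give the lemma. The problem is that this intermediate claim is \emph{false}, and the martingale concentration you invoke cannot possibly deliver it.

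Concretely, fix $z$ and note that your $\widehat q_s(x,z)$ is non-zero precisely when $z$ is discovered as a depth-$r$ vertex of $\cW_x^{\si,\eta}(r)$, in which case it equals the path weight $\widehat\w_{\si,\eta}(e_{\ell(z)})$, which by construction satisfies $\widehat\w_{\si,\eta}(e_{\ell(z)})\ge\widehat\w_{\min}=e^{-(1+\upsilon/2)H(t-h)}=n^{-(1+\upsilon/2)(1-\upsilon)}\gg n^{-1}$. Since the exploration reveals $\kappa_{\si,\eta}=\Theta(n^{1-\upsilon^2})$ edges and the head at each step is roughly uniform, the event $\{\widehat q_s(x,z)>0\}$ has probability of order $n^{-\upsilon^2}$ — nowhere near $o(n^{-3})$. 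So
\[
\P\bigl(\widehat q_s(x,z)>(1+\tfrac{\varepsilon}{2})\muin(z)+\tfrac{\varepsilon}{2n^2}\bigr)\ge\P(\widehat q_s(x,z)\ge\widehat\w_{\min})=\Theta(n^{-\upsilon^2}),
\]
and \eqref{eq:goalplan} cannot hold even for a single pair $(x,z)$, let alone after a union bound over $n^2$ pairs. The martingale decomposition $\widehat q_s(x,z)=\sum_\ell\xi_\ell$ is also misleading: for fixed $z$ at most one of the $\xi_\ell$ can be non-zero, so $\widehat q_s(x,z)$ is effectively a single Bernoulli-type increment of size $\gg n^{-1}$, not a sum of many small terms, and Bennett/Freedman-type bounds are vacuous here.

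The root of the difficulty is the decomposition at time $t-h$ together with ``dropping the weight cap in condition (1).'' The weights $\widehat\w(e)$ on the forest side are individually large (of order $n^{-(1-\Theta(\upsilon))}$); it is only after multiplying by the in-neighborhood weight $\widehat\w'(f)\le P_\eta^h(z,y)$ and enforcing the cap $\widehat\w(e)\widehat\w'(f)\le e^{-(1-\upsilon/2)Ht}\le n^{-1-\upsilon/3}$ that each summand becomes small enough for a concentration argument to work. This is exactly what the paper's proof exploits: it treats $\bar Q_s^t(x,y)$ directly as $\sum_{e\in\cE_\cW}c(e,\eta(e))$ with $c(e,f)=\widehat\w(e)\widehat\w'(f)\ind\{\widehat\w(e)\widehat\w'(f)\le e^{-(1-\upsilon/2)Ht}\}$, conditions on both the forest $\cG_x^{\si,\eta}(r)$ and the backward in-neighborhood of $y$, and applies a Chatterjee-type concentration inequality for the random matching $\eta$ of the tails $\cE_\cW$ with the heads $\cF_y$, which is legitimate precisely because $\|c\|_\infty\le n^{-1-\upsilon/3}$. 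To make your approach work you would have to avoid splitting $\widehat\w(e)$ from $\widehat\w'(f)$ and retain the cap — at which point you have essentially reconstructed the paper's argument.
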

	\begin{proof}
		Fix $x,y\in[n]$ and $s\in[2\upsilon \tent,(1-2\upsilon)\tent]$. Generate, in this order, the configuration $\si$, the graph $\cG^{\si,\eta}_x(r)$ as in Definition \ref{def:construction2}, and the in-neighborhood of $y$ up to distance $h-1$. The latter can be constructed in the usual \emph{breadth first} way, see e.g.\ \cite[Section 5.3]{CQ:PageRank}. Let $\cS$ denote the $\sigma$-field generated by this construction. Clearly, in the construction of the in-neighborhood of $y$ we cannot reveal more than $\Delta^h=o(n)$ edges. Therefore, by Lemma \ref{le:lemma2} at most $o(n)$ edges of $\eta$ have been revealed up to this point. 
		Let $\cE_\cW$ denote the tails of the leaves of $\cW_x^{\si,\eta}(r)$ at distance $r$ from $\cL_s^{x,\si}$ and call $\cF_y$ the set of heads of the vertices $v$ in the boundary of the in-neighborhood of $y$ such that there is a unique path of length at most $h-1$ to $y$ in the configuration $\eta$. Both $\cE_\cW$ and $\cF_y$ are $\cS$-measurable, and 
		\begin{align}\label{eq:efm}
		\E[\ind_{\eta(e)=f}\:|\:\cS]=\frac{1}{m}(1+o(1)),\end{align}
		 for any $e\in\cE_\cW$, $f\in\cF_y$.
		Associate to each head $f\in\cF_y$ the weight $$\widehat\w'(f)=P^{h-1}_\eta(v(f),y).$$ At this point we notice that by definition of nice paths, 
			\begin{align*}
	\bar Q^t_s(x,y)=&\sum_{e\in\cE_\cW}\widehat{\w}_{\si,\eta}(e)\sum_{f\in \cF_y}\widehat\w'(f)\ind_{\widehat{\w}(e)\widehat{\w}'(f)\le e^{-(1-\upsilon/2)Ht}}\ind_{\eta(e)=f}.
	\end{align*}
	We remark that 
	\begin{align}\label{eq:efm1}
	\frac1m\sum_{f\in \cF_y}\widehat\w'(f)\le \muin P_\eta^h(y),\,\qquad \sum_{e\in\cE_\cW}\widehat{\w}_{\si,\eta}(e)\le 1.\end{align}
Since a matching $\eta(e)=f$ of $e\in \cE_\cW$ and $f\in\cF_y$ can only occur after the generation of $\si,\cG^{\si,\eta}_x(r),\cF_y$, \eqref{eq:efm} and \eqref{eq:efm1} show that
	$$\E[\bar Q^t_s(x,y)\:|\:\cS]\le \muin P_\eta^h(y).$$
	We rewrite
	$Z:=\bar Q^t_s(x,y)=\sum_{e\in\cE_\cW}c(e,\eta(e)),$
	where
	$$c(e,f)=\widehat{\w}_{\si,\eta}(e)\widehat\w'(f)\ind_{\widehat{\w}(e)\widehat{\w}'(f)\le e^{-(1-\upsilon/2)Ht}}.$$
	Here we observe that we can take $\upsilon$ such  that $(1-\upsilon/2)(1+\upsilon)\geq 1+\upsilon/3$ and therefore
	\begin{align}\label{eq:efm2}
	\|c\|_\infty=\max_{e,f}c(e,f)\leq e^{-(1-\upsilon/2)Ht}\le n^{-1-\upsilon/3}.
	\end{align}	
	We can now invoke the concentration inequality  (see \cite[Proposition 1.1]{chatterjee2007stein} and \cite[Section 6.2]{BCS1})
	$$\P\(Z-\E[Z|\cS]\ge a\:|\:\cS \)\le \exp\(-\frac{a^2}{2\| c\|_\infty(2\E[Z|\cS]+a)} \). $$
	Choosing $a:=\frac{\varepsilon}{2}\E[Z|\cS]+\frac{\varepsilon}{n}$, \eqref{eq:efm2} shows that this  probability is bounded by $o(n^{-3})$ for every fixed choice of $x,y$ and ${s\in[2\upsilon \tent,(1-2\upsilon)\tent]}$.  
	 Taking a union bound we conclude the desired result.
	\end{proof}

 	
 	\bigskip

 	\subsection*{Acknowledgments}
 	We acknowledge support of PRIN 2015 5PAWZB ``Large Scale Random Structures", and of INdAM-GNAMPA Project 2019 ``Markov chains and games on networks''.
 	\bigskip
 	
 	\bibliographystyle{plain}
 	 \bibliography{refresh200920.bib}
 	
 \end{document}